\theoremstyle{plain}
\newtheorem{lem}{Lemma}[section]
\newtheorem{prop}[lem]{Proposition}
\newtheorem{cor}[lem]{Corollary}
\newtheorem{thm}[lem]{Theorem}
\theoremstyle{definition}
\newtheorem{defi}[lem]{Definition}
\newtheorem{rem}[lem]{Remark}
\newtheorem{ex}[lem]{Example}
\newtheorem{conj}[lem]{Conjecture}
\crefname{equation}{}{}
\newcommand{\mfsl}{\mathfrak{sl}_n(\C)}
\newcommand{\mfg}{\mathfrak{g}}
\newcommand{\Z}{\mathbb{Z}}
\newcommand{\N}{\mathbb{N}}
\newcommand{\C}{\mathbb{C}}
\DeclareMathOperator{\Br}{Br}
\DeclareMathOperator{\SSym}{SSym}
\DeclareMathOperator{\Res}{Res}
\DeclareMathOperator{\End}{End}
\begin{document}
\title[On the center of the BMW algebras]{The center of the BMW algebras and an Okounkov-Vershik like approach.}
\setlength{\headheight}{14.5pt}

\author{\rm Christoforos Milionis}
\address{C.A.M.: School of Mathematics, Statistics and Physics, Newcastle University}
\email{C.A.Milionis2@ncl.ac.uk}

\begin{abstract}
    We use the Jucys-Murphy elements of the Birman-Murakami-Wenzl algebras $B_n(q,t)$, to show that its center over the field of complex numbers for almost all parameters making it semisimple is given by Wheel Laurent polynomials, a subalgebra of the symmetric Laurent polynomials in the JM elements. As an application, we give an Okounkov-Vershik like approach to its finite dimensional representations. In the non-semisimple case related to the type B Lie algebras, the central subalgebra of Wheel Laurent polynomials is large enough to separate blocks of the BMW algebras.
\end{abstract}

\maketitle

\section{Introduction}
\subsection*{Motivation}
\par Let $U(\mfg)=U(\mfsl)$ be the universal enveloping algebra of the simple Lie algebra of type $A_{n-1}$ with its usual triangular decomposition $\mfg=\mathfrak{n}^-\oplus \mathfrak{h}\oplus \mathfrak{n}^+$ and $V=L(\Box)$ be its vector representation, the simple module corresponding to the first fundamental weight. 

\par Since $U(\mfg)$ is a Hopf algebra, the $r$-th fold tensor product $V^{\otimes r}$ is a module over $U(\mfg)$. All polynomial representations of $\mfsl$ appear as direct summands of $V^{\otimes r}$, for $r$ high enough.
\par Hence, the importance of understanding $\End_{U{(\mfg)}}(V^{\otimes r})$ becomes apparent. There is a natural right action of $\C S_r$ on $V^{\otimes r}$ by permuting the tensor factors. We are thus in the following situation:
\begin{equation*}
    U(\mfg)\curvearrowright V^{\otimes r} \curvearrowleft \C S_r.
\end{equation*}
\par Classical Schur-Weyl duality \cite{MR255} shows that the images of these two actions are centralizers of one another. Moreover, decomposing $V^{\otimes r}$ as a $(U(\mfg),\C S_r)$-bimodule allows one to express the irreducible representations of $S_r$ as multiplicity spaces of the corresponding irreducible representation of $\mfsl$ in $V^{\otimes r}$. Since we can do this for any $r\in \N$, there is an interplay between the representations of the chain of algebras 
\[
\C\subset \C S_2 \subset \ldots\subset \C S_{r-1} \subset \C S_r\subset \ldots
\]
and the representations of the Lie algebra $\mfsl$.
\par Since the representations of $\C S_r$ can be understood via $\mfsl$-theory, could we create an approach to the finite dimensional representations of $S_r$ that resembles the highest weight theory approach for Lie algebras?
\par This approach was realized by Okounkov and Vershik in \cite{OV}, where the authors studied the representation theory of the natural chain above simultaneously, building the irreducible representations of $S_n$ by seeing how they decompose when regarded as $S_{n-1}$ representations. 
Their approach utilizes a remarkable family of commuting elements that are compatible with the chain, called Jucys-Murphy elements. These are recursively defined by $x_1=0$ and 
\[
x_{k+1}=s_kx_ks_k+s_k\in \C S_{k+1}
\]
for any $k\geq 1$, where $s_k=(k \; k+1)$ denotes the simple transposition of $S_{k+1}$. It should be noted that $x_k$ is not central, but it centralizes $\C S_{k-1}\subseteq \C S_{k}$ and the algebra they generate $\C[x_1,\ldots,x_k]\subseteq \C S_k$ is a maximal commutative subalgebra, analogous to the Cartan subalgebra in Lie theory. 
An important factor in their approach was the fact that this chain is multiplicity free, meaning that any irreducible representation of $S_n$ regarded as an $S_{n-1}$ representation decomposes in such a way that the multiplicities of the irreducible $S_{n-1}$ representations appearing in the decomposition are equal to one. One then records these so called branching rules in a graph called the branching graph. From this graph, one may construct a unique (up to scalars) basis indexed by paths in the branching graph for every irreducible $S_n$ representation. This basis is called the Gelfand-Zetlin basis, or GZ basis.
\par Jucys in \cite{JucysCenterCSn} used the JM elements to describe the center of the group algebra $Z(\C S_k)$ as the algebra of symmetric polynomials in the JM elements. That is,
\[
Z(\C S_k)=\C[x_1,\ldots,x_k]^{S_k}
\]
Similar results have been generalized to other types.
\par Replacing $\mfg=\mfsl$ by $\mathfrak{so}(2n+1),\mathfrak{sp}(2n),\mathfrak{so}(2n)$, the centralizer algebra is replaced by a one parameter family of algebras called the Brauer algebras, denoted by $\Br_r(\delta)$ for $\delta\in \Z$ depending on the Lie algebra $\mathfrak{g}$, see \cite{Brauer}. One sees that the relations defining $\Br_r(\delta)$ depend polynomially on $\delta$, hence these can be defined for $\delta\in \C$. These algebras fit again in a natural multiplicity free chain. Analogues of the JM elements were first introduced by Nazarov in \cite{Naz}, where the properties of these elements were extensively studied. Nazarov also determined the center in the semisimple cases to be equal to the subalgebra of symmetric polynomials satisfying an additive cancellation condition evaluated at the JM elements. To be exact, he proved that for $\delta \in \Z_{\geq 0}$ odd or large enough even,
\[
Z(\Br_n(\delta))=\{f\in \C[x_1,\ldots,x_n]^{S_n}: \;f(x_1,-x_1,\ldots,x_n)=f(0,0,x_3,\ldots,x_n)\}.
\]
\par For $\mfg=\mfsl$, if one replaces $V^{\otimes r}$ with $V^{\otimes r}\otimes {(V^{*})}^{\otimes s}$, where $V^{*}$ is the dual representation of $V$, then the centralizer algebra is replaced by a subalgebra of the Brauer algebra called the walled Brauer algebras $B_{r,s}(\delta)$ with $\delta=n$, see for example \cite{BGCMHT}. Even though these algebras do not fit in just one natural mutliplicity free chain, Jung and Kim in \cite{JungKim} used a specific multiplicity free chain and defined analogues of the JM elements for $B_{r,s}(\delta)$, allowing them to describe the center in terms of JM elements.
In particular, they proved that the center is equal to the algebra of super-symmetric polynomials in the JM elements $x_1,\ldots,x_r,y_1,\ldots,y_s$. That is,
\[
Z(B_{r,s}(\delta))=\C[x_1,\ldots,x_r,y_1,\ldots,y_s]^{\SSym}.
\]
\par Of particular interest is the case of the partition algebras $P_{2n}(t)$, first defined by Martin in \cite{MR1265453}. These algebras also admit JM elements as defined in \cite{HaRamPartition}. Recently, Creedon in \cite{CRE} proved that in the semisimple case, the center of the partition algebras is described by the algebra of supersymmetric polynomials in the JM elements,
\[
Z(P_{2n}(t))=\C[x_1,\ldots,x_n,y_1,\ldots,y_n]^{\SSym}.
\]
\par One can address the same questions for quantum groups. For $U_q(\mfsl)$ and $V=L(\omega_1)$ the quantized natural representation, the centralizer algebra of $V^{\otimes r}$ is described by a one parameter deformation of the group algebra of $S_n$, the Hecke algebras $H_n(q)$ while for mixed tensors the centralizer is described by the so called quantized walled Brauer algebras $\mathcal{B}_{r,s}(q,\delta)$.
\subsection*{The BMW algebras}
\par Now if $\mfg\in\{\mathfrak{so}(2n+1),\mathfrak{sp}(2n),\mathfrak{so}(2n)\}$ and $V$ is the quantized vector representation, then the centralizer algebras of the $r-$fold tensor products are described by the Birman-Murakami-Wenzl algebras (BMW for short), $B_n(q,t)$, first introduced in \cite{BirmanWenzl}, see also \cite{WEN}, \cite{MR927059},\cite{LZ}. These algebras are two parameter deformations of the Brauer algebras. The representation theory of this algebras has been extensively studied, see for example \cite{Xi},\cite{RSi1},\cite{RSi2},\cite{RS}. These algebras have a diagrammatic interpretation and also fit in a natural chain 
\[
\C=B_1(q,t)\subset B_2(q,t)\subset \ldots \subset B_n(q,t)\subset \ldots
\]
They are also equipped with a family of JM elements $x_1,\ldots,x_n$ compatible with the above chain, defined by a recursive relation. Hence, they fit into the framework described in all the previous cases above.
\par In this paper, we give a result describing the center of $B_n(q,t)$, which is comparable to the previous cases. One of the main results of this paper is \cref{THM1center}, where we describe all cases in which the center is described by the multiplicative analogue of the algebra describing the center of the Brauer algebras $\Br_n(\delta)$, for $q\in \C$ not a root of unity. That is, we let 
\[
C=\{f\in \C[X_1^{\pm 1},\ldots,X_n^{\pm 1}]^{S_n}: \; f(X_1,X_1^{-1},\ldots,X_n)=f(1,1,\ldots,X_n)\}
\]
and call it the algebra of Wheel Laurent polynomials. Our main motivation came from \cite{DRV}, where the authors computed the center of the affine BMW algebras $aB_n(q,t)$ to be equal to the algebra $C$ in the affine generators $x_1,\ldots,x_n$. We note that their proof is completely algebraic and elementary, using only a basis theorem for $aB_n(q,t)$. 
\subsection*{Main Results}
\par We state the main results of this paper. Let $q,t\in \C$ be such that $q$ is not a root of unity and $WL[x_1,\ldots,x_n]$ be the subalgebra of $B_n(q,t)$, consisting of Wheel Laurent polynomials in the JM elements of $B_n(q,t)$. The first theorem gives a characterization of the center.
\begin{thm}\label{THM1center}
    For $q\in \C$ not a root of unity, the following holds:
    \begin{equation*}
        Z(B_n(q,t))=WL[x_1,\ldots,x_n],
    \end{equation*}
    if and only if we are in one of the following situations:
    \begin{itemize}
        \item $t\neq \pm q^a$, for $a\in \Z$,
        \item $t=\pm q^{2a}$ or $t=\pm q^{2a-1}$, for $\lvert a\rvert \geq n$,
        \item $t=\pm q^{2a}$, with $\lvert a\rvert < n$ such that $B_n(q,t)$ is semisimple.
    \end{itemize}
    In particular, when $t=\pm q^{2a-1}$ with $\lvert a\rvert<n$, it holds that
    \begin{equation*}
        \dim_{\C}WL[x_1,\ldots,x_n]<\dim_{\C}Z(B_n(q,t)),
    \end{equation*}
    with the exception of $n=3$ and $t=q,q^{-1},-q$.
\end{thm}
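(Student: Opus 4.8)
The plan is to prove both directions of the equivalence, together with the dimension strict inequality, by leveraging the representation theory of $B_n(q,t)$ in the (almost always) semisimple regime and the behaviour of JM elements there. Throughout one works over $\C$ with $q$ not a root of unity, so that for generic $t$ the algebra $B_n(q,t)$ is split semisimple with simple modules $V^\lambda$ indexed by the usual set of partitions appearing at level $n$ (those obtained by adding/removing boxes, allowing the ``empty'' cup–cap strands). The key structural fact, to be recalled or proved beforehand, is that the JM elements $x_1,\dots,x_n$ act on the GZ-type basis of each $V^\lambda$ by explicit scalars (``contents'' in the BMW sense, which are Laurent monomials in $q$ times powers of $t$ for the cup/cap steps), and that these joint eigenvalues separate the simple modules. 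Hence a symmetric Laurent polynomial $f(X_1,\dots,X_n)$ evaluated at the JM elements is central and acts on $V^\lambda$ as the scalar $f$ evaluated at the content sequence; so $WL[x_1,\dots,x_n]\subseteq Z(B_n(q,t))$ always, and the question becomes exactly whether these two subalgebras have the same dimension, i.e. whether the Wheel Laurent condition $f(X_1,X_1^{-1},\dots)=f(1,1,\dots)$ is precisely the constraint forced on a symmetric Laurent polynomial by the requirement that it take a well-defined value on each content sequence.

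The forward implication is the main content. In the semisimple case $\dim Z(B_n(q,t))$ equals the number of simple modules, so the statement $Z=WL[x_1,\dots,x_n]$ is equivalent to: the evaluation map $C\to\prod_\lambda \C$, $f\mapsto (f(\text{content sequence of }\lambda))_\lambda$, is surjective (injectivity is automatic since a central element is determined by its scalars, and the map factors through $WL$). First I would show this surjectivity holds whenever the content sequences attached to distinct $\lambda$ are ``distinguishable by Wheel Laurent polynomials'', and identify the obstruction: two partitions $\lambda\neq\mu$ whose multisets $\{X_i\text{-contents}\}$ become equal after imposing the relation $X_i X_{i+1}=1$ on a cup step would be forced to receive the same value from every $f\in C$, giving a central idempotent not in $WL$. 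The parameter trichotomy in the statement is exactly the classification of when such a collision can or cannot happen: for $t$ not a power of $q$, or $t=\pm q^{\pm \text{odd}}$, or $\pm q^{\text{even}}$ with $|a|\ge n$, the content sequences of the $B_n$-partitions are separated by $C$ (because the ``cup'' contents $t^{\pm1}q^{\bullet}$ never coincide with ``box'' contents $q^{\bullet}$ in a way that produces an unwanted identification), whereas for $t=\pm q^{2a}$ with $|a|<n$ outside the semisimple locus the very notion of ``number of simples'' changes and one instead invokes the block-separation statement alluded to in the abstract. I would organize this as: (i) reduce to a combinatorial statement about content multisets; (ii) for each of the three listed parameter families verify separation by constructing, for any pair $\lambda\neq\mu$, an explicit $f\in C$ (a power sum or elementary-symmetric-type Wheel Laurent polynomial) distinguishing them; (iii) conversely, in the semisimple case for $t=\pm q^{2a-1}$ (and $(n,t)$ not the small exceptions), exhibit an explicit pair $\lambda\neq\mu$ with equal $C$-values, yielding the strict inequality.

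The dimension inequality $\dim_\C WL[x_1,\dots,x_n]<\dim_\C Z(B_n(q,t))$ when $t=\pm q^{2a-1}$ is then a corollary of step (iii): since $WL[x_1,\dots,x_n]$ is the image of $C$ under evaluation, its dimension is the number of distinct $C$-orbits of content sequences, while $\dim Z$ is the (larger) number of blocks/simples; producing even one collision gives the strict inequality. The small exceptions $n=3$, $t\in\{q,q^{-1},-q\}$ must be handled by direct computation: list the at most a handful of $B_3$-partitions, write out their content triples, and check by hand that no collision occurs, so that equality holds there after all. The hard part is step (ii)–(iii): turning the rather delicate arithmetic of BMW contents (which mix $q$-powers from box additions with $t^{\pm1}q^{\bullet}$ from cup/cap steps) into a clean separation/collision criterion, and making sure the case analysis on $\pm q^{\text{even}}$ vs.\ $\pm q^{\text{odd}}$ vs.\ generic exhausts all possibilities; in particular the ``$|a|\ge n$'' threshold has to be pinned down by showing that when $|a|$ is large the cup contents are simply too far from the range of box contents occurring at level $n$ to ever collide, which is where the bound on $n$ enters. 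Once the combinatorial separation statement is proved, both the iff and the strict inequality follow formally from the semisimple representation theory recalled above, together with the non-semisimple block-separation input for the third bullet.
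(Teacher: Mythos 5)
Your overall architecture matches the paper's: use semisimplicity and Artin--Wedderburn to reduce to showing that Wheel Laurent polynomials evaluated at the JM contents separate the simple modules, then exhibit collisions for small odd powers and check the $n=3$ exceptions by hand. However, there are two genuine gaps. First, your justification of the inclusion $WL[x_1,\ldots,x_n]\subseteq Z(B_n(q,t))$ is wrong as stated: a symmetric Laurent polynomial in the JM elements is \emph{not} central in general (e.g.\ $\sum_i x_i$ takes different values on different updown tableaux of the same shape, since a cup step contributes $tq^{2i}+t^{-1}q^{-2i}$, which depends on the diagonal), and even for a wheel polynomial the argument ``acts diagonally on the GZ basis, hence central'' requires knowing that the scalar is independent of the path $T\in T^{\operatorname{ud}}_n(\lambda)$ --- which is precisely the nontrivial point you do not establish. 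The paper avoids this circularity entirely: centrality is obtained by projecting the center of the affine BMW algebra (computed in \cite{DRV}) along the surjection $\pi_n:aB_n(q,t)\twoheadrightarrow B_n(q,t)$ sending the affine generators to the JM elements, and only \emph{then} is path-independence deduced from Schur's lemma (\cref{independentofpath}). You would either need to cite that input or give a direct combinatorial proof of path-independence, which is not routine.

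Second, the third bullet --- $t=\pm q^{2a}$ with $\lfloor n/2\rfloor-1\le\lvert a\rvert\le n-1$ and $B_n(q,t)$ semisimple --- is not addressed in your plan; you slide from the high-power cases directly to the non-semisimple even case and invoke block separation, which is irrelevant for proving $Z=WL$ in the semisimple range (and in the non-semisimple even range the paper does not prove failure of equality at all; it conjectures equality). This omitted case is exactly where the delicate combinatorics live: the generating function $W(\lambda,t)=\prod_{i\in D(\lambda)}(1-t^{-1}q^{-2i}T)^{m_\lambda(i)}/\prod_{i\in D(\lambda)}(1-tq^{2i}T)^{m_\lambda(i)}$ is no longer reduced, and the paper's argument (\cref{smallevenpowers}) shows that the $(q,t)$-paired diagonals occur only in the first column, each with multiplicity one, so that after cancellation the reduced form of $W(\lambda,t)$ still determines $\lambda$. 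Without an argument of this kind your separation step (ii) does not cover the third bullet, and the ``if and only if'' cannot be concluded. The remaining ingredients of your plan (the separation-implies-surjectivity step, which the paper formalizes as the point-separation \cref{separationlemma}, and the explicit collisions such as $W((n-2,2))=W((n-2))$ and $W((n-a,1^a))=W((n-a,1^{a-2}))$ for odd powers) are consistent with the paper, but are stated only as intentions and would need to be carried out.
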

\par The main ideas of the proof are the following. As a first step, one needs to show that 
\begin{equation*}
    WL[x_1,\ldots,x_n]\leq Z(B_n(q,t)).
\end{equation*}
 This is very easy in our case, as we can exploit the following observation. The affine BMW algebras $aB_n(q,t)$ are essentially an enlargement of $B_n(q,t)$ by freeing the JM elements as affine generators. This translates to having a surjective algebra morphism
\[
aB_n(q,t)\twoheadrightarrow B_n(q,t)
\]
such that the affine generators are sent to the JM elements of $B_n(q,t)$. This morphism restricts to an algebra morphism between the centers. Since the center of $aB_n(q,t)$ was computed in \cite{DRV} to be equal to $C$, and this algebra projects to $WL[x_1,\ldots,x_n]$, the first step is complete.
\par Since we are dealing with the cases where $B_n(q,t)$ is semisimple, the Artin-Wedderburn theorem gives us that the dimension of $Z(B_n(q,t))$ is equal to the number of simple $B_n(q,t)$-modules. Additionally, by Schur's lemma, every central element acts on simple modules by a scalar. Hence, we have to show that the algebra $WL[x_1,\ldots,x_n]$ provides the right number of linearly independent elements, by acting on different simple modules by different scalars. In the case of (super)symmetric polynomials, one employs the following strategy. A generating set is given by the elementary (super)symmetric polynomials, a family of polynomials whose definition is given by a generating function. This generating function compactly holds the information of the entire family of generators. One uses the generating function to separate simples, and then uses a point separation argument to conclude that the desired algebra has the correct dimension.
\par In the case of Wheel Laurent polynomials, we define an analogue to the family of elementary (super)symmetric polynomials (see \cref{elemwheel}) and use the technique described above to show that 
\[
\dim_\C WL[x_1,\ldots,x_n]\geq \dim_{\C}Z(B_n(q,t))
\]
which allows us to conclude.
\par We note that interesting combinatorics start emerging when 

\[t=q^{2a} \quad \text{with} \quad a\in \{\lfloor\frac{n}{2}\rfloor-1,\ldots,n-1\},
\]
see \cref{smallevenpowers}.
\par The algebras $B_n(q,t)$ are cellular, see \cite{GL}, \cite{Xi}. In \cite{EnyangMurphyBasisold}, Enyang proved that there is a Murphy basis for every cell module of $B_n(q,t)$. In the cases where $B_n(q,t)$ is semisimple, we use this result to find the existence of a simultaneous eigenvector basis of every simple $B_n(q,t)$ module with respect to the action of the JM elements $x_1,\ldots,x_n$. We also call this basis the Murphy basis. One naturally asks if this basis is the same as the GZ-basis. Denote the Gelfand-Zetlin algebra of $B_n(q,t)$ by $GZ(n)$ (see \cref{GZalgebra}).
\begin{thm}
It holds that
\begin{equation*}
    GZ(n)=\C[x_1^{\pm 1},\ldots,x_n^{\pm 1}],
\end{equation*}
if and only if $q,t\in \C$ are as in \cref{THM1center}.
\par In these cases, the Murphy basis is equal (up to scalars) to the GZ basis, for every simple module of $B_n(q,t)$.
\end{thm}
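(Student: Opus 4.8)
The plan is to reduce the whole statement to a question about the individual centers $Z(B_k(q,t))$ for $k\le n$, exploiting that, by definition, $GZ(n)$ is the subalgebra of $B_n(q,t)$ generated by $Z(B_1(q,t)),\ldots,Z(B_n(q,t))$. The key preliminary observation is that one inclusion is automatic for every $(q,t)$ with $q$ not a root of unity: since $(X_1\cdots X_k)^{\pm 1}$ is a Wheel Laurent polynomial, the product of Jucys--Murphy elements $x_1\cdots x_k$ and its inverse both lie in $WL[x_1,\ldots,x_k]\subseteq Z(B_k(q,t))\subseteq GZ(n)$, the inclusion $WL[x_1,\ldots,x_k]\subseteq Z(B_k(q,t))$ holding unconditionally by the easy first step in the proof of \cref{THM1center}. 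Writing $\theta_k:=x_1\cdots x_k$, one has $x_1=\theta_1$, $x_k=\theta_{k-1}^{-1}\theta_k$ and $x_k^{-1}=\theta_k^{-1}\theta_{k-1}$ for $k\ge 2$ (with all $\theta_j^{\pm 1}\in GZ(n)$), so $\C[x_1^{\pm 1},\ldots,x_n^{\pm 1}]\subseteq GZ(n)$ always. Hence $GZ(n)=\C[x_1^{\pm 1},\ldots,x_n^{\pm 1}]$ if and only if $GZ(n)\subseteq \C[x_1^{\pm 1},\ldots,x_n^{\pm 1}]$, equivalently $Z(B_k(q,t))\subseteq \C[x_1^{\pm 1},\ldots,x_n^{\pm 1}]$ for every $k\le n$.

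For the ``if'' direction I would first check that the parameter conditions in \cref{THM1center} propagate down the tower: if $B_n(q,t)$ is semisimple then so is every $B_k(q,t)$ with $k\le n$, and each of the three conditions on $(q,t)$ for $n$ implies one of them for $k$ (for instance $\lvert a\rvert\ge n$ implies $\lvert a\rvert\ge k$, and if $\lvert a\rvert<n$ but $k\le\lvert a\rvert$ one lands in the second case at level $k$). Then \cref{THM1center} applies at every level $k\le n$, so $Z(B_k(q,t))=WL[x_1,\ldots,x_k]\subseteq\C[x_1^{\pm 1},\ldots,x_k^{\pm 1}]$, and the reduction above gives $GZ(n)=\C[x_1^{\pm 1},\ldots,x_n^{\pm 1}]$.

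I expect the ``only if'' direction to be the main obstacle. If $(q,t)$ is not among the listed cases, then \cref{THM1center} (applied at the smallest level $k$ at which its hypotheses fail) yields $WL[x_1,\ldots,x_k]\subsetneq Z(B_k(q,t))$; but this strict inequality of dimensions is not in itself sufficient, because a priori the extra central elements could still be Laurent polynomials in the Jucys--Murphy elements. What must actually be shown is that $Z(B_k(q,t))\not\subseteq\C[x_1^{\pm 1},\ldots,x_k^{\pm 1}]$ in these regimes, i.e.\ that a genuinely new central element appears; I would extract this from the structural analysis underlying \cref{THM1center} --- the block structure of $B_k(q,t)$ in the non-semisimple type B parameters, and the explicit mechanism producing the jump $\dim_\C Z(B_k(q,t))>\dim_\C WL[x_1,\ldots,x_k]$ --- and then, using $Z(B_k(q,t))\subseteq GZ(n)$, conclude $GZ(n)\ne\C[x_1^{\pm 1},\ldots,x_n^{\pm 1}]$.

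Finally, for the statement on bases, assume $(q,t)$ is as in \cref{THM1center}, so $GZ(n)=\C[x_1^{\pm 1},\ldots,x_n^{\pm 1}]$ by the above and $B_n(q,t)$ is semisimple. The chain $B_1(q,t)\subseteq\cdots\subseteq B_n(q,t)$ is multiplicity free, so by the Okounkov--Vershik mechanism $GZ(n)$ acts on each simple module with one-dimensional joint eigenspaces, and the resulting common eigenbasis --- the GZ basis --- is unique up to rescaling each vector. Since $GZ(n)=\C[x_1^{\pm 1},\ldots,x_n^{\pm 1}]$, a common eigenbasis for $GZ(n)$ is exactly a common eigenbasis for the action of $x_1,\ldots,x_n$, and Enyang's Murphy basis provides precisely such a basis on every simple $B_n(q,t)$-module; by the uniqueness up to scalars, the two bases agree up to scalars.
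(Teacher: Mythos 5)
Your ``if'' direction and the identification of the Murphy basis with the GZ basis are correct and essentially follow the paper's route: the paper proves exactly this by noting that $GZ(n)$ is generated by the centers $Z(B_k(q,t))$, that each of these equals $WL[x_1,\ldots,x_k]$ once the center theorem is known at every level $k\le n$, and that the JM elements and their inverses lie in $GZ(n)$; the basis statement then follows because GZ basis vectors are determined up to scalars by their $GZ(n)$-eigenvalues and the Murphy basis diagonalizes $x_1,\ldots,x_n$. Two remarks on your version of this half. Your observation that $\theta_k=x_1\cdots x_k$ is a wheel polynomial, so that $x_k^{\pm1}=\theta_{k-1}^{\mp1}\theta_k^{\pm1}\in GZ(n)$ for \emph{all} parameters, is a nice improvement: the paper obtains $x_k^{\pm1}\in GZ(n)$ via the centralizer lemma $C(B_k,B_{k-1})\subseteq GZ(k)$, whose proof uses semisimple multiplicity-free branching, whereas centrality of $x_1\cdots x_k$ in $B_k(q,t)$ holds over $A$. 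On the other hand, your blanket claim that semisimplicity of $B_n(q,t)$ forces semisimplicity of all $B_k(q,t)$, $k\le n$, is false as stated: by Rui--Si, $B_5(q,q^{-1})$ is semisimple for generic $q$ while $B_4(q,q^{-1})$ is not. It is true in the cases where you invoke it (generic $t$, $|a|\ge n$, and $t=\pm q^{2a}$, which is never in $\{q^{-1},-q\}$ when $q$ is not a root of unity), but the justification needs this restriction.

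The genuine gap is the ``only if'' direction, which you explicitly leave as a programme (``I would extract this from the structural analysis underlying \cref{THM1center}''). Two problems. First, the target you state, $Z(B_k(q,t))\not\subseteq\C[x_1^{\pm1},\ldots,x_k^{\pm1}]$, is not literally what is needed: since $GZ(n)\supseteq\C[x_1^{\pm1},\ldots,x_n^{\pm1}]$, failure of equality requires some element of some $Z(B_k)$ to lie outside $\C[x_1^{\pm1},\ldots,x_n^{\pm1}]$ (all $n$ variables), and a priori $\C[x_1^{\pm1},\ldots,x_n^{\pm1}]\cap B_k$ could be larger than $\C[x_1^{\pm1},\ldots,x_k^{\pm1}]$. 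Second, and more importantly, the mechanism that actually makes the only-if work in the semisimple excluded regimes is never identified: there, distinct simple modules admit updown tableaux with \emph{identical} JM content vectors --- e.g.\ for $n=2$, $t=-q^{-1}$ (or $t=q$) the unique paths to $L((2))$ (resp.\ $L((1,1))$) and to $L(\emptyset)$ both have eigenvalue vector $(t,t^{-1})$ --- so every Laurent polynomial in $x_1,\ldots,x_n$ takes equal values on the corresponding Murphy/GZ vectors, forcing $\dim_\C\C[x_1^{\pm1},\ldots,x_n^{\pm1}]$ to be strictly less than $\dim_\C GZ(n)$, which equals the total number of paths because $GZ(n)$ is the full algebra of operators diagonal in the GZ bases. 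The jump $\dim WL<\dim Z$ that you propose to use does not by itself yield this, as you yourself note, and the appeal to block structure in the non-semisimple parameters is only a pointer, not an argument. As it stands, the equivalence claimed in the statement is not established by your proposal.
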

\par Now suppose $t=q^{2a}$ such that $B_n(q,t)$ is not semisimple. The proof that $WL[x_1,\ldots,x_n]\leq Z(B_n(q,t))$ is independent of the semisimplicity of $B_n(q,t)$, hence it holds for any $q,t\in \C$.We can show that the algebra $WL[x_1,\ldots,x_n]$ is still large enough to recognize the block structure of $B_n(q,t)$. The blocks of $B_n(q,t)$ were studied in \cite{RSi2}, where the authors gave combinatorial conditions describing when two simple modules corresponding to two partitions $(\lambda,f),(\mu,l)$ lie in the same block (see \cref{admissibility condition}). Our next result yields an equivalent characterization for blocks.
\begin{thm}
    Let $t=q^{2a}$ for $a\in \Z$. Two simple $B_n(q,t)$-modules $L(\lambda,f),L(\mu,l)$ lie in the same block if and only if 
    \begin{equation*}
            W(\lambda,t)=W(\mu,t).
    \end{equation*}    
\end{thm}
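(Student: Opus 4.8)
The plan is to treat the two implications separately; the forward (``only if'') direction is essentially formal, while the backward (``if'') direction carries all of the combinatorial content. For ``only if'', recall from the discussion preceding the theorem that $WL[x_1,\ldots,x_n]\leq Z(B_n(q,t))$ for all $q,t\in\C$, independently of semisimplicity. Thus for any Wheel Laurent polynomial $f$ the element $f(x_1,\ldots,x_n)$ is central, hence by Schur's lemma (over the algebraically closed field $\C$) acts by a scalar on each simple module. Now one invokes the elementary fact that a central element of a finite-dimensional $\C$-algebra acts by the \emph{same} scalar on all composition factors of a fixed indecomposable projective module — its action there lies in a local endomorphism ring, hence is a scalar plus a nilpotent — and therefore by the same scalar on all simple modules lying in a common block. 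Consequently, if $L(\lambda,f)$ and $L(\mu,l)$ lie in the same block, every Wheel Laurent polynomial in the Jucys--Murphy elements acts on them by equal scalars; since $W(\lambda,t)$ is, by its construction, precisely the datum recording these scalars — equivalently the wheel-reduction of the multiset of Jucys--Murphy (generalized) eigenvalues attached to $(\lambda,f)$ — this yields $W(\lambda,t)=W(\mu,t)$.

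For ``if'', I would start from the classification of the blocks of $B_n(q,q^{2a})$ due to Rui and Si \cite{RSi2}, recalled here as \cref{admissibility condition}: $L(\lambda,f)$ and $L(\mu,l)$ lie in the same block exactly when $(\lambda,f)$ and $(\mu,l)$ satisfy the stated admissibility condition on their box configurations. The goal is to show that $W(\lambda,t)=W(\mu,t)$ is equivalent to this condition. Concretely, one writes out the multiset of Jucys--Murphy eigenvalues on the cell module indexed by $(\lambda,f)$ — the contents $q^{2c(b)}$ over the boxes $b$ of $\lambda$ together with the further $f$ eigenvalues contributed by the arcs, of the shape $t^{\pm1}q^{2d}$, all specialized at $t=q^{2a}$ — performs the wheel reduction (deletion of inverse pairs $\{z,z^{-1}\}$) to obtain $W(\lambda,t)$, and checks that two such reductions agree if and only if the content data of $(\lambda,f)$ and $(\mu,l)$ differ by exactly the box moves permitted by \cref{admissibility condition}. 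Enyang's Murphy basis \cite{EnyangMurphyBasisold} is what legitimizes this eigenvalue bookkeeping in the non-semisimple regime, since the Jucys--Murphy elements act triangularly on it with the prescribed diagonal entries. A clean alternative route is to prove instead that the number of distinct values of $W(\cdot,t)$ equals the number of blocks, which together with the ``only if'' direction forces the induced map from blocks to $W$-values to be a bijection.

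The main obstacle is precisely this equivalence of combinatorial descriptions: matching the generating-function / ``elementary Wheel polynomial'' packaging of $W(\lambda,t)$ (see \cref{elemwheel}) with the addable/removable-box admissibility conditions of \cite{RSi2}, which are stated in a rather different combinatorial register. I expect a uniform argument to handle the ``generic'' non-semisimple values of $a$, while the boundary values $t=q^{2a}$ with $a\in\{\lfloor n/2\rfloor-1,\ldots,n-1\}$ singled out in \cref{smallevenpowers} — where the wheel relation forces additional coincidences among the Jucys--Murphy eigenvalues — will require separate, more delicate casework. A final subtlety to settle is that $W(\lambda,t)$ is a complete invariant for the action of $WL[x_1,\ldots,x_n]$ on the \emph{simple} modules and not merely on the cell modules: this holds because a Laurent polynomial in commuting operators that acts by a scalar is determined by the generalized-eigenvalue multiset, and the generalized eigenvalues of $x_1,\ldots,x_n$ on $L(\lambda,f)$ form a sub-multiset of those on the corresponding cell module in a way compatible with the wheel reduction.
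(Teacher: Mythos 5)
Your ``only if'' direction is sound and is essentially the mechanism the paper relies on (central elements act by a single scalar on all simples in a block), but note that even this half needs the fact that the $WL[x_1,\ldots,x_n]$-character of $L(\lambda,f)$ is given by evaluation at the drunk-path contents, i.e.\ that $W(\lambda,t)$ really does record the scalars in the non-semisimple regime. The paper secures this by observing that the drunk path $T^{d}(\lambda)$ is maximal for the orders of Enyang and Rui--Si, so the corresponding Murphy vector is an honest simultaneous eigenvector whose image survives in the simple head (together with Xi's classification guaranteeing $L(\lambda,f)\neq 0$ for these parameters); your appeal to ``generalized eigenvalue multisets of the simple module'' is vaguer than this but repairable. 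Also, your eigenvalue bookkeeping is slightly off: the box eigenvalues are $tq^{2c(b)}$, not $q^{2c(b)}$, and the $f$ arc-steps of the drunk path contribute $t$ and $t^{-1}$.

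The genuine gap is the ``if'' direction, which you reduce to the right statement --- that $W(\lambda,t)=W(\mu,t)$ is equivalent to the Rui--Si condition of \cref{admissibility condition} --- but then explicitly leave open as ``the main obstacle.'' That equivalence is the entire content of the paper's argument, and it is not a routine verification: the paper proves it via the factorization $W(\lambda,t)=W(\mu,t)\,W(\lambda/\mu,t)$ for $\mu\subseteq\lambda$ (\cref{7.11}), the criterion that $W(\lambda/\mu,t)=1$ if and only if every diagonal of $\lambda/\mu$ is $(q,t)$-paired with matching multiplicities (\cref{7.12}, using that $q$ is not a root of unity so the linear factors are pairwise distinct), and then, for arbitrary $(\lambda,f),(\mu,l)$, a comparison through $\lambda\cap\mu$ which exploits $D(\lambda/\lambda\cap\mu)\cap D(\mu/\lambda\cap\mu)=\emptyset$ to force both skew factors to equal $1$. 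Moreover, your sketch never uses the hypothesis that $t$ is an \emph{even} power of $q$, yet this is exactly what makes conditions (3) and (4) of \cref{defofblocks} vacuous (a $(q,t)$-content $tq^{2i}$ can equal $q$ or $-q^{-1}$ only for odd powers when $q$ is not a root of unity); without this the statement is false, as the paper's example $n=2$, $t=q^{-1}$ shows, where $W(\emptyset,t)=W((2),t)$ but the two simples lie in different blocks. Finally, your worry about separate casework for $a\in\{\lfloor n/2\rfloor-1,\ldots,n-1\}$ is misplaced: those are the semisimple parameters of \cref{smallevenpowers}, outside the range of \cref{blockseparation}, and no such case division occurs in the actual proof.
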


\section{The Birman-Murakami-Wenzl algebras} \label{Section1}
In this section, we recall the definition of the BMW algebras, together with some well-known facts about their representation theory. The BMW algebras are the $q$-analogues of the Brauer algebras. For details, see for example \cite{Mor}.
\par Denote by $A=\Z[q^{\pm 1},t^{\pm 1},(q-q^{-1})^{-1}]$ and $R=\Z[t^{\pm 1},(q-q^{-1}),\delta]/\langle t-t^{-1}=(q-q^{-1})(\delta-1)\rangle$.
    \begin{defi}\label{BMWdef}\cite{BirmanWenzl}
    For $n\in{\mathbb{N}}$, the Birman-Murakami-Wenzl algebra $B_n(q,t)$ is the quotient of the free $A$-algebra generated by invertible elements $s_1,\ldots,s_{n-1}$ modulo the ideal generated by the following relations:
    \begin{align*}
        & \text{(Spectrum)} \;  (s_i-q)(s_i+q^{-1})(s_i-t^{-1})=0, \; \text{for} \; 1\leq i \leq n-1, \\
        &\text{(Braid relation)} \;  s_is_{i+1}s_i=s_{i+1}s_is_{i+1}, \; \text{for} \; 1\leq i \leq n-2, \\
        & \text{(Locality)} \; s_is_j=s_js_i, \; \text{for} \; \lvert i-j\rvert>1, \\
        &\text{(Delooping 1)} \;  e_is^{\pm}_{j}e_i = t^{\pm 1} e_i, \; \text{for} \; 1\leq i \leq n-1 \; \text{and} \; j=i\pm1, \\
        & \text{(Delooping 2)} \;e_is_i=s_ie_i = t^{-1} e_i, \; \text{for} \; 1\leq i \leq n-1, \\
        &  \text{(Snake)} \; e_ie_{i\pm 1}e_i=e_i, \; \text{for} \; 1\leq i\leq n-1, 
    \end{align*}
    where \begin{equation*}
        \text{(Kauffman-Skein)} \;  e_i=1-(q-q^{-1})^{-1}(s_i-s^{-1}_i), \; \text{for} \; 1\leq i \leq n-1.
    \end{equation*}
\end{defi}
\begin{rem}
    In fact, the snake relations over $A$ can be proved from the other relations. We include it in the presentation as it is a relation that is commonly used and has a categorical interpretation. Note also that if we defined $B_n(q,t)$ over $R$, we would have to add the $e_i$'s as additional generators. It follows from the relations above that $e_i^2=\delta e_i$, for all $1\leq i \leq n-1$, where $\delta=\frac{t-t^{-1}}{q-q^{-1}}+1$.
\end{rem}
\par Morton and Wassermann in \cite{Mor} proved that the BMW algebras are free over $A$ (or $R$), with a basis given by lifts of Brauer diagrams. 
Hence, $\operatorname{rank}_A(B_n(q,t))=\operatorname{rank}_R(B_n(q,t))=(2n-1)!!=1\cdot 3\cdots(2n-1)$. They actually proved that the BMW algebras are isomorphic to the Kauffman tangle algebras. Hence, one can employ the graphical calculus of the Kauffman tangle algebras when studying $B_n(q,t)$. Using this, they proved the following. 
\begin{thm}\cite{Mor}
    Let $\sigma:A\rightarrow \Z[\delta]$ be the algebra morphism defined by extending $\sigma(q-q^{-1})=0, \; \sigma(t)=1, \; \sigma(\delta)=x$ and $B_{n,\Z[x]}(q,t)=B_n(q,t)\otimes_{R} \Z[x]$. Then, as $\Z[x]$-algebras,
    \[
    B_{n,\Z[x]}(q,t)\cong \Br_n(x).
    \]
    where $\Br_n(x)$ is the Brauer algebra over $\Z[x]$.
\end{thm}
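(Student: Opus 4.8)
The plan is to write down a surjection of $\Z[x]$-algebras $\phi\colon B_{n,\Z[x]}(q,t)\twoheadrightarrow\Br_n(x)$ and then upgrade it to an isomorphism by a rank count; essentially all the content sits in constructing $\phi$.

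First I would use the presentation of $B_n(q,t)$ over $R$ in which the $e_i$ are adjoined as generators (so that no division by $q-q^{-1}$ is required): generators $s_1^{\pm1},\dots,s_{n-1}^{\pm1},e_1,\dots,e_{n-1}$, subject to the braid and locality relations, the integral skein relation $s_i-s_i^{-1}=(q-q^{-1})(1-e_i)$, the spectrum relation, which expanded reads $s_i^3-(t^{-1}+q-q^{-1})s_i^2+((q-q^{-1})t^{-1}-1)s_i+t^{-1}=0$ and thus has coefficients in $R$, the delooping relations $e_is_i=s_ie_i=t^{-1}e_i$ and $e_is_{i\pm1}^{\pm1}e_i=t^{\pm1}e_i$, the snake relations $e_ie_{i\pm1}e_i=e_i$, and $e_i^2=\delta e_i$. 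Since $B_{n,\Z[x]}(q,t)=B_n(q,t)\otimes_R\Z[x]$ is then presented over $\Z[x]$ by the same generators and the $\sigma$-images of these relations, to define $\phi$ it suffices to send $s_i$ to the simple transposition and $e_i$ to the contraction diagram of $\Br_n(x)$ and to check that the $\sigma$-image of each relation holds there. As $\sigma$ sends $q-q^{-1}\mapsto0$, $t\mapsto1$, $\delta\mapsto x$: the skein relation becomes $s_i=s_i^{-1}$, i.e.\ $s_i^2=1$; the spectrum relation becomes $s_i^3-s_i^2-s_i+1=(s_i-1)^2(s_i+1)=0$, automatic from $s_i^2=1$; the delooping relations become $s_ie_i=e_is_i=e_i$ and $e_is_{i\pm1}e_i=e_i$; the snake relations become $e_ie_{i\pm1}e_i=e_i$; and $e_i^2=\delta e_i$ becomes $e_i^2=xe_i$ --- all of which hold in $\Br_n(x)$. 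Hence $\phi$ is well defined, and it is onto since transpositions and contractions generate $\Br_n(x)$.

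For the rank count, $\Br_n(x)$ is free over $\Z[x]$ of rank $(2n-1)!!$ on the Brauer diagrams, and $B_{n,\Z[x]}(q,t)=B_n(q,t)\otimes_R\Z[x]$ is free over $\Z[x]$ of the same rank by base change from the Morton--Wassermann basis theorem already quoted (freeness of $B_n(q,t)$ over $R$ on lifts of Brauer diagrams). A surjection between free modules of equal finite rank over a commutative ring is automatically injective --- this is the standard fact that a surjective endomorphism of a finitely generated module over a commutative ring is an isomorphism, proved by the Cayley--Hamilton/Nakayama determinant trick. Therefore $\phi$ is an isomorphism of $\Z[x]$-algebras.

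The delicate step here is the well-definedness of $\phi$: it tacitly assumes that the specialized list of BMW relations presents $\Br_n(x)$ itself and not a proper quotient, which in the argument above is hidden in comparing against ``the defining relations of $\Br_n(x)$''. The way to sidestep this --- and presumably the route of \cite{Mor} --- is to work in the Kauffman-tangle model: via the isomorphism with the Kauffman tangle algebra $KT_n$ recalled above, one defines $\phi$ directly by sending a framed tangle to its underlying Brauer diagram, removing each closed component at the cost of a factor $x$. When $q-q^{-1}=0$, $t=1$, $\delta=x$ the Kauffman skein relation flattens every crossing, the framing (curl) relation becomes trivial, and the loop relation contributes precisely the factor $x$; these are exactly the identifications already built into Brauer diagrams, so $\phi$ is manifestly well defined, it is an algebra map because stacking of tangles induces stacking of diagrams, and it carries the chosen tangle basis bijectively onto the diagram basis --- giving the isomorphism directly, with no determinant trick needed.
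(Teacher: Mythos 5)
Your argument is correct, and its two halves play different roles relative to the paper. The paper does not prove this statement at all: it is quoted from \cite{Mor}, and the route indicated there (the isomorphism with the Kauffman tangle algebra plus the basis of lifted Brauer diagrams) is precisely your closing paragraph, where the specialization $q-q^{-1}\mapsto 0$, $t\mapsto 1$, $\delta\mapsto x$ flattens crossings and sends the tangle basis bijectively to the Brauer-diagram basis. Your first argument is a genuinely different, presentation-theoretic route: check that the $\sigma$-images of the integral relations hold for transpositions and contractions in $\Br_n(x)$, get a surjection, and close with the rank count $(2n-1)!!$ on both sides together with the fact that a surjection between free $\Z[x]$-modules of equal finite rank is injective. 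That is sound, but two remarks. First, you have mislocated the delicate step: well-definedness of $\phi$ does not require that the specialized relations present $\Br_n(x)$ rather than a proper quotient --- you only map \emph{into} $\Br_n(x)$ and verify relations there. What it does require is that the relation list you wrote down (with the $e_i$ adjoined as generators, including the distant commutation relations among the $s_i$ and $e_i$) is a \emph{complete} presentation of the $R$-form whose base change to $A$ recovers \cref{BMWdef}; the paper only presents $B_n(q,t)$ over $A$, where $(q-q^{-1})$ is invertible, and the identification of the presented integral form with the one carrying the Morton--Wassermann basis is itself part of the content of \cite{Mor}. So the presentation route quietly leans on the same tangle-theoretic input as the cited proof, and buys you mainly an argument that avoids drawing pictures at the cost of the determinant-trick finish; the tangle route gives the isomorphism basis-to-basis directly, which is why it is the one \cite{Mor} (and the paper) takes. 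Second, the statement's $\sigma:A\rightarrow\Z[\delta]$ cannot literally be defined on $A$ (which inverts $q-q^{-1}$); as your write-up implicitly does, it must be read on $R$, consistent with the tensor product $\otimes_R$ in the statement.
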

Since eventually we will work with $B_n(q,t)$ over $\C$ by specializing $q,t$ and compare the situation with that of the Brauer algebras, we make some useful remarks. For $n\in \Z$, denote by \[
[n]_q=\frac{q^n-q^{-n}}{q-q^{-1}}=q^{n-1}+q^{n-3}+\ldots+q^{-(n-3)}+q^{-(n-1)}\] the quantum integer corresponding to $n$. For $q\in \C$ such that $q-q^{-1}\in \C^{*}$ and $t=\pm q^N$, for some integer $N$, then we have $\delta=[\pm N]_q+1$. Specializing further $q\rightarrow 1$, we get $\delta=\pm N+1$. Hence, the classical limit of $B_n(q,\pm q^N)$ is $\Br_n(\pm N+1)$, which gives us a parity offset in the sense that when $t$ is an even (odd) power of $q$, this corresponds to the case of Brauer with odd (even) parameter.
\par Using this graphical calculus and the basis theorem in \cite{Mor}, it is now easy to show that the subalgebra of $B_n(q,t)$ generated by $s_1,\ldots,s_{n-2}$ is isomorphic to $B_{n-1}(q,t)$. This means that we have a tower of $A$ (respectively $R$)-algebras,
\[
B_1(q,t)\hookrightarrow B_2(q,t) \hookrightarrow \ldots\hookrightarrow B_{n-1}(q,t) \hookrightarrow B_n(q,t)\hookrightarrow\ldots
\]
\section{Cellular structure and representation theory}
\par In this section, we recall some well-known results concerning the cellular structure of $B_n(q,t)$ and its representation theory.
\par It was proved by Xi in \cite{Xi}, that $B_n(q,t)$ is a cellular algebra over $A$ in the sense of Graham and Lehrer \cite{GL}. Later, Enyang in \cite{EnyangMurphyBasisold} constructed a Murphy basis $\{m_T \; | \; T\in T_n^{\operatorname{ud}}(\lambda)\}$ for each cell module of $B_n(q,t)$ over $A$. To make this precise, we need a remarkable family of commuting elements in $B_n(q,t)$ called Jucys-Murphy elements.
\par For any algebra morphism $\operatorname{ev_\C}:A\rightarrow \C$, the Birman-Murakami-Wenzl algebras over $\C$ are defined as $B_{n,\C}(q,t)=B_n(q,t)\otimes_A \C$, where we abuse notation and denote the images of $q,t$ via $\operatorname{ev_\C}$ by $q,t\in \C^{*}$, for $q^2\neq 1$. Whenever it is clear from the context, we will denote $B_{n,\C}(q,t)$ by $B_n(q,t)$.
\begin{defi}
    The Hecke algebra $H_n(q)$ associated to the symmetric group $S_n$ is the associative algebra over $\Z[q^{\pm 1}]$ generated by invertible $s_1,\ldots, s_{n-1}$ modulo the ideal generated by the relations:
    \begin{align*}
        & \; (s_i-q)(s_i+q^{-1})=0, \; \text{for} \; 1\leq i \leq n-1, \\
        &  \; s_is_{i+1}s_i=s_{i+1}s_is_{i+1}, \; \text{for} \; 1\leq i \leq n-2 \\
        & \; s_is_j=s_js_i, \; \text{for} \; \lvert i-j\rvert > 1.
    \end{align*}
\end{defi}
It is well known that $B_n(q,t)/(e_1)\cong H_n(q)$ as $\Z[q^{\pm 1}]$-algebras, where $(e_1)$ is the ideal generated by the quasi-idempotent $e_1$. The corresponding isomorphism maps $s_i+ (e_1)$ to the generator with the same name in $H_n(q)$. However, $H_n(q)$ is not a subalgebra of $B_n(q,t)$.
\par We now recall some combinatorics needed to state Enyang's result.
\par A partition of $n$ is a weakly decreasing sequence of non-negative integers $\lambda=(\lambda_1,\lambda_2,\ldots)$ such that $\lambda_1\geq \lambda_2\geq \ldots$, where $\lvert\lambda\rvert:=\sum\limits_{i=1}^{\infty} \lambda_i=n$. We use the notation $\lambda\vdash n$ when $\lambda$ is a partition of $n$. Let $\mathcal{P}(n)$ be the set of all partitions of $n\in \Z_{\geq 0}$. This is a poset with dominance order $\unrhd$ on it. For two partitions $\lambda,\mu\in \mathcal{P}(n)$, we say that $\lambda \unrhd \mu$ if and only if $\sum\limits_{i=1}^j \lambda_i\geq \sum\limits_{i=1}^j \mu_i$, for all $j$. Write $\lambda \rhd \mu$ if $\lambda \unrhd \mu$ and $\lambda\neq \mu$.
\par Partitions will be depicted as Young diagrams. The Young diagram of a partition $\lambda=(\lambda_1,\lambda_2,\ldots)$ is a collection of boxes arranged in left aligned rows with $\lambda_i$ boxes in the $i$-th row. We identify a partition with its corresponding Young diagram.
A standard tableau of shape $\lambda\vdash n$ is obtained by inserting the numbers $1,\ldots,n$ to the boxes of $\lambda$ such that the numbers are strictly increasing, both along the rows and the columns of $\lambda$.
\begin{ex}\label{stdtab32}
    The following are standard tableaux of shape $(3,2)\vdash 5$
   \begin{align*}
  &  \scalebox{0.7}{\begin{ytableau}
        1 & 2 & 3 \\
        4 & 5
    \end{ytableau}
     \quad 
     \begin{ytableau}
        1 & 3 & 5 \\
        2 & 4
    \end{ytableau} }
    \end{align*}
\end{ex}
A box in position $(i,j)$ is called removable for $\lambda\vdash n$ (respectively, addable) if by removing this box the result is again a partition $\mu \vdash n-1$ (by adding this box we get a partition of $n+1$). These correspond to right inner, respectively outer corner boxes of $\lambda$. Denote the set of removable boxes of $\lambda$ by $R(\lambda)$ (respectively, $A(\lambda)$ for the set of addable boxes).
\begin{ex}
    The removable boxes of the partition $\lambda=(3,2)\vdash 5$ are the boxes in positions $(1,3),(2,2)$, while the addable boxes are in positions $(1,4),(2,3),(3,1)$. 
    The removable boxes are colored red, while the addable ones are colored by blue.
    \[\scalebox{0.7}{\begin{ytableau}
       \ & \ & *(red) & *(blue) \\
        & *(red) & *(blue) \\
        *(blue)
    \end{ytableau}}\]
\end{ex}
\par Denote by $T^{\operatorname{std}}(\lambda)$ the set of all $\lambda$ standard tableaux. For a partition $\lambda \vdash n$ and a box $p=(i,j)$, call the number $j-i$ the content of that box and denote it by $c(p)=j-i$. Contents separate the Young diagram of $\lambda$ in diagonals. If we denote by $h(\lambda)$ (respectively $W(\lambda,t)$) the height (respectively width) of $\lambda$, i.e., the number of rows -1 (respectively the number of columns -1), then the contents appearing in $\lambda \vdash n$ are the numbers in the set $\{-h(\lambda),\ldots,w(\lambda)\}$.
\par Let $D(\lambda)=\{-h(\lambda),\ldots,w(\lambda)\}$ be the set of contents of $\lambda$ and for $i\in D(\lambda)$, let $m_{\lambda}(i)$ be the length of the diagonal with content $i$.  It is easy to see that contents, together with the length of their diagonals, completely determine the shape of the partition. That is, if $\lambda,\mu\vdash n$ are such that $D(\lambda)=D(\mu)$ and $m_{\lambda}(i)=m_{\mu}(i)$ for all $i\in D(\lambda)$, then $\lambda = \mu$.
\par By ordering partitions via inclusion of their corresponding Young diagrams, we get a poset, called the Young poset. This is the leveled graph with vertex set at level $n$ being $\mathcal{P}(n)$, and $\lambda\vdash n$ is connected with an edge to $\mu \vdash n+1$ and write $\lambda \rightarrow \mu$, if $\mu$ can be obtained from $\lambda$ by adding a box (equivalently, $\lambda$ can be obtained by removing a box from $\mu$). 
\par A path to $\lambda\vdash n$ in this graph is an $n+1$-tuple $T=(T_i)_{i=0}^n$, such that $T_0=\emptyset \vdash 0, \; T_n=\lambda, \; T_i\vdash i$ for all $i=1,\ldots,n$ such that $T_i\rightarrow T_{i+1}$. It is well known that the set of paths to $\lambda$ is in bijection with standard tableaux of shape $\lambda$, for any partition.
\begin{ex}\label{canpath}
    The standard tableau of shape $(3,2)\vdash 5$ 
    \[\scalebox{0.7}{
    \begin{ytableau}
        1 & 3 & 5 \\
        2 & 4
    \end{ytableau}
    }\]
    corresponds to the following path to $(3,2)$
    \[
\ytableausetup{boxsize=0.7em}
\vcenter{\hbox{$\emptyset$}} \rightarrow
\vcenter{\hbox{\ydiagram{1}}} \rightarrow
\vcenter{\hbox{\ydiagram{1,1}}} \rightarrow
\vcenter{\hbox{\ydiagram{2,1}}} \rightarrow
\vcenter{\hbox{\ydiagram{2,2}}} \rightarrow
\vcenter{\hbox{\ydiagram{3,2}}}
\]
\end{ex}
Denote by $T^{\operatorname{can}}(\lambda)$ the canonical path to $\lambda$, that is, the path that prioritizes completing the rows of $\lambda$ before moving on to adding boxes in the next row.
\begin{ex}
    The canonical path to $(3,2)\vdash 5$ is the path corresponding to the first standard tableau in \cref{stdtab32}.
\end{ex}
Note that we can give the set of paths to $\lambda$ the structure of a poset, by setting $T\leq S$ if $T_i \unlhd S_i$, for all $i=0,\ldots,n$. Then the path $T^{\operatorname{can}}(\lambda)$ is maximal among the paths to $\lambda$.
\subsection{Combinatorics for BMW algebras and Jucys-Murphy elements.}
\par Now let $\Lambda_n=\{ (\lambda,f) \; | \; \lambda\in \mathcal{P}(n-2f), \; f=0,\ldots,[\frac{n}{2}]\}$. It is a poset with $\succeq$ defined on it. Explicitly, for $(\lambda,f),(\mu,l)\in \Lambda_n$, $(\lambda,f)\succeq (\mu,l)$ is valid if $f>l$ or $f=l$ and $\lambda \unrhd \mu$ in the dominance order $\unrhd$. Write $(\lambda,f)\succ (\mu,l)$, if $(\lambda,f)\succeq (\mu,l)$ and $(\lambda,f)\neq (\mu,l)$.
\par Given $(\lambda,f)\in \Lambda_n$, an updown $\lambda$-tableau is a sequence of partitions $T=(T_i)_{i=0}^n$ such that $T_0=\emptyset, \; T_n=\lambda$ and either $T_i\rightarrow T_{i-1}$ or $T_{i-1}\rightarrow T_i$ in the Young graph. This means that each partition is obtained from the previous one by either adding or removing a box.
\begin{ex}
    Consider $(1,(2,2))\in \Lambda_6$. The following is an updown $(2,2)$-tableau
    \[
\vcenter{\hbox{$\emptyset$}} \rightarrow
\vcenter{\hbox{\ydiagram{1}}} \rightarrow
\vcenter{\hbox{$\emptyset$}} \rightarrow
\vcenter{\hbox{\ydiagram{1}}} \rightarrow
\vcenter{\hbox{\ydiagram{2}}} \rightarrow
\vcenter{\hbox{\ydiagram{2,1}}} \rightarrow
\vcenter{\hbox{\ydiagram{2,2}}}
\]

\end{ex}
Denote the set of updown $\lambda$-tableaux by $T^{\operatorname{ud}}_n(\lambda)$. Note that when $\lambda \vdash n$, then $T^{\operatorname{ud}}_n(\lambda)$ is identified with the set of all $\lambda$-standard tableaux. Rui and Si defined a partial order $\succeq$ on $T^{\operatorname{ud}}_n(\lambda)$ as follows. For $S,T\in T^{\operatorname{ud}}_n(\lambda)$ we write $S\overset{k}\succ T$ if $S_k\rhd T_k$ and $S_j=T_j$ for all $k+1\leq j \leq n$. If there is $1\leq k \leq n-1$ such that $S\overset{k}\succ T$, we write $S\succ T$. 

\begin{defi}\label{jmbmw}
    Let $x_1=t$ and $x_{i+1}=s_ix_is_i$, for $i=1,\ldots,n-1$. Call the elements $x_1,\ldots,x_n\in B_n(q,t)$ the Jucys-Murphy elements of the BMW algebras.
\end{defi}
Note that under the embedding $B_i(q,t)\hookrightarrow B_n(q,t)$, we have $x_i\in B_i(q,t)$ for all $i=1,\ldots,n$. Further, it is easy to see that
\[
x_i\in C(B_i(q,t),B_{i-1}(q,t))=\{f\in B_i(q,t) \; | \; fx=xf, \quad \forall x\in B_{i-1}(q,t)\},
\]
that is, $x_i$ centralizes the subalgebra $B_{i-1}(q,t)$. This proves that the subalgebra generated by $x_1^{\pm 1},x_2^{\pm 1},\ldots,x_n^{\pm}$ is commutative. Further, $x_1\ldots x_n$ is central in $B_n(q,t)$ over $A$.
\begin{defi}\label{qtcont}
  For any $T\in T^{\operatorname{ud}}_n(\lambda)$ with $(\lambda,f)\in \Lambda_n$, define the $(q,t)$-content of $T_k$ to be $c_T(k)\in A$ given by
  \begin{align*}
      c_T(k)=\begin{cases}
          tq^{2i}, \quad \text{if} \; T_k=T_{k-1}\cup \{p\} \\
          t^{-1}q^{-2i}, \quad \text{if} \; T_{k-1}=T_k\cup \{p\},
      \end{cases}
  \end{align*}
  where $c(p)=i\in D(\lambda)$.
\end{defi}
\ytableausetup{boxsize=0.7em}
\begin{ex}
    Let $T=\Big(
\vcenter{\hbox{$\emptyset$}} \rightarrow
\vcenter{\hbox{\ydiagram{1}}} \rightarrow
\vcenter{\hbox{\ydiagram{2}}} \rightarrow
\vcenter{\hbox{\ydiagram{1}}} \rightarrow
\vcenter{\hbox{\ydiagram{1,1}}}
\Big)$. Then $c_T(1)=t,c_T(2)=tq^2,c_T(3)=t^{-1}q^{-2},c_T(4)=tq^{-2}$.
\end{ex}
\begin{rem}
    \cref{qtcont} can be simplified as follows. To each step of an updown $\lambda$-tableau, we record the number $tq^{2i}$ if we added a box in the $i$-th diagonal for that step, and we record the number $t^{-1}q^{-2i}$ if we removed a box of content $i$ in that step. This is a way to keep track of the up-down steps in a path in the same way that contents remember standard tableaux.
    \end{rem}
\subsection{Semisimplicity and Branching graph}
In this subsection we recall some results about the representation theory of $B_n(q,t)$ over $\C$.
\begin{thm}\cite{WEN}\label{semibywen}
    For $q,t\in \C$, the algebra $B_n(q,t)$ is semisimple, unless $q$ is a root of unity or $t=\pm q^a$, for some $a\in \Z$.
\end{thm}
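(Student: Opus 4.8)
The plan is to reduce semisimplicity to the invertibility of the cellular Gram forms, to diagonalize the Jucys--Murphy action over a suitable localization of $A$ by a \emph{seminormal basis}, and to finish by specialization. This is a streamlined version of Wenzl's argument in \cite{WEN}, which instead builds the irreducible representations directly as path algebras on the Bratteli diagram, with the $(q,t)$-contents $c_T(k)$ of \cref{qtcont} as spectral data.

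First I would use cellularity of $B_n(q,t)$ over $A$: the cell modules $\Delta(\lambda,f)$, $(\lambda,f)\in\Lambda_n$, have $A$-bases given by the Murphy basis and indexed by $T^{\operatorname{ud}}_n(\lambda)$, and $\sum_{(\lambda,f)\in\Lambda_n}\bigl(\#T^{\operatorname{ud}}_n(\lambda)\bigr)^2=(2n-1)!!=\dim_\C B_{n,\C}(q,t)$; hence it suffices to make every $\Delta(\lambda,f)$ simple and the $\Delta(\lambda,f)$ pairwise non-isomorphic after specialization. To do this, let $\mathcal{O}$ be the localization of $A=\Z[q^{\pm1},t^{\pm1},(q-q^{-1})^{-1}]$ at the multiplicative set generated by the \emph{separating elements} $c_S(k)-c_T(k)$ --- over all $(\lambda,f)\in\Lambda_n$ and all $S,T\in T^{\operatorname{ud}}_n(\lambda)$ with $S_j=T_j$ for $j\neq k$ and $S_k\neq T_k$ --- together with the finitely many further factors forced by the quasi-idempotents $e_i$ (roughly, the values $\tfrac{tq^{2i}-t^{-1}q^{-2i}}{q-q^{-1}}+1$ attached to contents $i$, with $\delta$ the case $i=0$). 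Applying a Gram--Schmidt procedure to the Murphy basis along the order $\succeq$ should yield a basis $\{f_T\}$ of $\Delta(\lambda,f)\otimes_A\mathcal{O}$ diagonalizing the JM elements, $x_k f_T=c_T(k)f_T$, with the $s_i$- and $e_i$-action coefficients lying in $\mathcal{O}$; the orthogonal idempotents $F_T=\prod_{k=1}^{n}\prod_{c\neq c_T(k)}\frac{x_k-c}{c_T(k)-c}$ then give a complete set of matrix units, so that $B_n(q,t)\otimes_A\mathcal{O}\cong\bigoplus_{(\lambda,f)\in\Lambda_n}\End_{\mathcal{O}}\bigl(\Delta(\lambda,f)\otimes_A\mathcal{O}\bigr)$ by a rank count.

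Then I would specialize. An evaluation $A\to\C$ sending $q,t$ to $q,t\in\C^\times$ with $q^2\neq1$ factors through $\mathcal{O}$ exactly when no separating element is sent to $0$. The add/add and remove/remove differences are, up to units, $q^{2(i-j)}-1$, and so vanish only when $q$ is a root of unity; the add/remove differences are, up to units, $t^2q^{2(i+j)}-1$, and so vanish only when $t=\pm q^{-(i+j)}$ with $i+j\in\Z$; and the auxiliary $e_i$-factors, being roots of a quadratic, vanish only when $tq^{2i}\in\{q^{-1},-q\}$, i.e. $t=\pm q^a$ for some $a\in\Z$ (in particular $\delta=0$ forces $t\in\{q^{-1},-q\}$). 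Hence, if $q$ is not a root of unity and $t\neq\pm q^a$ for all $a\in\Z$, the specialization factors through $\mathcal{O}$, and tensoring the isomorphism above over $\mathcal{O}$ with $\C$ exhibits $B_{n,\C}(q,t)$ as a direct sum of full matrix algebras over $\C$, hence split semisimple.

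The main obstacle is the middle step: constructing the seminormal basis over $\mathcal{O}$ and, above all, certifying the \emph{complete} list of denominators that the $e_i$ impose. The delooping and snake relations have no counterpart for the symmetric group, which is precisely what makes this harder than the Okounkov--Vershik setting and is why a rigorous treatment must either carry out Enyang's orthogonalization explicitly or quote the Gram-determinant computations of Rui--Si \cite{RSi1,RSi2}; by comparison, the rank identity, the idempotent formulas and the specialization step are routine.
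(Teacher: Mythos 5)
The paper does not actually prove \cref{semibywen}; it imports it from \cite{WEN}, where Wenzl argues via the tower construction, conditional expectations and a Markov trace, building the irreducible representations inductively along the Bratteli diagram. Your route (cellularity, Murphy basis, seminormal form over a localization $\mathcal{O}$ of $A$, then specialization) is therefore genuinely different in method, and the outer layers are fine: the rank identity $\sum_{(\lambda,f)}(\#T^{\operatorname{ud}}_n(\lambda))^2=(2n-1)!!$, the identification of the add/add and add/remove separating elements with $q^{2(i-j)}-1$ and $t^2q^{2(i+j)}-1$ up to units, and the specialization step are all correct. But the step you yourself flag is a genuine gap, not a routine verification: the assertion that Gram--Schmidt along $\succeq$ produces a basis of $\Delta(\lambda,f)\otimes_A\mathcal{O}$ on which the $s_i$ and $e_i$ act with coefficients in $\mathcal{O}$, with the \emph{complete} list of denominators being the separating elements plus the quoted quadratic $e_i$-factors, is exactly the hard content of the theorem (it is where the $t=\pm q^a$ wall comes from), and as written it is asserted rather than proved. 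Filling it by quoting the Gram determinants of \cite{RSi1} is legitimate but then the statement is an immediate corollary of \cref{RuiSisemisimplicity} and the argument no longer proves anything Wenzl-style on its own.

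The gap can, however, be bypassed rather than filled: for this statement you do not need a seminormal form over $\mathcal{O}$, nor any control of the $e_i$-coefficients. Enyang's Murphy basis \cite{EnyangMurphyBasisold} is defined over $A$, so it specializes, and over $\C$ the elements $x_1,\ldots,x_n$ act on each cell module triangularly with diagonal entries $c_T(k)$ as in \cref{qtcont}. If $q$ is not a root of unity and $t\neq\pm q^a$, the content sequence $(c_T(1),\ldots,c_T(n))$ determines $T$ uniquely, even across different $(\lambda,f)$, since each value $tq^{2i}$ or $t^{-1}q^{-2i}$ records unambiguously whether a box was added or removed and on which diagonal, and a partition has at most one addable and one removable box per diagonal. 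This is the separation condition for a cellular algebra equipped with a family of JM elements, and by the general criterion of Mathas (seminormal forms and Gram determinants for cellular algebras) --- equivalently, by constructing the interpolation idempotents $F_T$ directly inside $B_{n,\C}(q,t)$, where every denominator is a difference of two distinct complex numbers --- separation over $\C$ already forces all Gram forms to be nondegenerate, hence $B_{n,\C}(q,t)$ is split semisimple. With that substitution your outline becomes a complete and genuinely independent proof; I would also record explicitly that you need separation across shapes (not only the within-shape differences at which you localize), which the reconstruction argument above supplies.
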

Throughout this section, we assume that $q,t\in \C$ are such that $q$ is not a root of unity and $t\neq \pm q^a$, for $a\in \Z$. Then the algebra $B_n(q,t)$ is semisimple. When $q,t\in \C$ are as above, we will refer to them as generic parameters for the BMW algebras. Intuitively this means that we are working with $B_n(q,t)$ defined over $\C(q,t)$ and then specializing to $\C$.
\begin{thm}
    The simple modules of $B_n(q,t)$ are indexed by the set $\Lambda_n$.
\end{thm}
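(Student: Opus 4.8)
The plan is to deduce this from the cellular structure of $B_n(q,t)$ together with Wenzl's semisimplicity criterion. Recall from Xi \cite{Xi} that $B_n(q,t)$ is a cellular algebra over $A$ in the sense of Graham and Lehrer \cite{GL}, with weight poset $\Lambda_n$ and a family of cell modules $\{\Delta(\lambda,f)\}_{(\lambda,f)\in\Lambda_n}$; by Enyang \cite{EnyangMurphyBasisold} each $\Delta(\lambda,f)$ has a Murphy basis indexed by $T^{\operatorname{ud}}_n(\lambda)$, so in particular $\dim\Delta(\lambda,f)=\lvert T^{\operatorname{ud}}_n(\lambda)\rvert\geq 1$. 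Base change along the specialization $A\to\C$ preserves the cell datum, so $B_{n,\C}(q,t)$ is again cellular, with weight poset $\Lambda_n$ and cell modules obtained from the $\Delta(\lambda,f)$.

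Next I apply the general theory of cellular algebras over a field \cite{GL}. Each cell module $\Delta(\mu)$ carries a canonical symmetric bilinear form $\phi_\mu$; writing $L(\mu)=\Delta(\mu)/\operatorname{rad}\phi_\mu$, the collection of nonzero $L(\mu)$ is a complete set of pairwise non-isomorphic simple modules, and the algebra is semisimple if and only if every $\phi_\mu$ is non-degenerate, equivalently $L(\mu)=\Delta(\mu)$ for every $\mu$. Since $q$ is not a root of unity and $t\neq\pm q^a$ for $a\in\Z$, \cref{semibywen} gives that $B_n(q,t)$ is semisimple; hence each $L(\lambda,f)=\Delta(\lambda,f)$ is nonzero and simple, distinct elements of $\Lambda_n$ yield non-isomorphic modules, and by the completeness statement these exhaust the simple modules. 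Thus the simple $B_n(q,t)$-modules are indexed by $\Lambda_n$. As a check, the cell-basis identity $\dim_\C B_n(q,t)=\sum_{(\lambda,f)\in\Lambda_n}(\dim_\C\Delta(\lambda,f))^2$ matches the Artin-Wedderburn decomposition, both sides being equal to $(2n-1)!!$.

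There is no substantial obstacle here: the proof is essentially a packaging of the cited results, and the only delicate point is the precise identification of the cell datum of $B_n(q,t)$ — the weight poset with $(\Lambda_n,\succeq)$ and the cell modules with the modules $\Delta(\lambda,f)$ carrying updown-tableau bases — which is exactly what \cite{Xi} and \cite{EnyangMurphyBasisold} supply. A more self-contained alternative avoids cellular generalities: via the isomorphism $B_{n,\Z[x]}(q,t)\cong\Br_n(x)$ the generic BMW algebra degenerates to a generic Brauer algebra, whose simple modules are classically indexed by $\Lambda_n$, and one transports the count of simple summands across this (generically semisimple) specialization by a standard semicontinuity argument. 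I will use the cellular route, since it simultaneously produces the Murphy bases needed in the sequel.
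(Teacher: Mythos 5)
Your proof is correct, but it is worth pointing out that the paper does not actually prove this statement: it is recalled as a known fact in the subsection on semisimplicity and the branching graph, alongside \cref{semibywen} and \cref{branching}, which are imported from \cite{WEN}. So where the paper supplies a citation, you supply an argument, and your argument is a clean and standard one: cellularity of $B_n(q,t)$ over $A$ with weight poset $\Lambda_n$ (\cite{Xi}, \cite{GL}), base change to $\C$, Enyang's Murphy bases \cite{EnyangMurphyBasisold} to see $\dim_\C\Delta(\lambda,f)=\lvert T^{\operatorname{ud}}_n(\lambda)\rvert\geq 1$, and then the Graham--Lehrer dichotomy (the simples are the nonzero heads $\Delta(\mu)/\operatorname{rad}\phi_\mu$, and semisimplicity is equivalent to nondegeneracy of every $\phi_\mu$) combined with the genericity assumption via \cref{semibywen}. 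The one delicate point is exactly the one you negotiate correctly: Graham--Lehrer index the simples by the $\mu$ with $\phi_\mu\neq 0$, not a priori by all of $\Lambda_n$, and it is the semisimplicity hypothesis that forces the index set to be all of $\Lambda_n$ (indeed, for $t\in\{-q,q^{-1}\}$ and $n$ even the paper later notes that $L((\tfrac n2,\emptyset))=0$, so the hypothesis is genuinely needed). Your route has the added benefit of being uniform with the machinery the paper uses afterwards (cell modules, Murphy bases, and the identification $\Delta(\lambda,f)=L(\lambda)$ in the semisimple range). The alternative you sketch, degenerating to the Brauer algebra and invoking semicontinuity, would require more care to turn a count of simples into an actual indexing by $\Lambda_n$, but since you do not rely on it, this does not affect the proof.
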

From now on, denote the simple module of $B_n(q,t)$ corresponding to $(\lambda,f)\in \Lambda_n$ by $L(\lambda)$ when it is clear that $\lambda \vdash n-2f$. Since we established that $B_{n-1}(q,t)$ is a subalgebra of $B_n(q,t)$, we can restrict simple modules of $B_n(q,t)$ to $B_{n-1}(q,t)$ and see what the decomposition rules are.
\begin{thm}\cite{WEN}\label{branching}
    Let $L(\lambda,f)$ be the simple module of $B_n(q,t)$ corresponding to $(\lambda,f)\in \Lambda_n$. Then
    \begin{align*}
        \Res^{B_n(q,t)}_{B_{n-1}(q,t)}L(\lambda)=\bigoplus\limits_{\Lambda_{n-1}\ni\mu=\lambda\pm \Box}L(\mu),
    \end{align*}
    or in simple words, the restriction decomposes into the irreducible $B_{n-1}(q,t)$ representations corresponding to either removing or adding a box to $\lambda$. In particular, the above decomposition is multiplicity free. 
\end{thm}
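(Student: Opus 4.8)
This branching rule is originally due to Wenzl \cite{WEN}; the route I would take reuses the cellular infrastructure recalled above and invokes semisimplicity only at the very end. Write $\Delta_n(\lambda,f)$ for the cell module of $B_n(q,t)$ indexed by $(\lambda,f)\in\Lambda_n$, equipped with Enyang's Murphy basis $\{m_T : T\in T^{\operatorname{ud}}_n(\lambda)\}$ from \cite{EnyangMurphyBasisold}. The first step is to record the tautological bijection
\[
T^{\operatorname{ud}}_n(\lambda)\;\xrightarrow{\ \sim\ }\;\bigsqcup_{\substack{\mu\in\Lambda_{n-1}\\ \mu=\lambda\pm\Box}}T^{\operatorname{ud}}_{n-1}(\mu),\qquad (T_i)_{i=0}^{n}\longmapsto (T_i)_{i=0}^{n-1},
\]
which forgets the last step of an updown tableau; this is a bijection because $T_{n-1}$ must differ from $T_n=\lambda$ by exactly one box, and conversely every updown $\mu$-tableau with $\mu=\lambda\pm\Box$ extends uniquely to $\lambda$. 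Two consequences fall out: the dimension recursion $\#T^{\operatorname{ud}}_n(\lambda)=\sum_{\mu=\lambda\pm\Box}\#T^{\operatorname{ud}}_{n-1}(\mu)$, and the fact that every fibre of the bijection is a singleton, which is ultimately the source of the multiplicity-freeness.

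The heart of the argument is to show that, as a $B_{n-1}(q,t)$-module, $\Res^{B_n(q,t)}_{B_{n-1}(q,t)}\Delta_n(\lambda,f)$ carries a filtration over $A$ whose subquotients are exactly the cell modules $\Delta_{n-1}(\mu,g)$, one for each $(\mu,g)\in\Lambda_{n-1}$ with $\mu=\lambda\pm\Box$. One filters by the value of $T_{n-1}$: since $B_{n-1}(q,t)$ is generated by $s_1,\dots,s_{n-2}$, it acts only on the first $n-1$ steps of an updown tableau, and — using that Enyang's basis is straightening-triangular with respect to the order $\succeq$ — the $A$-span of the $m_T$ with $T_{n-1}$ fixed or strictly smaller is a $B_{n-1}(q,t)$-submodule whose top graded piece is isomorphic to $\Delta_{n-1}(\mu,g)$ via $m_T\mapsto m_{(T_0,\dots,T_{n-1})}$. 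No semisimplicity is needed for this; it is an instance of the standard ``cell branching'' phenomenon for towers of cellular algebras. In the semisimple range it can be made completely concrete: rescaling the $m_T$ produces seminormal vectors $f_T$ that are simultaneous eigenvectors of the Jucys--Murphy elements of \cref{jmbmw}, with $x_k$ acting on $f_T$ by the $(q,t)$-content $c_T(k)$ of \cref{qtcont}; since $c_T(1),\dots,c_T(n-1)$ depend only on $(T_i)_{i\le n-1}$, the subspaces $\operatorname{span}\{f_T : T_{n-1}=\mu\}$ are $B_{n-1}(q,t)$-stable and afford $L(\mu)$.

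To conclude: under the standing hypotheses \cref{semibywen} makes $B_{n-1}(q,t)$ semisimple, so the filtration of the previous step splits, and in this range each cell module $\Delta_m(\nu,e)$ coincides with the simple module $L(\nu,e)$. Combining this with the singleton fibres of the bijection yields the asserted decomposition of $\Res^{B_n(q,t)}_{B_{n-1}(q,t)}L(\lambda)$, with each $L(\mu)$ for $\mu=\lambda\pm\Box$ occurring exactly once; multiplicity-freeness is then immediate. (One could instead argue by a Jones basic construction: $B_n(q,t)/(e_{n-1})\cong H_n(q)$ reduces the $f=0$ case to the classical Hecke branching rule, while $e_{n-1}B_n(q,t)e_{n-1}\cong B_{n-2}(q,t)$ handles $f\ge 1$ inductively — but the cellular route is cleaner given the combinatorics already set up here.)

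The only genuinely nontrivial point is the cell-filtration step, i.e.\ checking that Enyang's Murphy basis restricts triangularly to $B_{n-1}(q,t)$, equivalently that the seminormal vectors $f_T$ exist with the stated eigenvalues and that the fibre subspaces are $B_{n-1}(q,t)$-stable. This reduces to understanding the action of $s_1,\dots,s_{n-2}$ on the $m_T$ modulo lower terms, a bounded computation with Enyang's relations; everything else in the proof is bookkeeping with $\Lambda_n$ and updown tableaux.
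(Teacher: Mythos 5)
The paper does not actually prove \cref{branching}: it is imported wholesale from Wenzl \cite{WEN}, whose own argument proceeds by inductively constructing the irreducible representations of the tower via conditional expectations and the Markov trace (equivalently, via quantum-group Schur--Weyl duality). Your cellular route through Enyang's Murphy basis is therefore a genuinely different approach, and its skeleton is sound: the forgetful map $T^{\operatorname{ud}}_n(\lambda)\to\bigsqcup_{\mu=\lambda\pm\Box}T^{\operatorname{ud}}_{n-1}(\mu)$ is indeed a bijection, and once one knows that $\Res^{B_n(q,t)}_{B_{n-1}(q,t)}\Delta_n(\lambda,f)$ carries a filtration with subquotients $\Delta_{n-1}(\mu,g)$, one for each $\mu=\lambda\pm\Box$, semisimplicity (\cref{semibywen}) splits it and identifies cell modules with simples, giving multiplicity-freeness. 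The weakness is that essentially the entire content of the theorem sits in the step you defer: the cell-filtration statement is not a formal consequence of the existence of a basis indexed by updown tableaux, since the ``remove a box'' layers involve the ideal generated by $e_{n-1}$ and the triangularity of the $s_1,\dots,s_{n-2}$-action has to be checked in that setting. This is precisely Enyang's restriction theorem; as written you assert it rather than prove or cite it, so either carry out that computation or invoke the relevant result of \cite{EnyangMurphyBasisold} explicitly.

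Two further cautions on your ``concrete'' seminormal variant. Within this paper's logical order, the eigenbasis $f_T$ of \cref{JMactseminormal} is obtained from the Gelfand--Zeitlin machinery (\cref{eqcharofgz} together with $C(B_n(q,t),B_{n-1}(q,t))\subseteq GZ(n)$), which itself is proved using \cref{branching}; so to avoid circularity you must construct the $f_T$ independently, e.g.\ by Mathas-style interpolation idempotents using the triangular action of the $x_i$ on the $m_T$ and the fact that the tuples $(c_T(1),\dots,c_T(n))$ separate updown tableaux for generic $q,t$. Moreover, the stated reason that $\operatorname{span}\{f_T:\,T_{n-1}=\mu\}$ is $B_{n-1}(q,t)$-stable (that $c_T(1),\dots,c_T(n-1)$ depend only on the truncation) is not by itself a proof; the correct argument is that this span is the $x_n$-eigenspace for the eigenvalue determined by $\mu$ (distinct $\mu$ give distinct values of $c_T(n)$ for generic parameters) and that $x_n$ centralizes $B_{n-1}(q,t)$, so its eigenspaces are $B_{n-1}(q,t)$-submodules; identifying each such eigenspace with $L(\mu)$ then requires an induction on $n$ (matching the $B_{n-1}$-spectrum of the JM elements), not merely a dimension count.
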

\begin{ex}
    Let $n=5$, $(1,(2,1))\in \Lambda_5$. Then as a $B_{4}(q,t)$ representation it decomposes as
    \[
    \Res_{B_4(q,t)}^{B_5(q,t)} L((2,1))=L((2,2))\oplus L((3,1)) \oplus L((2,1,1))\oplus L((1,1))\oplus L((2)).
    \]
\end{ex}
\par By \cref{semibywen} and \cref{branching}, we have a tower of semisimple $\C$-algebras
\[
B_1(q,t)\subseteq B_2(q,t)\subseteq \ldots \subseteq B_{n-1}(q,t)\subseteq B_n(q,t)\subseteq\ldots
\]
of which the branching graph is defined as follows. It is the leveled graph with vertex set at level $n$ being $\Lambda_n$ and two vertices $(\mu,l)\in \Lambda_{n-1},(\lambda,f)\in \Lambda_n$ are connected by an edge if and only if $\mu=\lambda\pm \Box$. Paths to $(\lambda,f)\in \Lambda_n$ are given by $T^{\operatorname{ud}}_n(\lambda)$. For every irreducible $B_n(q,t)$ module, there is a canonical basis obtained by iterative restrictions of this module to $B_1(q,t)\cong \C$, for which all irreducible modules are one dimensional. For $\lambda\vdash n-2f$, this basis is indexed by paths $T\in T^{\operatorname{ud}}_n(\lambda)$. Denote it by $\{v_T \; | \; T\in T^{\operatorname{ud}}_n(\lambda)\}$ and call it the Gelfand-Zeitlin basis for $L(\lambda)$.
\begin{ex}
    The first 4 levels of the above branching graph are displayed below.
    \ytableausetup{boxsize=0.8em}
\begin{center}\scalebox{0.7}{
\begin{tikzpicture}[yscale=2, xscale=2]
\node (0) at (0,0) {$\emptyset$};
\node (1) at (0,-1) {$\ydiagram{1}$};
\node (11) at (1,-2) {$\ydiagram{1,1}$};
\node (111) at (2,-3) {$\ydiagram{1,1,1}$};
\node (1111) at (2.5,-4) {$\ydiagram{1,1,1,1}$};
\node (2) at (-1,-2) {$\ydiagram{2}$};
\node(00) at (0,-2) {$\emptyset$};
\node (001) at (0,-3) {$\ydiagram{1}$};
\node (21) at (1,-3) {$\ydiagram{2,1}$};
\node (3) at (-1.5,-3) {$\ydiagram{3}$};
\node (211) at (1.1,-4.1) {$\ydiagram{2,1,1}$};
\node (31) at (-1.5,-4) {$\ydiagram{3,1}$};
\node (4) at (-2.5,-4) {$\ydiagram{4}$};
\node (22) at (1.8,-4) {$\ydiagram{2,2}$};
\node (0002) at (-0.5,-4) {$\ydiagram{2}$};
\node (00011) at (0.4,-4) {$\ydiagram{1,1}$};
\node (0000) at (0,-4) {$\emptyset$};
\draw
 (0) to (1) 
 (1) to (11)
 (1) to (00)
 (1) to (2)
 (00) to (001)
 (001) to (0002)
 (001) to (00011)
 (001) to (0000)
 (3) to (0002)
 (111) to (00011)
 (21) to (00011)
 (21) to (0002)
 (2) to (001)
 (2) to (3)
 (2) to (21)
 (3) to (4)
 (3) to (31)
 (11) to (001)
 (11) to (21)
 (11) to (111)
 (21) to (31)
 (21) to (211)
 (21) to (22)
 (111) to (211)
 (111) to (1111);
\end{tikzpicture}}
\end{center}
\end{ex}
\begin{defi} \label{GZalgebra}
    The Gelfand-Zeitlin algebra of the multiplicity free tower of algebras
    \[
    B_1(q,t)\subseteq B_2(q,t)\subseteq \ldots \subseteq B_{n-1}(q,t)\subseteq B_n(q,t)\subseteq\ldots
    \]
    is the algebra 
    \[
    GZ(n)=\langle Z(1),\ldots,Z(n)\rangle
    \]
    where $Z(i)=Z(B_i(q,t))$ is the center of $B_i(q,t)$.
\end{defi}
We make some observations for $GZ(n)$. It is a commutative subalgebra of $B_n(q,t)$ and since the tower is multiplicity free, it is a maximal commutative subalgebra. Further, using the Artin-Wedderburn theorem there is an equivalent characterization given as follows.
\begin{prop}\label{eqcharofgz}
The Gelfand-Zeitlin algebra is characterized as follows
    \[
    GZ(n)=\{f\in B_n(q,t) \; : fv_T\in \C v_T, \;  T\in T^{\operatorname{ud}}_n(\lambda), \;  (\lambda,f)\in \Lambda_n\}.
    \]
    That is, it consists of all elements such that in every simple module, the GZ-basis is a simultaneous eigenvector basis. Further, the GZ-basis elements are uniquely determined (up to scalars) by the corresponding eigenvalues.
\end{prop}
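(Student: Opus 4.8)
The plan is to prove the two inclusions using that $B_1(q,t)\subseteq\cdots\subseteq B_n(q,t)$ is a tower of split semisimple $\C$-algebras whose restrictions are multiplicity free (\cref{semibywen} and \cref{branching}). Write $\mathcal{A}(n)$ for the right-hand side. It is clearly a unital subalgebra of $B_n(q,t)$, since the set of operators preserving each line $\C v_T$ is closed under sums, products and scalars.

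First I would show $GZ(n)\subseteq\mathcal{A}(n)$. Fix $(\lambda,f)\in\Lambda_n$ and a path $T=(T_i)_{i=0}^n\in T^{\operatorname{ud}}_n(\lambda)$. By the construction of the Gelfand--Zeitlin basis by iterated restriction, there is a chain of submodules $L(\lambda)=L_n\supseteq L_{n-1}\supseteq\cdots\supseteq L_1\supseteq L_0=\C v_T$, where $L_i$ is the $B_i(q,t)$-submodule of $L_{i+1}$ isomorphic to the simple labelled by $T_i\in\Lambda_i$; this submodule is unique because $L_{i+1}\big|_{B_i(q,t)}$ is multiplicity free. For $z\in Z(i)=Z(B_i(q,t))$, Schur's lemma over the algebraically closed field $\C$ gives a scalar $\widehat{z}(T_i)$ with $z|_{L_i}=\widehat{z}(T_i)\,\operatorname{id}$; since $v_T\in L_0\subseteq L_i$, we get $z\,v_T=\widehat{z}(T_i)\,v_T\in\C v_T$. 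As $Z(1),\ldots,Z(n)$ generate $GZ(n)$ and $\mathcal{A}(n)$ is an algebra, this yields $GZ(n)\subseteq\mathcal{A}(n)$, and moreover each $z\in Z(i)$ acts on $v_T$ by a scalar depending only on $T_i$.

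Next I would compare dimensions. By Artin--Wedderburn, $B_n(q,t)\cong\bigoplus_{(\lambda,f)\in\Lambda_n}\End_\C(L(\lambda))$, and under this isomorphism $\mathcal{A}(n)$ corresponds to the tuples of matrices diagonal in the Gelfand--Zeitlin basis of each $L(\lambda)$, so $\dim_\C\mathcal{A}(n)=\sum_{(\lambda,f)\in\Lambda_n}\dim_\C L(\lambda)$, which equals the number of paths in the branching graph ending at level $n$. For $GZ(n)$, the map sending $z$ to the tuple of its eigenvalues on the $v_T$ is an injective algebra homomorphism $GZ(n)\hookrightarrow\prod_T\C$ (injective since $B_n(q,t)$ is semisimple). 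To see it is onto, note that for each $i$ the central character map $Z(i)\to\C^{\Lambda_i}$, $z\mapsto(\widehat{z}(\mu))_{\mu\in\Lambda_i}$, is an isomorphism (again Artin--Wedderburn: $\dim Z(i)=|\Lambda_i|$ and distinct simples have distinct central characters). Hence the image of $GZ(n)$ in $\prod_T\C$ contains every function $T\mapsto\phi(T_i)$ with $\phi\colon\Lambda_i\to\C$, in particular the indicators $T\mapsto\delta_{T_i,\mu}$; multiplying these over $i=1,\ldots,n$ produces the indicator of each single path, so the image is all of $\prod_T\C$. Therefore $\dim_\C GZ(n)=\dim_\C\mathcal{A}(n)$, and combined with the first inclusion, $GZ(n)=\mathcal{A}(n)$.

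Finally, for the uniqueness statement: the computation above shows that already the tuple $(\widehat{z}(T_i))_{i=1}^n$ of central characters at levels $1,\ldots,n$ determines each $T_i$, hence the whole path $T$, so distinct basis vectors $v_T$ afford distinct $GZ(n)$-eigenvalue tuples; since $\{v_T\}$ is a basis of $L(\lambda)$ on which $GZ(n)$ acts diagonally with pairwise distinct eigenvalue tuples, each $v_T$ spans its joint eigenspace and is thus determined up to a scalar by its eigenvalues. The step I expect to be the main obstacle is pinning down $\dim_\C GZ(n)$ as the number of paths; this is precisely where the multiplicity-free hypothesis is essential, both to index the Gelfand--Zeitlin basis by paths and to ensure that the sequence of central characters along the tower reconstructs the path.
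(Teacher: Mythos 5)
Your proof is correct: the paper states this proposition without a written proof, treating it as the standard Artin--Wedderburn/Okounkov--Vershik fact for a multiplicity-free tower of split semisimple algebras, and your argument is exactly that standard argument written out. In particular, the inclusion $GZ(n)\subseteq\mathcal{A}(n)$ via Schur's lemma along the unique restriction chain, and the dimension count identifying the image of $GZ(n)$ in $\prod_T\C$ with all of it by multiplying central idempotents from each level to produce path indicators, together give precisely the characterization (and the uniqueness of $v_T$ up to scalars) that the paper asserts.
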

The algebras $GZ(i)$ also form a tower of algebras. We now show that the Jucys Murphy elements of $B_n(q,t)$ together with their inverses defined in \cref{jmbmw} are in $GZ(n)$. This will follow from the following general observation.
\begin{lem}\label{centralizerinGZ}
    \[
    C(B_n(q,t),B_{n-1}(q,t))\subseteq GZ(n)
    \]
    for all $n$.
\end{lem}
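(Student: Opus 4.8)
The plan is to prove the inclusion $C(B_n(q,t),B_{n-1}(q,t))\subseteq GZ(n)$ by an induction on $n$, exploiting the characterization of $GZ(n)$ in \cref{eqcharofgz} together with the multiplicity-freeness of the branching from \cref{branching}. First I would set up the notation: fix a simple module $L(\lambda)$ with $(\lambda,f)\in\Lambda_n$ and its GZ-basis $\{v_T\mid T\in T^{\operatorname{ud}}_n(\lambda)\}$, constructed by iterated restriction. Recall that this basis is obtained by choosing, compatibly with the tower, decompositions $\Res^{B_k}_{B_{k-1}}L(T_k)=\bigoplus_{\mu}L(\mu)$; because \cref{branching} says each such restriction is multiplicity free, the isotypic components are honest simple submodules, canonically determined. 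So the GZ-basis vector $v_T$ spans the line in $L(\lambda)$ cut out by the flag of simple $B_k$-submodules $L(T_n)\supseteq L(T_{n-1})\supseteq\cdots\supseteq L(T_1)$, each appearing with multiplicity one.

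Now take $z\in C(B_n(q,t),B_{n-1}(q,t))$. The key observation is that $z$ preserves every $B_{n-1}(q,t)$-submodule of $L(\lambda)$: indeed, for such a submodule $M$ and $b\in B_{n-1}(q,t)$, $b(zm)=z(bm)\in zM$, so $zM\subseteq M$. In particular $z$ preserves the multiplicity-free isotypic decomposition $\Res^{B_n}_{B_{n-1}}L(\lambda)=\bigoplus_{\mu=\lambda\pm\Box}L(\mu)$, and on each summand $L(\mu)$ it is a $B_{n-1}(q,t)$-module endomorphism, hence a scalar by Schur's lemma (we are in the semisimple setting, working over $\C$ with generic parameters, so $\End_{B_{n-1}}(L(\mu))=\C$). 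Therefore $z$ acts as a scalar on each $L(\mu)$. Iterating: restrict further to $B_{n-2}$, where each $L(\mu)$ again decomposes multiplicity-freely; but $z$ already acts by a scalar on $L(\mu)$, so certainly it acts by scalars on every $B_k$-submodule of $L(\mu)$ for $k\le n-1$, and in particular on each line of the flag defining a GZ-basis vector. Concretely, for each $T\in T^{\operatorname{ud}}_n(\lambda)$ the vector $v_T$ lies in a one-dimensional intersection of $z$-stable subspaces on which $z$ is scalar, so $zv_T\in\C v_T$. By \cref{eqcharofgz} this means $z\in GZ(n)$, which is exactly what we want, and since $\lambda$ was arbitrary we are done.

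The one point that needs a little care — and I expect it to be the main (if modest) obstacle — is making rigorous the claim that $z$ acts by a scalar on each $v_T$, not merely on each $L(\mu)$ in the first restriction. The clean way is to argue directly: $z$ commutes with all of $B_{n-1}(q,t)$, which contains the subalgebras $B_k(q,t)$ for $k\le n-1$; hence $z$ preserves every $B_k(q,t)$-submodule of $L(\lambda)$ for all $k\le n-1$. The GZ line $\C v_T$ is (by construction of the GZ-basis) the intersection over $k=1,\dots,n$ of the chosen simple $B_k$-submodules through $v_T$; each of these for $k\le n-1$ is $z$-stable, and the top one ($k=n$, the whole module) is trivially $z$-stable. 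Intersecting $z$-stable subspaces gives a $z$-stable subspace, and a one-dimensional $z$-stable subspace forces $zv_T\in\C v_T$. This avoids any delicate commutation identity and only uses the linear algebra of invariant subspaces plus \cref{branching} and \cref{eqcharofgz}. One should also note the inclusion is actually trivial in the base case $n=1$, where $B_0(q,t)=\C=B_1(q,t)$ and $GZ(1)=\C$.
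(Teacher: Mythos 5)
Your main line of argument is exactly the paper's: use the characterization of $GZ(n)$ from \cref{eqcharofgz}, observe that an element $z$ centralizing $B_{n-1}(q,t)$ gives a $B_{n-1}(q,t)$-equivariant endomorphism of $\Res^{B_n(q,t)}_{B_{n-1}(q,t)}L(\lambda)$, apply the multiplicity-free branching of \cref{branching} together with Schur's lemma to conclude that $z$ acts by a scalar on each summand $L(\mu)$, and then note that $v_T$ lies in the summand $L(T_{n-1})$, so $zv_T\in\C v_T$. Two of your side-justifications are shaky, though neither is actually needed. First, ``$b(zm)=z(bm)\in zM$, so $zM\subseteq M$'' is a non sequitur: it only shows that $zM$ is again a $B_{n-1}(q,t)$-submodule, not that it sits inside $M$; the correct reason $z$ preserves each $L(\mu)$ is that an equivariant endomorphism preserves isotypic components, and by multiplicity-freeness these are precisely the simple summands. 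Second, the blanket claim in your ``clean way'' that $z$ preserves \emph{every} $B_k(q,t)$-submodule of $L(\lambda)$ for all $k\le n-1$ is false in general: for $k<n-1$ the restriction to $B_k(q,t)$ can have multiplicities, and a $B_k(q,t)$-equivariant operator need not preserve a diagonally embedded copy of a simple inside an isotypic component on which $z$ has two distinct eigenvalues coming from two different $B_{n-1}(q,t)$-summands. What is true, and what the Gelfand-Zeitlin construction actually provides, is that each flag member $L(T_k)$ is the canonical simple summand of the multiplicity-free restriction of $L(T_{k+1})$, so stability of the flag follows by downward induction along the chain; but, as your own second paragraph already shows, the single step at level $n-1$ suffices, exactly as in the paper.
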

\begin{proof}
    We use the equivalent characterization of the GZ algebra given in \cref{eqcharofgz}. Let $L(\lambda)$ be the simple corresponding to $\lambda\vdash n-2f$ and $x\in B_n(q,t)$ such that $xf=fx$, for any $f\in B_{n-1}(q,t)$. Hence, we get that
    \[
    x\in \End_{B_{n-1}(q,t)}(\Res_{B_{n-1}(q,t)}^{B_n(q,t)}(L(\lambda))).
    \]
    Using \cref{branching} and Schur's lemma, we get
    \[
    x\in \bigoplus\limits_{\mu=\lambda\pm \Box} \End_{B_{n-1}(q,t)}(L(\mu))\cong \bigoplus\limits_{\mu=\lambda \pm \Box} \C_{\mu}.
    \]
    For $T=(T_i)_{i=0}^n\in T_n^{\operatorname{ud}}(\lambda)$, we have $v_T\in L(T_{n-1})$. Since $x$ acts on any simple $B_{n-1}(q,t)$ module appearing in the decomposition of $L(\lambda)$ as a scalar, we get that $xv_T\in \C v_T$, proving that $x$ is diagonal in the GZ basis. Hence $x\in GZ(n)$.
\end{proof}
We have already observed that the Jucys Murphy elements satisfy 
\[
x_i\in C(B_i(q,t),B_{i-1}(q,t)),\]
for all $i=1,\ldots,n$. 
\begin{cor}\label{jminsideGZ}
    The algebra generated by the Jucys-Murphy elements together with their inverses is a subalgebra of the Gelfand-Zeitlin algebra
    \begin{equation*}
        \langle x_1^{\pm 1},\ldots,x_n^{\pm 1}\rangle\subseteq GZ(n).
    \end{equation*}
\end{cor}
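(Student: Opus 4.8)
\textit{Proof proposal.} This is a direct consequence of the preceding lemma together with the observation recorded just before it. The plan is as follows.

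First I would fix $i\in\{1,\ldots,n\}$ and recall that $x_i\in C(B_i(q,t),B_{i-1}(q,t))$; this was already established from the recursion $x_1=t$, $x_{i+1}=s_ix_is_i$ and the fact that $s_i$ commutes with $B_{i-1}(q,t)$ when it is placed in the appropriate position (more precisely, $x_{i+1}$ centralizes $B_i(q,t)$ inside $B_{i+1}(q,t)$). Applying the lemma $C(B_i(q,t),B_{i-1}(q,t))\subseteq GZ(i)$ then gives $x_i\in GZ(i)$. Since the algebras $GZ(i)$ form a tower, $GZ(i)\subseteq GZ(n)$, so $x_i\in GZ(n)$ for every $i$.

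Next I would deal with the inverses. Each $x_i$ is invertible in $B_n(q,t)$: indeed $x_1=t\in\C^{*}$ is invertible and $x_{i+1}=s_ix_is_i$ is a product of invertible elements, so by induction $x_i\in B_n(q,t)^{\times}$. Since a centralizer subalgebra is closed under taking inverses of its invertible elements, $x_i^{-1}\in C(B_i(q,t),B_{i-1}(q,t))\subseteq GZ(n)$ by the same argument as above. (Equivalently, one can invoke \cref{eqcharofgz}: $x_i$ acts on each Gelfand--Zeitlin basis vector $v_T$ by the nonzero scalar $c_T(i)$, hence $x_i^{-1}$ acts diagonally in the same basis and so lies in $GZ(n)$.) Finally, since $GZ(n)$ is a subalgebra of $B_n(q,t)$ and contains all the $x_i$ and $x_i^{-1}$, it contains the subalgebra they generate, which is exactly the claimed inclusion $\langle x_1^{\pm1},\ldots,x_n^{\pm1}\rangle\subseteq GZ(n)$.

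There is essentially no obstacle here: the content of the statement is carried entirely by the lemma, and the only point requiring a word of care is the invertibility of the $x_i$, which is immediate from the recursion.
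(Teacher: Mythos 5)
Your proposal is correct and follows exactly the route the paper intends: the paper states this corollary without proof as an immediate consequence of the lemma $C(B_i(q,t),B_{i-1}(q,t))\subseteq GZ(i)$, the observation $x_i\in C(B_i(q,t),B_{i-1}(q,t))$, and the fact that the $GZ(i)$ form a tower. Your extra care about the inverses (invertibility of the $x_i$ from the recursion and closure of the centralizer, or equivalently diagonal action with nonzero eigenvalues $c_T(i)$) fills in precisely the detail the paper leaves implicit.
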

\begin{rem}
    An easier proof of \cref{jminsideGZ} is given by the following simpler observation. For any $k\leq n$, the element $x_1\cdots x_k\in Z(k)$ and $(x_1\cdots x_{k-1})^{-1}\in Z(k-1)$, so their product $x_k\in GZ(n)=\langle Z(1),\ldots,Z(n)\rangle$. One can always use this trick with multiplicative or additive families of JM elements.
\end{rem}
This implies that for every simple $B_n(q,t)$-module $L(\lambda)$, there is a simultaneous eigenvector basis $\{v_T \; | \; T\in T_n^{\operatorname{ud}}(\lambda)\}$ of $x_1,\ldots,x_n$. We aim to show that the eigenvalues of $x_1,\ldots,x_n$ in this basis uniquely characterize $L(\lambda)$. First we have to identify the possible eigenvalues of the $x_i$'s in $L(\lambda)$. To do so, we use cellular theory. 
\subsection{Spectrum of Jucys-Murphy elements.}
\par Enyang in \cite{EnyangMurphyBasisold} constructed a Murphy basis $\{m_T \; | \; T\in T_n^{\operatorname{ud}}(\lambda)\}$ for every cell module $\Delta(\lambda,f)=L(\lambda)$ of $B_n(q,t)$. The action of $x_i$ in this basis is given by an upper triangular matrix with diagonal entries $\{c_T(i) \; | \; T\in T^{\operatorname{ud}}_n(\lambda)\}$, where $c_T(i)$ is as in \cref{qtcont}. This implies that the eigenvalues of $x_i$ are exactly all $c_T(i)$, for $T\in T^{\operatorname{ud}}_n(\lambda)$. Since the family $x_1^{\pm 1},\ldots,x_n^{\pm 1}$ is simultaneously diagonalizable, there exists a simultaneous eigenvector basis $\{f_T \; | \; T\in T^{\operatorname{ud}}_n(\lambda)\}$ of $\Delta(\lambda,f)=L(\lambda)$, such that 
\[
x_if_T=c_T(i)f_T,
\]
for every $i=1,\ldots,n$ and $T\in T_n^{\operatorname{ud}}(\lambda)$. We also call the basis $\{f_T \; |\; T\in T_n^{\operatorname{ud}}(\lambda)\}$ the Murphy basis of $L(\lambda)$.
\begin{lem}\label{JMactseminormal}
    In the basis $\{f_T \; | \; T\in T^{\operatorname{ud}}_n(\lambda)\}$ described above, the action of the Jucys-Murphy elements is given by
    \begin{equation*}
        x_if_T=c_T(i)f_T \; , \quad i=1,\ldots,n.
    \end{equation*}
\end{lem}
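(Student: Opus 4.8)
The plan is to read the statement off the discussion that immediately precedes it, making its two inputs explicit and combining them: Enyang's description of the $x_i$-action on the Murphy basis of the cell module, together with the simultaneous diagonalizability of $x_1^{\pm1},\dots,x_n^{\pm1}$. Throughout write $\mathcal X=\langle x_1^{\pm1},\dots,x_n^{\pm1}\rangle\subseteq GZ(n)$, a commutative subalgebra, and note that in the present (semisimple) range $\Delta(\lambda,f)=L(\lambda)$, so that cell-module computations apply to the simple modules verbatim.

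First, by \cite{EnyangMurphyBasisold}, every $x_i$ acts on the Murphy basis $\{m_T : T\in T^{\operatorname{ud}}_n(\lambda)\}$ of $L(\lambda)$ by an upper-triangular matrix with respect to the Rui--Si order $\succeq$, with diagonal entry $c_T(i)$ in the slot of $m_T$; that is,
\[
x_i m_T=c_T(i)\,m_T+\sum_{S\succ T}a^{(i)}_{S,T}\,m_S,\qquad a^{(i)}_{S,T}\in\C .
\]
Since this holds simultaneously for all $i$ in the same basis, the subspaces $\operatorname{span}\{m_S : S\succeq T\}$ are $\mathcal X$-submodules of $L(\lambda)$; refining $\succeq$ to a total order therefore exhibits a composition series of $L(\lambda)$ as an $\mathcal X$-module whose factors are the one-dimensional modules $\C_T$ on which each $x_i$ acts by the scalar $c_T(i)$, one for each $T\in T^{\operatorname{ud}}_n(\lambda)$.

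Second, I would record that $\mathcal X$ is semisimple: it is a subalgebra of $GZ(n)$, which is the image of the natural multiplication map $\bigotimes_{i=1}^n Z(B_i(q,t))\to B_n(q,t)$, well defined because the $Z(B_i(q,t))$ pairwise commute, and each $Z(B_i(q,t))$, being the center of a split semisimple $\C$-algebra, is a product of copies of $\C$; hence so is $GZ(n)$, and therefore so is $\mathcal X$. (This is precisely the simultaneous diagonalizability of $x_1^{\pm1},\dots,x_n^{\pm1}$ recalled above.) Consequently $L(\lambda)$ is a semisimple $\mathcal X$-module, the composition series splits, and
\[
L(\lambda)\;\cong\;\bigoplus_{T\in T^{\operatorname{ud}}_n(\lambda)}\C_T
\]
as $\mathcal X$-modules. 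Taking $f_T$ to be any nonzero vector of the summand $\C_T$ yields a basis $\{f_T : T\in T^{\operatorname{ud}}_n(\lambda)\}$ of $L(\lambda)$ with $x_if_T=c_T(i)f_T$ for every $i$, which is the assertion.

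The one subtlety, and the point where a careless argument would break, is that a semisimple module only determines its isotypic pieces, so in order to index the summands --- hence the basis --- by updown tableaux one must check that $T\mapsto(c_T(1),\dots,c_T(n))$ is injective on $T^{\operatorname{ud}}_n(\lambda)$. For generic $(q,t)$ this is clear: $c_T(k)$ records both the diagonal of the box changed at the $k$-th step of $T$ and whether that step is an addition (value $tq^{2i}$) or a removal (value $t^{-1}q^{-2i}$), and since each partition has at most one addable and at most one removable box on a given diagonal, the content sequence can be decoded into $T$ one step at a time. Apart from this bookkeeping, the proof is just Enyang's theorem plus the semisimplicity of $GZ(n)$, both already available.
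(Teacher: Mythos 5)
Your proof is correct and takes essentially the same route as the paper, which derives the lemma from the discussion immediately preceding it rather than a displayed proof: Enyang's upper-triangular action of the $x_i$ on the Murphy basis with diagonal entries $c_T(i)$, combined with the simultaneous diagonalizability of $\langle x_1^{\pm 1},\ldots,x_n^{\pm 1}\rangle\subseteq GZ(n)$ on each simple module. Your filtration/semisimplicity bookkeeping and the injectivity check (which the statement itself does not actually require) merely make explicit steps the paper leaves implicit.
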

\begin{rem}
    It is worth noting that we do not yet know if the basis $\{f_T \; | \; T\in T_n^{\operatorname{ud}}(\lambda)\}$ is the GZ basis for $L(\lambda)$. That is because, even though with respect to this basis the algebra generated by the Jucys-Murphy elements together with their inverses is simultaneously diagonalized, this does not mean that all elements of $GZ(n)$ are simultaneously diagonalized with respect to $\{f_T \; | \; T\in T^{\operatorname{ud}}_n(\lambda)\}$.
\end{rem}
\section{The Affine Birman-Murakami-Wenzl algebras.}
\par Even though the Jucys-Murphy elements hold important information about the structure of $B_n(q,t)$, the algebra does not naturally see them as special elements there. That is, they are not naturally in a basis of $B_n(q,t)$, or, the diagrammatics of $B_n(q,t)$ do not include them as special diagrams.
\par Suppose $q,t\in \C$ are generic. In \cite{DRV}, the authors work with the affine version of $B_n(q,t)$, an algebra which can be thought of as an enlargement of $B_n(q,t)$ such that its diagrammatics naturally include the Jucys-Murphy elements as new affine generators. For more information on a categorical and diagrammatic intuition for this algebra, see \cite{MYSA}.
\begin{defi}\cite{DRV}\label{afbmwdef}
    The $n$-th affine BMW algebra $aB_n(q,t)$ is the algebra generated by invertible $s_1,\ldots,s_{n-1}$ (corresponding to positive crossings) modulo the BMW relations, plus an affine generator $x$ such that if $x_1=tx$ and $x_{i+1}=s_ix_is_i$, for $i=1,\ldots,n-1$, the following relations hold:
    \begin{align*}
        & x_1x_2=x_2x_1, \\
        & [x,s_i]=0 \quad \forall i\geq 2, \\
        & x_ix_{i+1}s_i=s_ix_ix_{i+1}, \; \forall i\in\{1,\ldots,n-1\}, \\
        & x_ix_{i+1}e_i=e_ix_ix_{i+1}, \; \forall i\in \{1,\ldots,n-1\}, \\
        & e_1x_1^le_1=Z_0^{(l)}e_1, 
    \end{align*}
    where for $l\in \Z$, $Z_0^{(l)}\in \C$.
\end{defi}
\begin{rem}
   Some of the above relations might be redundant, like the commutation with $s_i$, but we include the relation to be as close as possible to the diagrammatic intuition of \cite{MYSA}.
\end{rem}
\begin{rem}
    One should think of $B_n(q,t)$ as the algebra occurring via evaluating the affine generators of $aB_n(q,t)$ to the Jucys-Murphy elements of $B_n(q,t)$. While this is not automatically the case when we restrict to subalgebras of $aB_n(q,t)$, one should naturally ask the question if this is indeed the case for the center.
\end{rem}
Taking advantage of a basis theorem for $aB_n(q,t)$ proved in \cite{GH}, the authors of \cite{DRV} calculated its center algebraically. Note that we present their result over $\C$, but their result holds more generally for any commutative ring satisfying certain admissibility conditions.
\begin{thm}\cite{DRV}\label{cenafbmw}
    For $q,t\in \C$ generic, the center of $aB_n(q,t)$ over $\C$ is equal to 
    \[
    C:=Z(aB_n(q,t))=\{p\in \C[x_1^{\pm},\ldots,x_n^{\pm}]^{S_n}:p(x_1,x_1^{-1},\ldots,x_n)=p(1,1,\ldots,x_n)\}.
    \]
\end{thm}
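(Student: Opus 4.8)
\emph{Proposed approach.} The plan is to combine the basis theorem for $aB_n(q,t)$ of \cite{DRV} with a generating-series computation, in the spirit of the Bernstein presentation of affine Hecke algebras. Write $\mathcal{A}=aB_n(q,t)$ and let $P=\C[x_1^{\pm},\ldots,x_n^{\pm}]\subseteq\mathcal{A}$ be the commutative subalgebra generated by the Jucys--Murphy elements and their inverses; one must prove the two inclusions $C\subseteq Z(\mathcal{A})$ and $Z(\mathcal{A})\subseteq C$. For the first inclusion it suffices to exhibit an algebra generating set of $C$ consisting of central elements, so I would first fix a convenient one: the coefficients $E_m(x_1,\ldots,x_n)$ of the series
\begin{equation*}
F(u)=\prod_{j=1}^{n}\frac{1-x_j^{-1}u}{1-x_ju}\in P[[u]],
\end{equation*}
possibly together with $(x_1\cdots x_n)^{-1}$. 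That each $E_m$ lies in $C$ is immediate, since the factor of $F(u)$ attached to a pair of indices $\{a,b\}$ collapses to $1$ both under $x_b\mapsto x_a^{-1}$ and under $x_a=x_b=1$; that these elements generate $C$ is a short combinatorial argument with monomial symmetric functions. To see $F(u)$ is central it is enough that it commute with each $s_i$, each $e_i$, and with the affine generator $x=t^{-1}x_1$, the last being clear as $F(u)\in P$. Since the factors of $F(u)$ indexed by $j\neq i,i+1$ commute with $s_i$ and $e_i$ by locality, everything reduces to the local identities
\begin{equation*}
R_i(u)\,s_i=s_i\,R_i(u),\qquad e_i\,R_i(u)=R_i(u)\,e_i,\qquad R_i(u):=\frac{(1-x_i^{-1}u)(1-x_{i+1}^{-1}u)}{(1-x_iu)(1-x_{i+1}u)},
\end{equation*}
which I would establish by clearing denominators and pushing $s_i$, resp.\ $e_i$, past monomials in $x_i,x_{i+1}$, using $x_{i+1}=s_ix_is_i$, the relations $x_ix_{i+1}s_i=s_ix_ix_{i+1}$ and $x_ix_{i+1}e_i=e_ix_ix_{i+1}$, the Kauffman--Skein relation, and the loop relations $e_is_i^{\pm1}=t^{\mp1}e_i$; the $e_i$-identity is where the admissibility constants $Z_0^{(l)}$ enter through $e_1x_1^le_1=Z_0^{(l)}e_1$, and the reciprocal-symmetric shape of the $\{i,i+1\}$-block of $R_i(u)$ is exactly what forces all correction terms to cancel.

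\emph{The reverse inclusion.} Let $z\in Z(\mathcal{A})$. First I would show $z\in P$. By the basis theorem, $\mathcal{A}$ has a basis of normal forms ``monomial in $x_1,\ldots,x_n$ times a chosen lift of a Brauer diagram''; expanding $z$ in this basis and using $[z,x_i]=0$ for all $i$, a leading-term argument — filtering by the number of cups and caps and by the defect of the underlying permutation — forces every basis element occurring in $z$ to be a pure Laurent monomial in the $x_i$, so $P$ is self-centralizing in $\mathcal{A}$ and $z\in P$. Then I would run the computations above in reverse: commutation of $z\in P$ with all $s_i$, analysed term-by-term via the $R_i(u)$-relation, forces $z$ to be symmetric and, through the surviving correction terms, to obey the cancellation condition; commutation with the $e_i$ either follows from the already established $C\subseteq Z(\mathcal{A})$, or is read off directly from $e_1ze_1$ computed two ways using $e_1x_1^le_1=Z_0^{(l)}e_1$, again yielding $z(x_1,x_1^{-1},x_3,\ldots,x_n)=z(1,1,x_3,\ldots,x_n)$. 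Hence $z\in C$.

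\emph{Expected main obstacle.} The crux is the interaction of the $e_i$'s with $P$ in both directions: establishing the capped identity $e_iR_i(u)=R_i(u)e_i$, and dually extracting the cancellation condition from $[z,e_i]=0$, both require carefully tracking how powers of $x_i,x_{i+1}$ slide across $e_i$ — the point where the constants $Z_0^{(l)}$ and the defining equation of $C$ meet. A secondary but genuine difficulty is that, unlike in the affine Hecke algebra, commutation of $s_i$ with symmetric Laurent polynomials in the $x_i$ is \emph{not} automatic (for instance $x_i+x_{i+1}$ does not commute with $s_i$), so even the $s_i$-part of the argument needs the full wheel structure together with careful bookkeeping of powers of $q$ and $t$. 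Finally, the reduction $z\in P$ depends essentially on having the explicit normal form supplied by the basis theorem for $aB_n(q,t)$; without it the argument has no starting point.
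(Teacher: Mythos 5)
First, a point of reference: the paper does not prove \cref{cenafbmw} at all — it is quoted from \cite{DRV}, and the paper only records that the proof there is ``completely algebraic and elementary, using only a basis theorem for $aB_n(q,t)$.'' Measured against that cited argument, your overall architecture is of the right kind: use the normal form from the basis theorem to force $Z(aB_n(q,t))\subseteq \C[x_1^{\pm},\ldots,x_n^{\pm}]$, then extract symmetry from commutation with the $s_i$ and the cancellation condition from commutation with the $e_i$ (via $e_1x_1^le_1=Z_0^{(l)}e_1$ and $x_1x_2e_1$-type relations), and you correctly flag the BMW-specific difficulty that $s_i$ does not commute with arbitrary symmetric functions of $x_i,x_{i+1}$.

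The genuine gap is in your proof of $C\subseteq Z(aB_n(q,t))$. You reduce it to centrality of the coefficients of $F(u)$ together with $(x_1\cdots x_n)^{-1}$, which only suffices if these elements generate $C$ as an algebra; you dismiss this as ``a short combinatorial argument with monomial symmetric functions,'' but it is a substantive unproved claim, and it is not available in this paper either: \cref{powersumelemgenthesame} only shows $\C[e_n^{\pm1}][w_1,w_2,\ldots]=\C[e_n^{\pm1}][p_1^-,p_2^-,\ldots]$, and the identification of this subalgebra with the full wheel algebra is obtained only after evaluation in $B_n(q,t)$, as a consequence of the center theorem (\cref{evaluationlevelitholds}) — so invoking it at the affine level is both unproven and circular in spirit. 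The argument in \cite{DRV} needs no generation statement: one establishes commutator formulas for $[s_i,f]$ and $[e_i,f]$ with $f$ an arbitrary Laurent polynomial (Bernstein--Lusztig-type identities with $e_i$-correction terms) and reads off that these vanish precisely when $f$ is symmetric and satisfies the wheel condition, which yields both inclusions at once after the reduction $Z\subseteq\C[x_1^{\pm},\ldots,x_n^{\pm}]$. Relatedly, the two local identities $R_i(u)s_i=s_iR_i(u)$ and $e_iR_i(u)=R_i(u)e_i$, and the leading-term argument behind $Z\subseteq\C[x_1^{\pm},\ldots,x_n^{\pm}]$, are exactly where the content of the theorem lives and are only asserted in your sketch; and the aside that commutation of a central $z$ with the $e_i$ ``follows from the already established $C\subseteq Z(\mathcal{A})$'' is circular, since knowing that elements of $C$ are central gives no information about an arbitrary central element. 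As it stands, the proposal is a plausible outline whose two load-bearing steps — the generation of $C$ and the commutation identities — are missing.
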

\par We will refer to symmetric Laurent polynomials satisfying the above condition as wheel polynomials.
 Parallel to the study of (super)symmetric polynomials, there is a family analogous to that of the power sum polynomials, given by 
\[
p_k^-=\sum\limits_{i=1}^n(x_i^k-x_i^{-k}), \; \quad \text{for any} \; k\in\N.
\]
The above elements are in $C$, that is, they are symmetric Laurent polynomials that satisfy the "wheel condition"
\[
p_k^-(x_1,x_1^{-1},\ldots,x_n)=p(1,1,\ldots,x_n).
\]
It is also immediate to see that the last elementary symmetric polynomial and its inverse are also in $C$, that is,
\[
e_n=x_1\cdots x_n, \quad e_n^{-1}=x_1^{-1}\cdots x_n^{-1} \in C.
\]

\subsection{Symmetric Wheel Laurent polynomials}
\par In this section, we develop a theory for wheel symmetric Laurent polynomials, reminiscent to that of (super)symmetric polynomials.
\par In the case of symmetric and supersymmetric polynomials, we have two standard sets of generators. One is given by the power sum polynomials, while the other one is a set of polynomials which are called elementary (super)symmetric polynomials. We give their definitions below, noting that the latter are usually defined as the coefficients of the formal power series of a rational generating function.
\begin{defi}
    The elementary (super)symmetric polynomials in $\C[x_1,\ldots,x_n]^{S_n} $ (respectively in $\C[x_1,\ldots,x_{2n-1},x_{2n}]^{S_n}$), denoted by ($l_k$) $e_k$, for $k\in \N$ are given as the coefficient of $T^k$ in the formal power series expansion of the following generating functions:
    \begin{align*}
      &  \text{Classical case:} \; F_n(t;x_1,\ldots,x_n)=\prod_{i=1}^n (1-x_iT)=\sum\limits_{i=0}^{\infty}e_kT^k, \\
      &  \text{Super case:} \; F_n(t;x_1,\ldots,x_n)=\frac{\prod_{i=1}^n(1+x_{2i-1}T)}{\prod_{i=1}^n( 1-x_{2i}T)}=\sum_{i=0}^{\infty}l_kT^k.
    \end{align*}
\end{defi}
\begin{rem}
    The definition of the elementary symmetric polynomials we are using is not standard here.
\end{rem}
The above rational functions have a formal power series expansion, since their constant coefficient is non-zero and equal to $1$. We also note that the set of elementary symmetric polynomials is finite while the corresponding set of elementary supersymmetric polynomials is infinite.
\begin{rem}
   The above polynomials are of great importance in the theory of (super)symmetric polynomials for two reasons.
   \begin{enumerate}
       \item They are defined by a generating function which is a rational function in $T$, making them easy to manipulate algebraically. The information of the entire family is packed in a simple rational function.
       \item They generate (super)symmetric polynomials.
   \end{enumerate} 
\end{rem}
\begin{thm}\cite{sspolys}
   The set of (super)symmetric polynomials is generated both by the set of power sum (super)symmetric polynomials and the elementary (super)symmetric polynomials. 
\end{thm}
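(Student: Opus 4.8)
The plan is to treat the classical and the super case separately; in each I would first prove that the elementary family generates the whole ring of (super)symmetric polynomials, and then transfer this to the power sums by the Newton-type identities obtained from the logarithmic derivative of the generating function --- these express $k\,e_k$ (resp.\ $k\,l_k$) as $p_k$ plus a polynomial in lower elementary functions and power sums, and conversely express $p_k$ through the elementary functions, so that, $k$ being invertible in $\C$, the two generating sets span the same subalgebra. Hence it is enough, in each case, to show that the elementary family generates.

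\textbf{Classical case.} I would invoke the fundamental theorem of symmetric polynomials, $\C[x_1,\ldots,x_n]^{S_n}=\C[e_1,\ldots,e_n]$, via the usual double induction on the number of variables and on degree: for symmetric $f$, the specialization $f(x_1,\ldots,x_{n-1},0)$ is symmetric in $n-1$ variables, hence by induction a polynomial in the elementary symmetric polynomials of $x_1,\ldots,x_{n-1}$; lifting this expression and subtracting leaves a symmetric polynomial vanishing at $x_n=0$, hence divisible by $e_n=x_1\cdots x_n$, and one closes by induction on degree.

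\textbf{Super case.} Relabel the variables as $u_1,\ldots,u_p$ (``positive'') and $v_1,\ldots,v_q$ (``negative'') --- it does no harm, and helps the induction, to allow $p\ne q$ --- with generating function $F=\prod_i(1+u_iT)/\prod_j(1-v_jT)$; a supersymmetric polynomial is one that is separately symmetric in the $u$'s and in the $v$'s and satisfies the cancellation condition that $f|_{u_1=-v_1}$ be independent of $u_1$, which is exactly the assertion that the matching factors $(1+u_1T)$ and $(1-v_1T)$ of $F$ cancel; in particular every coefficient $l_k$ of $F$ is supersymmetric. To see that the $l_k$ \emph{generate}, I would pass to the ring $\Lambda$ of symmetric functions (over $\C$, in infinitely many variables) and the homomorphism $\varphi\colon\Lambda\to\C[u_1,\ldots,u_p,v_1,\ldots,v_q]$ determined by $p_k\mapsto \tilde p_k:=(-1)^{k+1}\sum_i u_i^k+\sum_j v_j^k$. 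Comparing $\sum_k h_kT^k=\exp\bigl(\sum_k p_kT^k/k\bigr)$ with $F=\exp\bigl(\sum_k \tilde p_kT^k/k\bigr)$ gives $\varphi(h_k)=l_k$, so that $\varphi(\Lambda)=\C[l_1,l_2,\ldots]=\C[\tilde p_1,\tilde p_2,\ldots]$ and the whole statement reduces to showing that $\varphi$ maps onto the ring of supersymmetric polynomials. The inclusion $\varphi(\Lambda)\subseteq\{\text{supersymmetric polynomials}\}$ is immediate from the cancellation check for each $\tilde p_k$; for surjectivity I would induct on $p+q$ using the cancellation substitution $\rho\colon f\mapsto f|_{u_p=-v_q}$, which carries supersymmetric polynomials in $p+q$ variables to supersymmetric polynomials in $p+q-2$ variables and commutes with $\varphi$. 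Given supersymmetric $f$, the inductive hypothesis writes $\rho(f)=\rho(\varphi(w))$ for some $w\in\Lambda$, so $f-\varphi(w)$ lies in $\ker\rho$, i.e.\ is a supersymmetric polynomial divisible by $\prod_{i,j}(u_i+v_j)$; it remains to treat this kernel case, after which specializing $p=q=n$ recovers the theorem as stated.

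The step I expect to be the main obstacle is exactly that kernel case. The tempting move --- divide by $\prod_{i,j}(u_i+v_j)$ and induct on degree --- does not work, because the cofactor of a supersymmetric polynomial by this product need not itself be supersymmetric, so one has to analyse the ideal of supersymmetric polynomials divisible by $\prod_{i,j}(u_i+v_j)$ and show it is already generated over $\C[l_1,l_2,\ldots]$ by explicit elements (this is precisely the content of the ``fundamental theorem of supersymmetric functions''). Alternatively, I would bypass the kernel step entirely by invoking the Jacobi--Trudi formula $s_\lambda=\det\bigl(l_{\lambda_i-i+j}\bigr)$ for the hook Schur functions $s_\lambda(u/v)$, together with the fact that these form a $\C$-basis of the ring of supersymmetric polynomials: such a basis manifestly lies in $\C[l_1,l_2,\ldots]$, giving the missing inclusion $\{\text{supersymmetric polynomials}\}\subseteq\C[l_1,l_2,\ldots]$ and hence the theorem.
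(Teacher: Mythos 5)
First, a remark on scope: the paper does not prove this statement at all --- it is quoted from \cite{sspolys} (Stembridge's characterization of supersymmetric polynomials) --- so there is no in-paper argument to compare with; your proposal has to stand on its own.

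On its own terms, the classical half and the reduction steps are fine: the double induction for $\C[x_1,\ldots,x_n]^{S_n}=\C[e_1,\ldots,e_n]$, the Newton/logarithmic-derivative identities (valid since we are over $\C$), the map $\varphi\colon\Lambda\to\C[u,v]$ with $\varphi(h_k)=l_k$, the commutation $\rho\circ\varphi=\varphi'$, and the observation that a supersymmetric $f$ with $\rho(f)=0$ is divisible by $\prod_{i,j}(u_i+v_j)$ are all correct. But the one step you flag as ``the main obstacle'' is exactly the mathematical content of the theorem in the super case, and neither of your two ways out closes it. Saying that the kernel analysis ``is precisely the content of the fundamental theorem of supersymmetric functions'' is an admission that the crux is being assumed, and the alternative route is circular: that the hook (super) Schur polynomials $s_\lambda(u/v)$ form a $\C$-basis of the ring of supersymmetric polynomials is equivalent to the surjectivity you are trying to prove (the easy direction is that they lie in $\C[l_1,l_2,\ldots]$ and are linearly independent; the hard direction is that they span all supersymmetric polynomials, which is the theorem). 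So as written the proposal has a genuine gap at the decisive step.

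The gap is fillable with one concrete lemma, which is how Stembridge closes the induction. A supersymmetric $f$ with $\rho(f)=0$ is $\prod_{i,j}(u_i+v_j)\cdot g$ with $g$ merely doubly symmetric (symmetric in the $u$'s and in the $v$'s separately), and conversely every such product is supersymmetric because the cancellation substitution kills the factor $u_1+v_1$; so the kernel equals $\prod_{i,j}(u_i+v_j)$ times the doubly symmetric ring. The factorization lemma (Berele--Regev/Stembridge) states that for any partition $\lambda$ containing the $p\times q$ rectangle one has $s_\lambda(u/v)=\prod_{i,j}(u_i+v_j)\,s_\mu(u)\,s_\nu(v)$, where $\mu,\nu$ are the complementary pieces of $\lambda$ outside the rectangle; since the products $s_\mu(u)s_\nu(v)$ form a basis of the doubly symmetric ring, every element of the kernel is a $\C$-linear combination of images $\varphi(s_\lambda)$, hence lies in $\C[l_1,l_2,\ldots]$, and your induction on $p+q$ goes through. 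Without this lemma (or an equivalent), the super half of the statement is not proved.
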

\par Let us turn our attention back to the algebra of wheel symmetric Laurent polynomials $C$ from \cref{cenafbmw}. Notice that $C$ is exactly the multiplicative analogue of the subalgebra of symmetric polynomials $f\in \C[x_1,\ldots,x_n]^{S_n}$, satisfying the condition $f(x_1,-x_1,\ldots,x_n)=f(0,0,\ldots,x_n)$ which Nazarov in \cite{Naz} proved to be the center of the Brauer algebras $\Br_n(\delta)$ for high enough $\delta\in \Z$ as well as the type B parameters fitting in the semisimplicity parameters of $\Br_n(\delta)$.
\begin{defi} \label{elemwheel}
    Define the set of elementary symmetric wheel Laurent polynomials in $C$ (elementary wheel polynomials) to be the family $\{w_k \; | \; k\in \N\}\subseteq \C[x_1^{\pm 1},\ldots,x_n^{\pm 1}]$, where $w_n$ is given as the coefficient of $T^n$ in the formal power series expansion of the following generating function
    \begin{equation*}
       W(T)= \sum_{i=0}^{\infty}w_iT^i = \frac{\prod_{i=1}^n{(1-x_i^{-1}T)}}{\prod_{i=1}^n( 1-x_iT)}.
    \end{equation*}
\end{defi}
Since the rational functions defining these polynomials is symmetric in $x_1,\ldots,x_n$, it is clear that $w_k\in \C[x_1^{\pm 1},\ldots,x_n]^{S_n}$ for all $k\in \N$. Further, it is easy to show that they satisfy the wheel condition by making the substitution $x_2=x_1^{-1}$ in the generating function. This means that $\{w_k \; | \; k\in \N\}\subseteq C$.
\begin{ex}
    Let's scrutinize these polynomials for $n=2$.
    \begin{align*}
        & \frac{(1-x_1^{-1}T)(1-x_2^{-1}T)}{(1-x_1T)(1-x_2T)}=(1-x_1^{-1}T)(1-x_2^{-1}T)\sum_{k=0}^{\infty}(\sum_{a+b=k}x_1^ax_2^b)T^k= \\
        & =\sum_{n=0}^{\infty}\left((\sum_{a+b=n} x_1^ax_2^b)-(x_1^{-1}+x_2^{-1})(\sum_{a+b=n-1} x_1^ax_2^b) + x_1^{-1}x_2^{-1}(\sum_{a+b=n-2} x_1^ax_2^b) \right)T^n.
    \end{align*}
    Which yields a concrete expression for $w_k$. In particular, notice that 
    \[
    w_1=x_1-x_1^{-1}+x_2-x_2^{-1}=p_1^-
    \]
    and 
    \[
    w_2=x_1^2+x_2^2+x_1x_2-2+x_1^{-1}x_2^{-1} -x_1^{-1}x_2-x_1x_2^{-1}.
    \]
    One can calculate and see that
    \[
    w_2=\frac{1}{2}w_1^2+\frac{1}{2}p_2^-,
    \]
    so that $p_2^-=2w_2-w_1^2$.
\end{ex}
From the above example, it is plausible that the algebras generated by the $w_k$'s and the $p_k^-$'s are the same. In order to do that, we would like to connect their corresponding generating functions.
\par We calculate the generating function of the $p_k^-$'s. Consider the formal power series
\begin{align*}
   & \sum_{k=0}^{\infty}p_k^-T^k=\sum_{i=1}^n\sum_{i=0}^{\infty}(x_iT)^k-\sum_{i=1}^n\sum_{i=0}^{\infty}(x_i^{-1}T)^k= \\ 
   &=\sum_{i=1}^n \left(\frac{x_i}{1-x_iT}-\frac{x_i^{-1}}{1-x_i^{-1}T} \right) =\sum_{i=1}^n\frac{d}{dT}(-\log(1-x_iT)+\log(1-x_i^{-1}T)= \\
   & = \frac{d}{dT} \log \left( \frac{\prod_{i=1}^{n} (1-x_i^{-1}T)}{\prod_{i=1}^n(1-x_iT)}\right).
\end{align*}
\begin{lem}
    The power sum polynomials $\{p_k^- \; | \; k\in \N\}$ are given by the coefficients of $T^k$ in the formal power series expansion of the logarithmic derivative of the formal power series defining the elementary wheel polynomials $\{w_k \; | \; k\in \N\}$.
\end{lem}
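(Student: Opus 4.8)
The plan is to carry out rigorously the formal power series computation sketched just before the statement: to show that the logarithmic derivative $W'(T)/W(T)$ of the generating function $W(T)$ from \cref{elemwheel} has power series expansion whose coefficients are exactly the power sums $p_k^-$ (up to the evident reindexing). Everything takes place in the formal power series ring $\C[x_1^{\pm 1},\dots,x_n^{\pm 1}][[T]]$. The first point to record is that $W(T)$ lies in $1+T\,\C[x_1^{\pm 1},\dots,x_n^{\pm 1}][[T]]$, hence is a unit in this ring; in particular $W'(T)W(T)^{-1}$ is a well-defined element of the ring and one never needs to give independent meaning to $\log W(T)$ itself.

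I would then use the factorization $W(T)=\prod_{i=1}^n(1-x_i^{-1}T)\cdot\prod_{i=1}^n(1-x_iT)^{-1}$ together with the identity $(fg)'/(fg)=f'/f+g'/g$, valid for units $f,g$ in a formal power series ring, to reduce the computation to single factors:
\begin{equation*}
\frac{W'(T)}{W(T)}=\sum_{i=1}^n\left(\frac{x_i}{1-x_iT}-\frac{x_i^{-1}}{1-x_i^{-1}T}\right).
\end{equation*}
Expanding each summand as a geometric series and collecting the coefficient of $T^{k}$ gives
\begin{equation*}
\frac{W'(T)}{W(T)}=\sum_{k\ge 0}\Bigl(\sum_{i=1}^n\bigl(x_i^{k+1}-x_i^{-(k+1)}\bigr)\Bigr)T^{k}=\sum_{k\ge 0}p_{k+1}^-\,T^{k},
\end{equation*}
which says precisely that $p_k^-$ is the coefficient of $T^{k-1}$ in the logarithmic derivative of $W(T)$; equivalently, since $p_0^-=0$, one has $\sum_{k\ge 0}p_k^-T^k=T\cdot W'(T)/W(T)$, matching the generating-function identity computed immediately above the statement. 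One can reach the same conclusion even more directly by combining $\log W(T)=\sum_{i=1}^n\log(1-x_i^{-1}T)-\sum_{i=1}^n\log(1-x_iT)$ with the classical expansion $-\log(1-yT)=\sum_{k\ge 1}\frac{y^k}{k}T^k$, which yields $\log W(T)=\sum_{k\ge 1}\frac{p_k^-}{k}T^k$ and hence the asserted formula after differentiating termwise.

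There is no substantive obstacle here: this is a routine manipulation of formal power series, and the only point that deserves a word of care is the choice of ambient ring. One must work in $\C[x_1^{\pm 1},\dots,x_n^{\pm 1}][[T]]$ rather than with honest functions, observe that $W(T)$ is a unit there because its constant term equals $1$, and interpret the logarithmic derivative as $W'(T)W(T)^{-1}$; with that fixed, the geometric and logarithmic series expansions above are all legitimate and the identification of the coefficients with the $p_k^-$ is immediate.
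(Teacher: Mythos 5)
Your argument is correct and is essentially the paper's own: the lemma is justified there by exactly this computation (factor $W(T)$, take the logarithmic derivative term by term, expand geometric series, and read off coefficients), carried out in the display immediately preceding the statement. Your added care about working in $\C[x_1^{\pm 1},\ldots,x_n^{\pm 1}][[T]]$ where $W(T)$ is a unit, and your explicit handling of the index shift ($p_k^-$ is the coefficient of $T^{k-1}$ in $W'/W$, consistent with the paper's later identity $\sum_{k\geq 1}p_k^-T^{k-1}=W'(T)/W(T)$), are sensible refinements of the same proof.
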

Let's try to explicitly find formulas expressing the $p_k^-$'s in terms of the $w_k$'s. For that, note that $\frac{d}{dT}\log(W(T))=\frac{W'(T)}{W(T)}$. So we have to find the formal inverse of $W(T)$. Let $\frac{1}{W(T)}= \sum\limits_{i=0}^{\infty}v_iT^i$ be such that $(\sum\limits_{i=0}^{\infty} v_iT^i)(\sum\limits_{j=0}^{\infty} w_jT^j)=1$. Then we get the convolution recursive formulas 
\[
\sum\limits_{i=0}^kw_iv_{k-i}=\delta_{k,0} \quad \text{for all } \; k\geq 0.
\]
Since $w_0=1$, we can inductively prove that 
\begin{lem}
    \[
    v_k\in \Z[w_1,\ldots,w_k]
    \]
    for all $k\geq 1$.
\end{lem}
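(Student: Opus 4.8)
The plan is a direct induction on $k$ using the convolution recursion already set up before the statement. First I would record the two immediate consequences of $\sum_{i=0}^{k} w_i v_{k-i} = \delta_{k,0}$ together with $w_0 = 1$: namely $v_0 = 1$, and for every $k \geq 1$,
\[
v_k = -\sum_{i=1}^{k} w_i v_{k-i}.
\]
This isolates $v_k$ as an explicit $\Z$-linear combination of the products $w_i v_{k-i}$ with $1 \leq i \leq k$, involving only $v_0, \ldots, v_{k-1}$ and $w_1, \ldots, w_k$; the fact that $w_0$ is a unit is exactly what makes the recursion solvable for $v_k$ in terms of lower data.

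Then I would run the induction. The base case $k=1$ gives $v_1 = -w_1 v_0 = -w_1 \in \Z[w_1]$. For the inductive step, assume $v_j \in \Z[w_1,\ldots,w_j]$ for all $1 \leq j \leq k-1$, and note $v_0 = 1 \in \Z$. In the sum $v_k = -\sum_{i=1}^{k} w_i v_{k-i}$, the $i=k$ term is $w_k v_0 = w_k \in \Z[w_1,\ldots,w_k]$; for $1 \leq i \leq k-1$ we have $w_i \in \Z[w_1,\ldots,w_k]$ and, by the inductive hypothesis, $v_{k-i} \in \Z[w_1,\ldots,w_{k-i}] \subseteq \Z[w_1,\ldots,w_k]$, since $k-i \leq k-1 < k$. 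As $\Z[w_1,\ldots,w_k]$ is a ring, the whole sum, and hence $v_k$, lies in it.

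There is essentially no obstacle here; the only point to keep an eye on is the indexing, i.e.\ that every $v_{k-i}$ occurring has index strictly below $k$ (guaranteed by $i \geq 1$), so the induction is well founded. One could phrase the same fact non-recursively as the observation that $W(T) \in 1 + T\,\Z[w_1,w_2,\ldots][[T]]$ is a unit in that power series ring with inverse in $1 + T\,\Z[w_1,w_2,\ldots][[T]]$, reading off degree $k$; but the recursion is cleaner. The argument in fact yields slightly more: each $v_k$ is a polynomial in $w_1,\ldots,w_k$ with integer coefficients and zero constant term, since every monomial appearing contains at least one factor $w_i$ with $i \geq 1$.
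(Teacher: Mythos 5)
Your proof is correct and follows exactly the argument the paper intends: solve the convolution recursion $\sum_{i=0}^k w_i v_{k-i}=\delta_{k,0}$ for $v_k$ using $w_0=1$ and induct on $k$. The paper only sketches this ("Since $w_0=1$, we can inductively prove..."), and your write-up fills in the same induction in detail.
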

Using this, we can finally prove that the power sum wheel polynomials generate the same algebra as the elementary wheel polynomials. Note that $p_0^-=0$. Then, we have 
\begin{align*}
& \sum\limits_{k=1}^{\infty}p_k^{-}T^{k-1}=\frac{W'(T)}{W(T)}=(\sum_{i=1}^{\infty}iw_iT^{i-1})(\sum_{j=0}^{\infty} v_jT^j)=\sum_{k=1}^{\infty}\left(\sum_{j=1}^kkw_jv_{k-j} \right)T^{k-1}.
\end{align*}
which finally implies 
\begin{equation*}\label{newtonform}
    p_k^-=\sum_{j=1}^k jw_jv_{k-j}, \quad \text{for all } \; k\geq 0.
\end{equation*}
We have thus proved the following.
\begin{prop}\label{powersumelemgenthesame}
    The power sum wheel polynomials can be expressed in terms of the elementary wheel polynomials. That is, in $C$, they generate the same algebras. In formulas,
    \begin{equation*}
        \C[e_n^{\pm1}][ p_1^- ,p_2^-,\ldots]=\C[e_n^{\pm1}][ w_1,w_2,\ldots].
    \end{equation*}
\end{prop}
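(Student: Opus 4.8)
The plan is to prove the two inclusions between $\C[e_n^{\pm 1}][p_1^-,p_2^-,\ldots]$ and $\C[e_n^{\pm 1}][w_1,w_2,\ldots]$ separately, each by an elementary manipulation of formal power series in $T$; in fact the argument will show that the subalgebras generated by the $p_k^-$ alone and by the $w_k$ alone already coincide, and one then adjoins $e_n^{\pm 1}$ to both sides. One inclusion is essentially already in hand: from the identity $p_k^-=\sum_{j=1}^{k}jw_jv_{k-j}$ derived above, isolating the $j=k$ term (recall $v_0=1$) gives
\[
p_k^- = k\,w_k + \sum_{j=1}^{k-1} j\,w_j v_{k-j},
\]
and since $v_{k-j}\in\Z[w_1,\ldots,w_{k-j}]$ for each $j$, the trailing sum lies in $\Z[w_1,\ldots,w_{k-1}]$. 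Hence $p_k^-\in\Z[w_1,\ldots,w_k]$ for every $k$, so $\C[e_n^{\pm 1}][p_1^-,p_2^-,\ldots]\subseteq\C[e_n^{\pm 1}][w_1,w_2,\ldots]$.

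For the reverse inclusion I would start from the logarithmic derivative identity $W'(T)/W(T)=\sum_{k\geq 1}p_k^-T^{k-1}$ established above, rewrite it as $W'(T)=W(T)\sum_{k\geq 1}p_k^-T^{k-1}$, and compare the coefficients of $T^{k-1}$ on the two sides. Using $W'(T)=\sum_{k\geq 1}kw_kT^{k-1}$ and $w_0=1$, this produces the recursion
\[
k\,w_k \;=\; p_k^- + \sum_{j=1}^{k-1} w_{k-j}\,p_j^-, \qquad k\geq 1.
\]
Since we work over a field of characteristic zero we may divide by $k$ and induct on $k$: one has $w_1=p_1^-$, and if $w_1,\ldots,w_{k-1}\in\mathbb{Q}[p_1^-,\ldots,p_{k-1}^-]$ then the recursion forces $w_k\in\mathbb{Q}[p_1^-,\ldots,p_k^-]$. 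Thus $w_k\in\C[p_1^-,p_2^-,\ldots]$ for all $k$, giving $\C[e_n^{\pm 1}][w_1,w_2,\ldots]\subseteq\C[e_n^{\pm 1}][p_1^-,p_2^-,\ldots]$; together with the first inclusion this proves the proposition.

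I do not expect a genuine obstacle. The recursion above is exactly the classical Newton-type identity relating power sums to complete homogeneous symmetric functions — here applied to the pair of alphabets $x_1,\dots,x_n$ and $x_1^{-1},\dots,x_n^{-1}$ — and can equally be read off from $W(T)=\exp\bigl(\sum_{k\geq 1}\tfrac{p_k^-}{k}T^k\bigr)$. The only point that needs checking is that the formal power series operations involved (the logarithm $\log W(T)$, its exponential, and the logarithmic derivative) are all legitimate, which holds because $W(0)=w_0=1$ and we are in characteristic zero.
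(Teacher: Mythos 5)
Your argument is correct, and it is built on the same generating-function machinery the paper uses: the series $W(T)$, its logarithmic derivative, and the resulting Newton-type relations. The difference is in how the two inclusions are handled. The paper's own derivation (the computation preceding the proposition, via the inverse series $\tfrac{1}{W(T)}=\sum_k v_kT^k$ with $v_k\in\Z[w_1,\ldots,w_k]$ and the identity $p_k^-=\sum_{j=1}^k jw_jv_{k-j}$) only makes explicit the inclusion $\C[e_n^{\pm1}][p_1^-,p_2^-,\ldots]\subseteq\C[e_n^{\pm1}][w_1,w_2,\ldots]$; the reverse inclusion, which is what the stated equality of algebras also requires, is left implicit there (one would solve the same triangular system for $w_k$). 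You supply that missing direction explicitly, and by a slightly cleaner route: instead of passing through the $v_k$, you multiply out $W'(T)=W(T)\sum_{k\geq1}p_k^-T^{k-1}$ to get the recursion
\begin{equation*}
    k\,w_k=p_k^-+\sum_{j=1}^{k-1}w_{k-j}\,p_j^-,
\end{equation*}
and induct, using characteristic zero to divide by $k$. This buys you a complete two-inclusion proof (and the sharper statement that already $\C[p_1^-,p_2^-,\ldots]=\C[w_1,w_2,\ldots]$ before adjoining $e_n^{\pm1}$), at the mild cost of working over $\mathbb{Q}$ rather than $\Z$ in that direction, which is harmless here since the proposition is over $\C$. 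Your first inclusion, isolating the $j=k$ term with $v_0=1$, is exactly the paper's route.
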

\section{The center of the generic Birman-Murakami-Wenzl algebras}
In this section, we determine the center of $B_n(q,t)$, for $q,t\in \C$ generic. We first connect the affine BMW algebras with the concrete ones. In particular, we define the natural surjection from $aB_n(q,t)$ to $B_n(q,t)$ and explain how this map gives us a large family of central elements in $B_n(q,t)$. Recall the definition of $aB_n(q,t)$ from \cref{afbmwdef}. The following lemma is easily deduced from the definitions of $aB_n(q,t)$ and $B_n(q,t)$.
\begin{lem}
    There is a natural surjective $\C$-algebra morphism
    \[
    \pi_n: aB_n(q,t)\rightarrow B_n(q,t)
    \]
    given by mapping $s_i$ to the generator with the same name in $B_n(q,t)$, the affine generator $x$ maps to $1$ and for $l\in\Z$, the scalars $z_0^{(l)}$ map to $t^l(\frac{t-t^{-1}}{q-q^{-1}}+1)$. The kernel of this map is equal to $(x-1)$, the ideal of $aB_n(q,t)$ generated by the element $x-1$. Further, this restricts to an algebra morphism between the centers
    \[
    \pi_n|_{Z(aB_n(q,t))}:C\rightarrow Z(B_n(q,t)).
    \]
\end{lem}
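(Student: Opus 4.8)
The plan is to verify each clause of the statement by direct computation against the defining relations of $aB_n(q,t)$ and $B_n(q,t)$. First I would check that the assignment $s_i \mapsto s_i$, $x \mapsto 1$, $z_0^{(l)} \mapsto t^l\bigl(\tfrac{t-t^{-1}}{q-q^{-1}}+1\bigr)$ is well-defined as a $\C$-algebra morphism. Since $aB_n(q,t)$ is presented by the BMW relations on the $s_i$ together with the affine relations in \cref{afbmwdef}, it suffices to see that every relation is respected. The BMW relations involving only the $s_i$ hold trivially because they are the defining relations of $B_n(q,t)$. For the affine relations, note that setting $x = 1$ forces $x_1 = t\cdot 1 = t$ and inductively $x_{i+1} = s_i x_i s_i$, which is precisely \cref{jmbmw}; thus the images of $x_1,\dots,x_n$ are the Jucys--Murphy elements of $B_n(q,t)$. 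The commutations $x_1 x_2 = x_2 x_1$, $[x,s_i]=0$ for $i \geq 2$, $x_i x_{i+1} s_i = s_i x_i x_{i+1}$ and $x_i x_{i+1} e_i = e_i x_i x_{i+1}$ are all known to hold among the JM elements of $B_n(q,t)$ (they follow from the basic centralizing property $x_i \in C(B_i(q,t),B_{i-1}(q,t))$ recorded after \cref{jmbmw}, or can be checked diagrammatically via the Kauffman tangle presentation). The only relation requiring genuine input is $e_1 x_1^l e_1 = z_0^{(l)} e_1$: after applying $\pi_n$ this reads $e_1 t^l e_1 = t^l e_1^2 = t^l \bigl(\tfrac{t-t^{-1}}{q-q^{-1}}+1\bigr) e_1 = \delta t^l e_1$, using $e_1^2 = \delta e_1$ from the Remark following \cref{BMWdef}; so the correct value of $z_0^{(l)}$ is exactly $t^l \delta$, which matches. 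This establishes that $\pi_n$ is a well-defined algebra morphism, and it is surjective because the $s_i$ already generate $B_n(q,t)$.

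Next I would identify the kernel. Clearly $x - 1 \in \ker \pi_n$, so $(x-1) \subseteq \ker \pi_n$. For the reverse inclusion one invokes the basis theorem for $aB_n(q,t)$ from \cite{DRV}: a basis of $aB_n(q,t)$ is given by affine analogues of Brauer diagrams decorated by monomials in $x_1,\dots,x_n$, and modulo $(x-1)$ these reduce to lifts of Brauer diagrams, which by the Morton--Wassermann basis theorem \cite{Mor} form a basis of $B_n(q,t)$. Hence $aB_n(q,t)/(x-1)$ has the same rank as $B_n(q,t)$ and $\pi_n$ descends to an isomorphism $aB_n(q,t)/(x-1) \xrightarrow{\sim} B_n(q,t)$, giving $\ker \pi_n = (x-1)$.

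Finally, for the restriction to centers: any algebra surjection sends central elements to central elements, since if $z \in Z(aB_n(q,t))$ and $b \in B_n(q,t)$, pick a preimage $a$ with $\pi_n(a)=b$ and compute $\pi_n(z)b = \pi_n(za) = \pi_n(az) = b\pi_n(z)$. By \cref{cenafbmw} we have $Z(aB_n(q,t)) = C$, and $\pi_n$ sends each $x_i$ to the JM element $x_i \in B_n(q,t)$, so $\pi_n(C) = WL[x_1,\dots,x_n]$ lands inside $Z(B_n(q,t))$; this is exactly the restricted morphism $\pi_n|_C : C \to Z(B_n(q,t))$ in the statement.

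The only genuinely delicate point is the identification $\ker \pi_n = (x-1)$, which rests on the basis theorem of \cite{DRV}; everything else is a routine relation check. I would therefore present the well-definedness verification in full and cite \cite{DRV} and \cite{Mor} for the kernel computation.
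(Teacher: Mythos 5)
Your proposal is correct, but it identifies the kernel by a different mechanism than the paper. The paper's proof also factors $\pi_n$ through $\tilde\pi_n\colon aB_n(q,t)/(x-1)\to B_n(q,t)$, but then it builds an explicit inverse: the assignment $s_i\mapsto s_i$ gives a map $i_n\colon B_n(q,t)\to aB_n(q,t)$ (the BMW relations are among the defining relations of $aB_n(q,t)$), and composing with the projection one checks on generators that both composites with $\tilde\pi_n$ are identities, so $\tilde\pi_n$ is an isomorphism and $\ker\pi_n=(x-1)$ without invoking any basis theorem. You instead compare ranks: the \cite{DRV} basis of $aB_n(q,t)$ reduces modulo $(x-1)$ to a spanning set of at most $(2n-1)!!$ undecorated diagram words, which $\tilde\pi_n$ maps onto the Morton--Wassermann basis of $B_n(q,t)$ from \cite{Mor}, forcing $\tilde\pi_n$ to be an isomorphism. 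Both arguments work; the paper's is more elementary and self-contained on this point (your route imports the full affine basis theorem, and strictly speaking you get ``rank at most $(2n-1)!!$'' from spanning and only then equality from surjectivity of $\tilde\pi_n$), while yours has the merit of spelling out the well-definedness check that the paper dismisses as immediate -- in particular pinning down that the relation $e_1x_1^le_1=Z_0^{(l)}e_1$ forces $z_0^{(l)}\mapsto t^l\delta$ with $\delta=\frac{t-t^{-1}}{q-q^{-1}}+1$. One small imprecision: the relation $x_ix_{i+1}e_i=e_ix_ix_{i+1}$ does not follow merely from the centralizing property $x_i\in C(B_i(q,t),B_{i-1}(q,t))$; it needs the delooping relations (indeed $x_ix_{i+1}e_i=e_i=e_ix_ix_{i+1}$ in $B_n(q,t)$), but this is a routine check and not a gap.
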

\begin{proof}
    The fact that this map is well defined and surjective is immediate. This map factors through the quotient $\tilde{\pi}_n:aB_n(q,t)/(x-1)\rightarrow B_n(q,t)$. There is also a map in the other direction, $i_n:B_n(q,t)\rightarrow aB_n(q,t)$ which is injective and such that 
    \begin{align*}
        & (\pi_n\circ i_n)\circ \tilde{\pi}_n=id_{aB_n(q,t)/(x-1)} \\
        & \tilde{\pi}_n\circ (\pi_n\circ i_n) = id_{B_n(q,t)},
    \end{align*}
    proving that $\tilde{\pi}_n$ is an isomorphism and thus the kernel of $\pi_n$ is equal to the ideal generated by $x-1$.
\end{proof}
This means we can project central elements of the affine version down to the concrete one. Let $WL[x_1,\ldots,x_n]$ denote the subalgebra of $B_n(q,t)$, the elements of which are wheel polynomials in the Jucys Murphy elements of $B_n(q,t)$, namely $x_1=t, \; x_2=s_1^2,\;\ldots\;,x_n=s_{n-1}s_{n-2}\cdots s_1^2s_2\cdots s_{n-2}s_{n-1}$. It is then clear that the image of $C$ under $\pi_n$ is equal to $WL[x_1,\ldots,x_n]$.
\begin{cor}
    \begin{equation*}
        WL[x_1,\ldots,x_n]\subseteq Z(B_n(q,t)).
    \end{equation*}
\end{cor}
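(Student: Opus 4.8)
The plan is to deduce this immediately from the preceding lemma together with \cref{cenafbmw}. The key observation is that any surjective algebra morphism carries central elements to central elements: if $\pi_n(z)$ is to commute with an arbitrary element $\pi_n(a)$ of $B_n(q,t)$, one simply writes $\pi_n(z)\pi_n(a)=\pi_n(za)=\pi_n(az)=\pi_n(a)\pi_n(z)$, using surjectivity to realize every element of $B_n(q,t)$ as some $\pi_n(a)$ and centrality of $z$ in $aB_n(q,t)$. This is precisely the last assertion of the lemma, so $\pi_n$ restricts to a morphism $C=Z(aB_n(q,t))\to Z(B_n(q,t))$.

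The second step is to identify the image $\pi_n(C)$. Since $\pi_n$ sends the affine generator $x$ to $1$ and each $s_i$ to $s_i$, the recursion $x_1=tx$, $x_{i+1}=s_ix_is_i$ in $aB_n(q,t)$ maps to the recursion $x_1=t$, $x_{i+1}=s_ix_is_i$ defining the Jucys–Murphy elements of $B_n(q,t)$ in \cref{jmbmw}. Hence for any Laurent polynomial $p$ one has $\pi_n\big(p(x_1,\dots,x_n)\big)=p(x_1,\dots,x_n)$, where on the right the $x_i$ now denote the concrete JM elements. Applying this to the subalgebra $C\subseteq\C[x_1^{\pm1},\dots,x_n^{\pm1}]$ gives $\pi_n(C)=WL[x_1,\dots,x_n]$ by the very definition of the latter as wheel polynomials evaluated at the JM elements of $B_n(q,t)$. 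Combining the two steps yields $WL[x_1,\dots,x_n]=\pi_n(C)=\pi_n\big(Z(aB_n(q,t))\big)\subseteq Z(B_n(q,t))$, which is the claim.

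There is no real obstacle here: the statement is a formal consequence of the lemma and \cref{cenafbmw}, and in particular its proof is insensitive to whether $B_n(q,t)$ is semisimple. The only point meriting a word of care is that the evaluation map $\C[x_1^{\pm1},\dots,x_n^{\pm1}]\to B_n(q,t)$ need not be injective, since the concrete JM elements satisfy relations; but this is harmless, because $WL[x_1,\dots,x_n]$ is \emph{defined} to be the image of that map restricted to wheel polynomials, so the identity $\pi_n(C)=WL[x_1,\dots,x_n]$ holds on the nose regardless.
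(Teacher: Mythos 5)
Your proof is correct and matches the paper's reasoning exactly: the lemma preceding the corollary gives the surjection $\pi_n$ restricting to a map on centers, \cref{cenafbmw} identifies $Z(aB_n(q,t))=C$, and the paper likewise observes that $\pi_n(C)=WL[x_1,\ldots,x_n]$ since the affine generators are sent to the Jucys--Murphy elements. Your remark on the non-injectivity of the evaluation map being harmless is a nice touch, but the argument is the same as the paper's.
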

\par Now we are ready to state one of the main results of this paper, as well as outline the technique which will be used in the proof.
\begin{thm}\label{centergeneric}
    For $q,t\in \C$ generic, it holds that
    \begin{equation*}
        Z(B_n(q,t))=WL[x_1,\ldots,x_n].
    \end{equation*}
Further, for all $\lambda\in \Lambda_n$, there exists an elementary wheel polynomial $w_{n(\lambda)}$ such that their evaluations in the Jucys Murphy elements provides a basis for $Z(B_n(q,t))$.
\end{thm}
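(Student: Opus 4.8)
The plan is to prove both halves of the dimension inequality. The inclusion $WL[x_1,\ldots,x_n]\subseteq Z(B_n(q,t))$ is already established by the corollary preceding the theorem, so it suffices to show $\dim_\C WL[x_1,\ldots,x_n]\geq \dim_\C Z(B_n(q,t))$. Since $B_n(q,t)$ is semisimple for generic $q,t$, the Artin--Wedderburn theorem identifies $\dim_\C Z(B_n(q,t))$ with $\lvert\Lambda_n\rvert$, the number of simple modules, and by Schur's lemma every central element acts on $L(\lambda)$ by a scalar. So the task reduces to a point-separation statement: produce $\lvert\Lambda_n\rvert$ elements of $WL[x_1,\ldots,x_n]$ whose scalar-action vectors (indexed by $\Lambda_n$) are linearly independent, which via a Vandermonde-type argument follows once we exhibit, for each $(\lambda,f)\in\Lambda_n$, a single element of $WL$ that separates $L(\lambda)$ from all the others — this is exactly the promised $w_{n(\lambda)}$.

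First I would compute the scalar by which the elementary wheel polynomial $w_k$ acts on $L(\lambda)$. Using \cref{JMactseminormal}, on the Murphy basis vector $f_T$ the Jucys--Murphy element $x_i$ acts by the $(q,t)$-content $c_T(i)$; since $w_k$ lies in $Z(B_n(q,t))$ it acts by the same scalar on every $f_T$ with $T\in T^{\operatorname{ud}}_n(\lambda)$, so we may evaluate on any convenient $T$, e.g.\ the canonical path, for which the multiset $\{c_T(1),\ldots,c_T(n)\}$ is (up to the $f$ cancelling pairs $tq^{2i},t^{-1}q^{-2i}$) the multiset $\{tq^{2c(p)}:p\in\lambda\}$ of box-contents of $\lambda$ scaled by $t$. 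Plugging these into the generating function $W(T)=\prod_i(1-x_i^{-1}T)/\prod_i(1-x_iT)$ from \cref{elemwheel}, the wheel condition forces the cancelling pairs to contribute a factor $1$, so the eigenvalue of $w_k$ on $L(\lambda)$ is the coefficient of $T^k$ in
\[
\frac{\prod_{p\in\lambda}(1-t^{-1}q^{-2c(p)}T)}{\prod_{p\in\lambda}(1-tq^{2c(p)}T)}.
\]
This is a rational function of $T$ depending only on the multiset of contents of $\lambda$ — equivalently, by the remark that contents together with diagonal lengths determine $\lambda$, it depends only on $\lambda$ (not on $f$), but the pair $(\lambda,f)$ is still recovered since $f$ is fixed by $\lvert\lambda\rvert=n-2f$.

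Next, given a target $(\lambda,f)$, I would choose $n(\lambda)$ to be a degree at which this generating function "sees" the longest column or row of $\lambda$ that no dominating partition possesses: concretely, expand the generating function and observe that its first nonzero coefficient beyond the trivial ones encodes the extremal content $w(\lambda)$ or $-h(\lambda)$, and then a finite list of coefficients $w_1,\ldots,w_N$ (for $N$ a bound depending only on $n$) already separates all of $\Lambda_n$ because two partitions of $n-2f$ with the same generating function up to order $N$ have the same content multiset, hence are equal. Wrapping this up: the map $\Lambda_n\to\C^N$, $(\lambda,f)\mapsto(\text{eigenvalue of }w_k\text{ on }L(\lambda))_{k=1}^N$ is injective, so the $N\times\lvert\Lambda_n\rvert$ matrix of these eigenvalues has rank $\lvert\Lambda_n\rvert$, giving $\dim_\C WL[x_1,\ldots,x_n]\geq\lvert\Lambda_n\rvert=\dim_\C Z(B_n(q,t))$ and hence equality; picking $\lvert\Lambda_n\rvert$ of the $w_k$ forming a basis, and for each $\lambda$ an index $n(\lambda)$ realizing the separation, proves the "Further" clause.

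The main obstacle I anticipate is the separation step: one must check that the content-generating rational function genuinely distinguishes every pair in $\Lambda_n$, and that finitely many of its coefficients suffice, uniformly in $n$. The subtlety is that the numerator and denominator share no common factor exactly when no content $i$ has its diagonal appearing "on both sides", which is automatic here since all box-contents $tq^{2c(p)}$ are distinct powers of $q$ (as $q$ is not a root of unity) and the $t^{-1}q^{-2c(p)}$ are distinct from all $tq^{2c(p')}$ for $t$ generic; so the rational function is in lowest terms and its partial-fraction data $\{tq^{2c(p)}\}$ with multiplicities is recoverable, which is precisely the content multiset. Verifying cleanly that this recovers $\lambda$ — invoking the diagonal-length characterization from the combinatorics section — and bounding the needed number of coefficients is where the real work lies; everything else is the standard Jucys--Murphy/Artin--Wedderburn machinery already set up in the preceding sections.
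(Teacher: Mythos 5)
Most of your outline tracks the paper's argument: the inclusion via the affine surjection, the Artin--Wedderburn count $\dim_\C Z(B_n(q,t))=\lvert\Lambda_n\rvert$, the computation of the eigenvalue of $w_k$ on $L(\lambda)$ by evaluating the generating function on the content multiset of a convenient up-down tableau (the paper uses the drunk path of \cref{drunkpath} and \cref{multisetofcontents}), the reducedness of the resulting rational function for generic $q,t$ (\cref{reducedexpressiongeneric}), and the recovery of $\lambda$ from its reduced numerator/denominator (\cref{polyseparatesimples}). The genuine gap is in your final step. From the injectivity of the map $\Lambda_n\to\C^N$, $(\lambda,f)\mapsto(w_k(\lambda))_{k=1}^N$, you conclude that the $N\times\lvert\Lambda_n\rvert$ matrix of eigenvalues has rank $\lvert\Lambda_n\rvert$. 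That inference is false: injectivity only says the columns are pairwise distinct, not linearly independent (three distinct points on a line in $\C^N$ already give a counterexample). Pairwise separation by the family $\{w_k\}$ does not by itself produce $\lvert\Lambda_n\rvert$ linearly independent elements of $WL[x_1,\ldots,x_n]$; one must use that $WL$ is an \emph{algebra}. The paper closes exactly this gap with the point-separation \cref{separationlemma} (following Jung--Kim), which inductively builds elements $p_\lambda$ as products/combinations inside the subalgebra so that the evaluation matrix $(p_\lambda(\mu))$ is invertible. An equivalent fix would be a Lagrange-type argument: the image of $WL$ in $\C^{\Lambda_n}$ is a point-separating unital subalgebra of a product of copies of $\C$, hence all of $\C^{\Lambda_n}$; but some such use of multiplicative closure is indispensable and is missing from your write-up.

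A second, smaller issue concerns the ``Further'' clause. You assert that for each $(\lambda,f)$ one can pick a single degree $n(\lambda)$ so that $w_{n(\lambda)}$ separates $L(\lambda)$ from \emph{all} other simples, justified only by the remark that ``the first nonzero coefficient beyond the trivial ones encodes the extremal content.'' This does not establish simultaneous separation of $\lambda$ from every $\mu$, and no argument is given that the resulting $w_{n(\lambda)}$, evaluated at the Jucys--Murphy elements, are linearly independent. (Your worry about bounding the number of coefficients ``uniformly in $n$'' is, by contrast, a non-issue: for fixed $n$ everything is finite.) So as written the proposal proves neither the dimension inequality nor the basis statement without importing the algebra-closure argument that the paper supplies.
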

\par Since $q,t\in \C$ are generic, by \cref{semibywen}, the algebras $B_n(q,t)$ are semisimple. Using the Artin-Wedderburn theorem, we get that the dimension of the center over $\C$ is equal to the number of inequivalent simple modules of $B_n(q,t)$ over $\C$. Knowing that these are in bijection with the number of partitions of $n-2f$, for $f=0,\ldots,[\frac{n}{2}]$, we deduce the following. 
\begin{lem}
    For $q,t\in \C$ generic, it holds
    \begin{equation*}
        \dim_{\C}Z(B_n(q,t))=\lvert\Lambda_n\rvert.
    \end{equation*}  
\end{lem}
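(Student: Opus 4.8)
The plan is to combine two facts that have already been recalled in this section: that $B_n(q,t)$ is semisimple for generic parameters, and that its simple modules are indexed by $\Lambda_n$.

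First I would invoke \cref{semibywen}: since $q,t\in\C$ are generic, $B_n(q,t)$ is a semisimple $\C$-algebra, and it is finite dimensional (of dimension $(2n-1)!!$ by the Morton--Wassermann basis theorem). As $\C$ is algebraically closed, the Artin--Wedderburn theorem yields an isomorphism $B_n(q,t)\cong\prod_{i=1}^k M_{d_i}(\C)$, where $k$ is the number of isomorphism classes of simple $B_n(q,t)$-modules and $d_i$ the dimension of the $i$-th one. Taking centers and using that $Z(M_d(\C))=\C\cdot\id$ is one-dimensional, one gets $Z(B_n(q,t))\cong\C^k$, so $\dim_\C Z(B_n(q,t))=k$.

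Second, by the classification of the simple $B_n(q,t)$-modules recalled above, these are in bijection with $\Lambda_n$, whence $k=|\Lambda_n|$. Chaining the two identities proves the claim. I do not expect any obstacle here: the statement is a formal consequence of semisimplicity together with the known indexing set of simples, and if an explicit number is wanted one can record $|\Lambda_n|=\sum_{f=0}^{\lfloor n/2\rfloor}p(n-2f)$, where $p(m)$ denotes the number of partitions of $m$. The genuine work of the paper lies not in this count but in the reverse task of exhibiting $|\Lambda_n|$ linearly independent central elements inside $WL[x_1,\ldots,x_n]$, which is carried out in the sections that follow.
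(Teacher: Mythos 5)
Your argument is correct and is essentially the paper's own: semisimplicity for generic parameters via \cref{semibywen}, Artin--Wedderburn over $\C$ giving $\dim_\C Z(B_n(q,t))$ equal to the number of simple modules, and the classification of simples by $\Lambda_n$. Nothing further is needed.
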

\par The technique to prove \cref{centergeneric} is now clear. If we can show that the algebra $WL[x_1,\ldots,x_n]$ produces at least $\dim_\C Z(B_n(q,t))$ linearly independent elements, then we are done. In order to do that, we will show that the family $(w_n)_{n\geq 0}$ can distinguish simple modules of $B_n(q,t)$. That is, we will show that for every two simple modules $L(\lambda)$ and $L(\mu)$, there exist wheel polynomials $f,g\in C$ such that their images in $WL[x_1,\ldots,x_n]$ act by different scalars on $L(\lambda)$ and $L(\mu)$. Then a point separation argument will give us that by the above we can produce at least one linearly independent element of $WL[x_1,\ldots,x_n]$ for every simple module $L(\lambda)$ which will complete the proof.
\par Since $WL[x_1,\ldots,x_n]$ is a central subalgebra, by Schur's lemma every element $p\in WL[x_1,\ldots,x_n]$ acts on a simple module $L(\lambda)$ by some scalar. We first show that this scalar only depends on the partition $\lambda \vdash n-2f$ and not on the Murphy basis element corresponding to some path 
$T\in T^{\operatorname{ud}}_n(\lambda)$. Remember that in these Murphy bases elements $f_T$, the action of the Jucys-Murphy elements is given in \cref{JMactseminormal}.
\begin{lem}\label{independentofpath}
    Let $\lambda \vdash n-2f$ for some $f=0,\ldots,[\frac{n}{2}]$. If $f_T,f_S\in L(\lambda)$ are two elements of the Murphy basis of $L(\lambda)$, for $T,S\in T^{\operatorname{ud}}_n(\lambda)$ and $p\in WL[x_1,\ldots,x_n]$, then
    \begin{equation*}
        p(c_T(1),\ldots,c_T(n))=p(c_S(1),\ldots,c_S(n)).
    \end{equation*}
\end{lem}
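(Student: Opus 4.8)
The plan is to reduce the statement to a purely combinatorial fact about the multiset of $(q,t)$-contents attached to an updown tableau, and then exploit the two defining features of a wheel Laurent polynomial. An element of $WL[x_1,\dots,x_n]$ is the image under $X_i\mapsto x_i$ of a wheel Laurent polynomial $p\in C$, so $p$ is (i) symmetric under permutations of its arguments, and (ii) satisfies $p(X_1,X_1^{-1},X_3,\dots,X_n)=p(1,1,X_3,\dots,X_n)$. By \cref{JMactseminormal} the commuting operators $x_1,\dots,x_n$ act on $f_T$ by the scalars $c_T(1),\dots,c_T(n)$, hence $p$ acts on $f_T$ by the scalar $p(c_T(1),\dots,c_T(n))$; so it suffices to show this scalar does not depend on $T\in T^{\operatorname{ud}}_n(\lambda)$.

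\emph{Key claim.} If $(\lambda,f)\in\Lambda_n$ and $T\in T^{\operatorname{ud}}_n(\lambda)$, then there are elements $z_1,\dots,z_f\in A$ (depending on $T$) such that, as multisets,
\[
\{c_T(1),\dots,c_T(n)\}=\{tq^{2c(p)}:p\in\lambda\}\;\sqcup\;\{z_1,z_1^{-1},\dots,z_f,z_f^{-1}\},
\]
where $p$ ranges over the boxes of the final partition $\lambda\vdash n-2f$. I would prove this by induction on $n$ (the case $n=0$ being vacuous), examining the last step of $T$. If $T_n=T_{n-1}\cup\{p\}$ with $c(p)=i$, then $c_T(n)=tq^{2i}$, the truncation $(T_0,\dots,T_{n-1})$ is an updown tableau of shape $T_{n-1}\vdash(n-1)-2f$, and $\{tq^{2c(p')}:p'\in T_{n-1}\}\sqcup\{tq^{2i}\}=\{tq^{2c(p')}:p'\in\lambda\}$, so the inductive hypothesis gives the claim with the same $z_j$'s. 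If instead $T_{n-1}=\lambda\cup\{p\}$ with $c(p)=i$, then $c_T(n)=t^{-1}q^{-2i}=(tq^{2i})^{-1}$, the truncation is an updown tableau of shape $T_{n-1}\vdash(n-1)-2(f-1)$, so by induction its content multiset is $\{tq^{2c(p')}:p'\in T_{n-1}\}\sqcup\{z_1,z_1^{-1},\dots,z_{f-1},z_{f-1}^{-1}\}$; since $\{tq^{2c(p')}:p'\in T_{n-1}\}=\{tq^{2c(p')}:p'\in\lambda\}\sqcup\{tq^{2i}\}$, appending $c_T(n)=(tq^{2i})^{-1}$ creates exactly one more inverse pair, which proves the claim with $z_f=tq^{2i}$.

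Given the claim, the lemma follows quickly. By property (i), $p(c_T(1),\dots,c_T(n))$ depends only on the above multiset, so it equals $p\big(tq^{2c(p_1)},\dots,tq^{2c(p_{n-2f})},z_1,z_1^{-1},\dots,z_f,z_f^{-1}\big)$, where $\{p_1,\dots,p_{n-2f}\}$ are the boxes of $\lambda$. Applying property (ii) $f$ times — each time first using symmetry to move a pair $(z_j,z_j^{-1})$ into the first two arguments, then replacing it by $(1,1)$ — we obtain
\[
p(c_T(1),\dots,c_T(n))=p\big(tq^{2c(p_1)},\dots,tq^{2c(p_{n-2f})},1,1,\dots,1\big).
\]
The right-hand side depends only on $\lambda$ (equivalently, only on the contents $c(p_i)\in D(\lambda)$ counted with their multiplicities $m_\lambda(\cdot)$), and the identical computation for $S$ yields the same value, establishing $p(c_T(1),\dots,c_T(n))=p(c_S(1),\dots,c_S(n))$.

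The only genuine work lies in the inductive bookkeeping of the key claim: tracking how $f$ changes at an up-step versus a down-step and verifying that a down-step always contributes the inverse of a content value already present. This is exactly where the relation $t^{-1}q^{-2i}=(tq^{2i})^{-1}$ — and hence the "wheel" shape of the content multiset — intervenes; everything downstream is formal, being just the two defining properties of a wheel Laurent polynomial applied in turn.
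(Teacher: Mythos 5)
Your proof is correct, but it takes a genuinely different route from the paper. The paper disposes of this lemma in two lines: since $WL[x_1,\ldots,x_n]$ has already been shown to be a central subalgebra (via the projection from the affine BMW center) and $L(\lambda)$ is simple, Schur's lemma forces $p$ to act by a single scalar on all of $L(\lambda)$, so the eigenvalues $p(c_T(1),\ldots,c_T(n))$ and $p(c_S(1),\ldots,c_S(n))$ read off from \cref{JMactseminormal} must coincide. You instead prove the underlying combinatorial fact directly: your key claim, that the content multiset of any $T\in T^{\operatorname{ud}}_n(\lambda)$ is the multiset $\{tq^{2c(p)}\colon p\in\lambda\}$ together with $f$ inverse pairs $\{z_j,z_j^{-1}\}$, is established by a clean induction on the last step of the path (your case analysis, including the parity bookkeeping that a removal step forces $f\geq 1$ and shifts the truncated shape to level $f-1$, is sound), and then symmetry plus $f$ applications of the wheel condition collapse every path to the common value $p(tq^{2c(p_1)},\ldots,tq^{2c(p_{n-2f})},1,\ldots,1)$. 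What each approach buys: the paper's argument is essentially free given the machinery already assembled (centrality, Murphy basis, semisimplicity), whereas yours uses none of that representation theory — it is a statement about wheel Laurent polynomials evaluated at content vectors, valid for any parameters and independent of semisimplicity or centrality, and it simultaneously generalizes \cref{multisetofcontents} from the drunk path to an arbitrary updown tableau. That extra robustness would, for instance, transfer without change to non-semisimple settings where Schur's lemma on a full eigenbasis is not available.
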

\begin{proof}
    By \cref{JMactseminormal}, we have
    \begin{equation*}
        pf_T=p(c_T(1),\ldots,c_T(n))f_T
    \end{equation*}
    and 
    \begin{equation*}
        pf_S=p(c_S(1),\ldots,c_S(n))f_S.
    \end{equation*}
    Since $p$ is central, by Schur's lemma it acts by the same constant on any element of $L(\lambda)$.
\end{proof}
Using \cref{independentofpath}, for any $\lambda\vdash n-2f$, we can now choose a convenient path to $\lambda$ and calculate everything we need for the action of $WL[x_1,\ldots,x_n]$ on just the basis element corresponding to this path.
\begin{defi}\label{drunkpath}
    Let $(\lambda,f)\in \Lambda_n$. Denote by $T^{\operatorname{d}}(\lambda)=(T_0\rightarrow T_1 \rightarrow \ldots \rightarrow T_n=\lambda)$ to be the following path to $\lambda$.
      \begin{equation*}
        (\emptyset\rightarrow \Box)^{f}\rightarrow T^{\operatorname{can}}(\lambda),
    \end{equation*}
    where by $ (\emptyset\rightarrow \Box)^{f}$ we mean repeat the path $\emptyset \rightarrow \Box, \quad f$ times and $T^{\operatorname{can}}(\lambda)$ is the canonical path to $\lambda$ as defined after \cref{canpath}. Call $T^{d}(\lambda)$ the drunk path to $\lambda$ and $f$ the level of drunkenness.
\end{defi}
\begin{ex}
    Let $n=10$ and $\lambda=(2,2)\vdash 10-2\cdot3$. Identifying the drunkenness level $f=3$, we have that the drunk path to $(3,(2,2))\in \Lambda_{10}$ is 
    \ytableausetup{boxsize=0.7em}
\[
T^d(\lambda)=
\Big(
(\vcenter{\hbox{$\emptyset$}}
\rightarrow
\vcenter{\hbox{\ydiagram{1}}})^{4}
\rightarrow
\vcenter{\hbox{\ydiagram{2}}}
\rightarrow
\vcenter{\hbox{\ydiagram{2,1}}}
\rightarrow
\vcenter{\hbox{\ydiagram{2,2}}}
\Big)
\]

\end{ex}
With the above path in mind, we want to understand the multiset of $(q,t)$ contents with regards to the drunk path $T^{d}(\lambda)$. From now on, we denote $c_{T^{d}}(\lambda)(i)$ by $c_{\lambda}(i)$, for all $i$. We first fix some notation.
\begin{defi}
    For a partition $\lambda \vdash n$, let 
    \begin{itemize}
        \item The height of $\lambda$, denoted by $h(\lambda)$ to be the number of rows of $\lambda$ -1.
        \item Similarly, the width of $\lambda$, $w(\lambda)$ is the number of columns of $\lambda$ -1.
        \item The diagonal datum of $\lambda$ to be the pair $(D(\lambda),m_{\lambda})$, where 
        \[D(\lambda)=\{-h(\lambda),-h(\lambda)+1,\ldots, w(\lambda)\}\] and $m_{\lambda}:D(\lambda)\rightarrow \N$ is given by $m_{\lambda}(i)=\lvert \{\Box \in \lambda \; : \; \text{ct}(\Box)=i \}\rvert$, where $\text{ct}(\Box)=j-i$ for the box in position $(i,j)$. In other words, $m_{\lambda}(i)$ is the length of the diagonal with boxes of content $i$.
    \end{itemize}
\end{defi}
\begin{prop}\label{multisetofcontents}
    For $\lambda \vdash n-2f$, where $f=0,\ldots,[\frac{n}{2}]$, we have
    \begin{equation*}
        \{c_{\lambda}(i) \; | \; 1\leq i \leq n\}=\{t^{(f+m_\lambda(0))},(t^{-1})^{(f)},(tq^{2i})^{(m_{\lambda}(i))} \; | \; i\in D(\lambda) \setminus \{0\}\}, 
    \end{equation*}
    where the parenthesis in the exponents indicates the multiplicities of the corresponding numbers.
\end{prop}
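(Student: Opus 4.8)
The plan is to read off the multiset of $(q,t)$-contents directly from the two-block structure of the drunk path. By \cref{drunkpath}, $T^{\operatorname{d}}(\lambda)$ is the concatenation of an initial oscillating block $(\emptyset\to\Box)^{f}$ --- which adds the box in position $(1,1)$ and then removes it again, $f$ times each, using $2f$ steps with zero net effect --- followed by the canonical path $T^{\operatorname{can}}(\lambda)$, which uses the remaining $n-2f$ steps and only ever adds boxes, adding each box of $\lambda$ exactly once. So I would split the multiset $\{c_\lambda(i)\mid 1\le i\le n\}$ according to this split of the path and apply \cref{qtcont} to each block separately.

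For the oscillating block, the box in position $(1,1)$ has content $0$, so \cref{qtcont} records $tq^{2\cdot 0}=t$ at each of the $f$ ``add'' steps and $t^{-1}q^{-2\cdot 0}=t^{-1}$ at each of the $f$ ``remove'' steps; this block therefore contributes $t$ with multiplicity $f$ and $t^{-1}$ with multiplicity $f$. For the canonical block, since every step is an ``add'' and the boxes added along it are precisely the boxes of $\lambda$ (each once), the multiset of contents of the added boxes equals the multiset of contents of the boxes of $\lambda$, in which the value $i$ occurs exactly $m_\lambda(i)$ times by definition of the diagonal datum $(D(\lambda),m_\lambda)$. Hence \cref{qtcont} makes this block contribute $tq^{2i}$ with multiplicity $m_\lambda(i)$ for each $i\in D(\lambda)$.

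Finally I would merge the two contributions. The only value produced by both blocks is $t=tq^{2\cdot 0}$: it arises with multiplicity $f$ from the oscillating block and with multiplicity $m_\lambda(0)$ from the principal diagonal of $\lambda$ in the canonical block, for a total multiplicity $f+m_\lambda(0)$; the $f$ copies of $t^{-1}$ and the copies of $tq^{2i}$ for $i\in D(\lambda)\setminus\{0\}$ carry over unchanged. This is exactly the asserted multiset, and the degenerate cases $f=0$ or $\lambda=\emptyset$ (the latter forcing $n=2f$ and $m_\lambda\equiv 0$) are covered verbatim by the same bookkeeping. I do not expect any real obstacle: the argument is a careful but routine application of \cref{qtcont}, and the one point worth flagging explicitly in the write-up is precisely that the diagonal of content $0$ and the oscillating steps both feed into the multiplicity of $t$, which is why it is $f+m_\lambda(0)$ rather than $f$ or $m_\lambda(0)$ alone.
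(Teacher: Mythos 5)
Your proposal is correct and takes essentially the same approach as the paper: split the drunk path into the oscillating prefix and the canonical suffix, apply \cref{qtcont} blockwise, and merge. The only (immaterial) difference is bookkeeping at the step that adds the $(1,1)$ box: the paper groups it with the oscillating block, obtaining $t^{(f+1)}$ there and $t^{(m_\lambda(0)-1)}$ from the remaining canonical steps, while you count it inside the canonical part, obtaining $t^{(f)}$ and $t^{(m_\lambda(0))}$ --- the same multiset.
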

\begin{proof}
    Remember that $T^{\text{std}}(\lambda)=(\emptyset\rightarrow \Box)^{f+1}\rightarrow T(\lambda)$. This means that, for $0\leq i\leq f$, we have $T_{2i}=\emptyset$, and $T_{2i+1}=\Box$, so that 
    \begin{equation}\label{cont1}
        \{c_{\lambda}(i) \; | \; 1\leq i \leq 2f+1=n-\lvert\lambda\rvert+1\}=\{ t,t^{-1},t,t^{-1},\ldots\}=\{t^{(f+1)},(t^{-1})^{(f)}\}.
    \end{equation}
    Now for $2f+2\leq i \leq n$, we have $T^{\operatorname{can}}(\lambda)$ as the remaining path. In every step of the path, this adds a box, giving priority to the highest rows possible. This implies that 
    \begin{equation}\label{cont2}
        \{c_{\lambda}(i) \; | \; 2f+2\leq i \leq n\} = \{(tq^{2i})^{(m_\lambda(i))} \; | \; i\in D(\lambda)\setminus \{0\} \} \cup \{ (t)^{(m_{\lambda}(0)-1)}\},
    \end{equation}
    where we separated the content zero boxes since we added the box in position $(1,1)$ in the previous step. Now combining \cref{cont1},\cref{cont2}, we get 
    \begin{equation*}
        \{c_{\lambda}(i) \; | \; 1\leq i \leq n\} = \{ (t)^{(m_{\lambda}(0)-1+f+1)},(t^{-1})^{(f)},(tq^{2i})^{(m_{\lambda}(i))} \; | \; i\in D(\lambda)\setminus \{0\} \}.
    \end{equation*}
    which completes the proof.
\end{proof}
The next step is to understand how the images of the elementary wheel polynomials $w_k$ act on every simple module $L(\lambda)$ with respect to the Murphy basis element $f_{T^{d}(\lambda)}$ corresponding to the drunk path to $\lambda$. Let 
\begin{equation}\label{X}
    W(\lambda,t)=\frac{\prod_{i=1}^n(1-(c_{\lambda}(i))^{-1}T)}{\prod_{i=1}^n(1-c_{\lambda}(i)T)}.
\end{equation}
This action encodes the action of every $w_k$ as the coefficient of $T^k$ in the formal power series of the rational function $W(\lambda,t)$ in $T$. We bring this function in a reduced form.

\begin{prop}\label{reducedexpressiongeneric}
For generic $q,t\in \C$, we have 
    \begin{equation*}
        W(\lambda,t)=\frac{\prod_{i\in D(\lambda)} (1-t^{-1}q^{-2i}T)^{m_{\lambda}(i)}}{\prod_{i\in{D(\lambda)}} (1-tq^{2i}T)^{m_{\lambda}(i)}},
    \end{equation*}
    where the expression of the rational function of the right hand side is reduced.
\end{prop}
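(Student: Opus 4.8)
The plan is to derive the reduced expression for $W(\lambda,t)$ directly from the multiset of $(q,t)$-contents along the drunk path computed in \cref{multisetofcontents}, and then to read off reducedness from the standing genericity assumptions on $q$ and $t$.

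First I would substitute \cref{multisetofcontents} into the two products defining $W(\lambda,t)$. Writing $c_\lambda^{-1}(i)$ for the inverses of the contents, the multiset $\{c_\lambda(i)\mid 1\le i\le n\}$ of \cref{multisetofcontents} gives
\begin{equation*}
\prod_{i=1}^n\bigl(1-c_\lambda(i)T\bigr)=(1-tT)^{f+m_\lambda(0)}(1-t^{-1}T)^{f}\prod_{i\in D(\lambda)\setminus\{0\}}\bigl(1-tq^{2i}T\bigr)^{m_\lambda(i)},
\end{equation*}
and, since inverting the multiset swaps $t\leftrightarrow t^{-1}$ and $tq^{2i}\leftrightarrow t^{-1}q^{-2i}$,
\begin{equation*}
\prod_{i=1}^n\bigl(1-c_\lambda^{-1}(i)T\bigr)=(1-t^{-1}T)^{f+m_\lambda(0)}(1-tT)^{f}\prod_{i\in D(\lambda)\setminus\{0\}}\bigl(1-t^{-1}q^{-2i}T\bigr)^{m_\lambda(i)}.
\end{equation*}
Forming the quotient $W(\lambda,t)$ and cancelling the common factors $(1-tT)^{f}$ and $(1-t^{-1}T)^{f}$ leaves $(1-t^{-1}T)^{m_\lambda(0)}$ in the numerator and $(1-tT)^{m_\lambda(0)}$ in the denominator; since $t^{-1}q^{-2\cdot 0}=t^{-1}$ and $tq^{2\cdot 0}=t$, these are exactly the $i=0$ terms of the asserted products, so reindexing over all of $D(\lambda)$ yields the displayed formula. (In the degenerate case $\lambda=\emptyset$, when $n=2f$, every product is empty and $W(\lambda,t)=1$, consistent with the formula.)

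For reducedness I would show that the numerator and denominator of the right-hand side share no common linear factor over $\C$. A linear factor of the numerator is $1-t^{-1}q^{-2i}T$ with $i\in D(\lambda)$, and one of the denominator is $1-tq^{2j}T$ with $j\in D(\lambda)$; these coincide exactly when $t^{-1}q^{-2i}=tq^{2j}$, i.e.\ $t^{2}=q^{-2(i+j)}$, i.e.\ $t=\pm q^{-(i+j)}$. Since $q,t$ are generic we have $t\neq\pm q^{a}$ for every $a\in\Z$, so this is impossible and no cancellation occurs. Moreover, since $q$ is not a root of unity, distinct $i\in D(\lambda)$ give distinct linear factors, so the exponents $m_\lambda(i)\ge 1$ faithfully record the multiplicities in each product.

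The whole argument is essentially bookkeeping, and the single point that deserves care is the content-zero diagonal: in \cref{multisetofcontents} its multiplicity appears with the extra summand $f$, recording the $f$ additions of the box $(1,1)$ that are performed and immediately undone along the empty-partition prefix of the drunk path; one must simply check that precisely this surplus $f$ is what cancels against the $(1-tT)^{f}$ and $(1-t^{-1}T)^{f}$ contributed by the matching removals. No genuinely new idea is needed beyond \cref{multisetofcontents} and the genericity of $(q,t)$.
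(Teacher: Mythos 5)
Your proposal is correct and follows essentially the same route as the paper: substitute the multiset of $(q,t)$-contents from \cref{multisetofcontents}, cancel the $f$-fold factors $(1-tT)^f$ and $(1-t^{-1}T)^f$ coming from the drunk-path prefix, and deduce reducedness from $t\neq\pm q^a$ (no numerator/denominator coincidence) together with $q$ not being a root of unity (distinct diagonals give distinct factors). No substantive difference from the paper's argument.
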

\begin{proof}
    Using \cref{multisetofcontents}, we get 
    \begin{align*}
         & W(\lambda,t)= \frac{(1-t^{-1}T)^{m_{\lambda}(0)+f}(1-tT)^f\prod_{i\in D(\lambda)\setminus \{0\}}(1-t^{-1}q^{-2i}T)^{m_{\lambda}(i)}}{(1-tT)^{m_{\lambda}(0)+f}(1-t^{-1}T)^f\prod_{i\in D(\lambda)\setminus \{0\}}(1-tq^{2i}T)^{m_{\lambda}(i)}}= \\
         & = \frac{(1-t^{-1}T)^{m_{\lambda}(0)}\prod_{i\in D(\lambda)\setminus \{0\}}(1-t^{-1}q^{-2i}T)^{m_{\lambda}(i)}}{(1-tT)^{m_{\lambda}(0)}\prod_{i\in D(\lambda)\setminus \{0\}}(1-tq^{2i}T)^{m_{\lambda}(i)}}= \\
         & = \frac{\prod_{i\in D(\lambda)} (1-t^{-1}q^{-2i}T)^{m_{\lambda}(i)}}{\prod_{i\in{D(\lambda)}} (1-tq^{2i}T)^{m_{\lambda}(i)}}.
         \end{align*}
The expression is reduced since $t\neq \pm q^a$ for $a\in \Z$, as if we assume that for some $i,j\in D(\lambda)$, we have $tq^{2i}=t^{-1}q^{2j}$, this would imply $t=\pm q^a$, for $a\in \Z$. Further, since $q$ is not a root of unity, we have $tq^{2i}\neq tq^{2j}$, for any $i\neq j$.
\end{proof}
Now that we understand the action of elementary wheel polynomials, we are ready for the main part of the proof. For $p\in WL[x_1,\ldots,x_n]$, we simplify the notation $p(c_\lambda(1),\ldots,c_{\lambda}(n))$ to $p(\lambda)$.
\begin{prop}\label{polyseparatesimples}
    Let $(\lambda,f),(\mu,f')\in \Lambda_n$ such that $\lambda \neq \mu$. There exists $p\in WL[x_1,\ldots,x_n]$, such that $p(\lambda)\neq p(\mu)$.
\end{prop}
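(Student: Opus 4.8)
The plan is to exploit the reduced form of the generating function $W(\lambda,t)$ obtained in \cref{reducedexpressiongeneric}, which encodes the scalars by which the elementary wheel polynomials $w_k$ act on $L(\lambda)$. Since the rational function is reduced and $q$ is not a root of unity, the set of poles $\{t^{-1}q^{-2i}: i\in D(\lambda)\}$ together with their orders $\{m_\lambda(i)\}$ is precisely the diagonal datum $(D(\lambda),m_\lambda)$ of $\lambda$ (repackaged multiplicatively and genericity guarantees distinct poles). Thus the entire sequence $(w_k(\lambda))_{k\geq 0}$ determines $W(\lambda,t)$ as a reduced rational function, which in turn determines $(D(\lambda),m_\lambda)$, which by the remark after \cref{qtcont} determines $\lambda$ uniquely. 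Contrapositively, if $\lambda\neq\mu$ then the two generating functions $W(\lambda,t)$ and $W(\mu,t)$ differ, hence some coefficient $w_k(\lambda)\neq w_k(\mu)$; taking $p=w_k$ (more precisely its image in $WL[x_1,\ldots,x_n]$) gives the desired separating element.

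First I would make precise the passage from the sequence of coefficients to the rational function: a formal power series with nonzero constant term that equals a rational function $P(T)/Q(T)$ in lowest terms with $Q(0)=1$ is uniquely determined by $P/Q$, and conversely the reduced form $P/Q$ is recovered from finitely many coefficients (bounded in terms of $n$, since $\deg P,\deg Q\leq n$). So $W(\lambda,t)=W(\mu,t)$ as rational functions if and only if $w_k(\lambda)=w_k(\mu)$ for all $k$. Next I would invoke \cref{reducedexpressiongeneric} to identify the denominator of $W(\lambda,t)$ in lowest terms as $\prod_{i\in D(\lambda)}(1-tq^{2i}T)^{m_\lambda(i)}$: the genericity hypotheses ($t\neq\pm q^a$ for $a\in\Z$ and $q$ not a root of unity) ensure that no numerator factor $(1-t^{-1}q^{-2i}T)$ cancels a denominator factor $(1-tq^{2j}T)$, and that the roots $tq^{2i}$ for distinct $i\in D(\lambda)$ are pairwise distinct. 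Hence from $W(\lambda,t)$ one reads off, for each $i\in D(\lambda)$, the exponent $m_\lambda(i)$ as the multiplicity of the pole at $T=(tq^{2i})^{-1}$, and $D(\lambda)$ as the set of such $i$. Finally, $D(\lambda)=D(\mu)$ and $m_\lambda=m_\mu$ force $\lambda=\mu$ by the determinacy of partitions from their diagonal data.

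Assembling: given $\lambda\neq\mu$, the diagonal data differ, so $W(\lambda,t)\neq W(\mu,t)$ as reduced rational functions (either the sets $D(\cdot)$ differ, or some multiplicity differs), hence the coefficient sequences differ at some index $k$, i.e.\ $w_k(\lambda)\neq w_k(\mu)$. Since $w_k\in C$, its image under $\pi_n$ lies in $WL[x_1,\ldots,x_n]$, and by the definition of the generating function together with \cref{JMactseminormal} this image acts on the Murphy basis vector $f_{T^d(\lambda)}$ by the scalar $w_k(c_\lambda(1),\ldots,c_\lambda(n))=w_k(\lambda)$; by \cref{independentofpath} this scalar is independent of the chosen path, so $p=w_k$ does the job. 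The only genuinely delicate point is the non-cancellation claim underpinning "reduced," but this is exactly what \cref{reducedexpressiongeneric} already supplies, so no real obstacle remains; the rest is bookkeeping with rational functions and the combinatorial fact that a partition is recovered from its diagonal datum.
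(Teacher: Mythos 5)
Your proposal is correct and follows essentially the same route as the paper: invoke \cref{reducedexpressiongeneric}, note that equality of all the scalars $w_k(\lambda)=w_k(\mu)$ forces $W(\lambda,t)=W(\mu,t)$ as reduced rational functions, read off the diagonal datum $(D(\lambda),m_\lambda)$ from the (distinct, non-cancelling) factors, and conclude $\lambda=\mu$, so contrapositively some $w_k$ separates. The extra care you take in justifying that the coefficient sequence determines the reduced rational function is a sound (and implicit in the paper) piece of bookkeeping, not a departure in method.
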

\begin{proof}
We prove the contrapositive. That is, assuming that for all $p\in WL[x_1,\ldots,x_n]$, it holds that $p(\lambda)=p(\mu)$, we show that $\lambda=\mu$. Since this holds for all such polynomials, it holds in particular for all the elementary wheel polynomials $w_k$ and thus $W(\lambda,t)=W(\mu,t)$. Using the reduced expressions obtained in \cref{reducedexpressiongeneric}, we get
\begin{align*}
   &\frac{\prod_{i\in D(\lambda)} (1-t^{-1}q^{-2i}T)^{m_{\lambda}(i)}}{\prod_{i\in{D(\lambda)}} (1-tq^{2i}T)^{m_{\lambda}(i)}}==\frac{\prod_{i\in D(\mu)} (1-t^{-1}q^{-2i}T)^{m_{\mu}(i)}}{\prod_{i\in{D(\mu)}} (1-tq^{2i}T)^{m_{\mu}(i)}}
\end{align*}
where both rational functions are reduced. This means that we may equate numerators and denominators. Equating the denominators yields that
\begin{align*}
   {\prod_{i\in{D(\lambda)}} (1-tq^{2i}T)^{m_{\lambda}(i)}}={\prod_{i\in{D(\mu)}} (1-tq^{2i}T)^{m_{\mu}(i)}}
\end{align*}
where both polynomials are decomposed into irreducible factors. We first show that $D(\lambda)=D(\mu)$. Without loss of generality, assume that there exists $x\in D(\lambda)\setminus D(\mu)$. This would imply that
\[
(1-tq^{2x}T)\mid  {\prod_{i\in{D(\lambda)}} (1-tq^{2i}T)^{m_{\lambda}(i)}},
\]
while
\[
(1-tq^{2x}T)\nmid {\prod_{i\in{D(\mu)}} (1-tq^{2i}T)^{m_{\mu}(i)}}
\]
yielding a contradiction. Thus $D(\lambda)=D(\mu)$. It is then immediate to see that for all $i\in D(\lambda)=D(\mu)$, we have
\begin{equation*}
    (1-tq^{2i}T)^{m_{\lambda}(i)}=(1-tq^{2i}T)^{m_{\mu}(i)},
\end{equation*}
which in turn implies that $m_{\lambda}(i)=m_{\mu}(i)$ for all $i\in D(\lambda)=D(\mu)$. This means that the partitions $\lambda,\mu$ have the same diagonals and the diagonals corresponding to $i\in D(\lambda)=D(\mu)$ have the same length. Since diagonals together with their length completely determine the shape of the partition, this implies that $\lambda=\mu$ and hence $f=f'$ as well.
\end{proof} 
The only thing left is to prove a point separation argument. Notice that the following is a direct generalization of the point separation lemma used in \cite{JungKim}. In fact, their proof goes through without any modifications.
\begin{lem}\label{separationlemma}
    Let $A$ be a $\C$ subalgebra of $\C[x_1^{\pm 1},\ldots,x_n^{\pm 1}]$. Further, assume that we have $m\in \N$ tuples
    \[
    (c_{1,1},\ldots c_{1,n}),\ldots, (c_{m,1},\ldots , c_{m,n}) \in (\C^{\times})^n.
    \]
    If for every $1\leq i\neq j\leq m$, there exists $p\in A$ such that $p(i)\neq p(j)$, where $p(i)=p(c_{i,1},\ldots,c_{i,n})$, then there exists a family $p_1,\ldots,p_m\in A$ such that
    \begin{equation}
      \det  \begin{bmatrix}
            p_1(1) &  p_1(2) &\ldots & p_1(m) \\
            p_2(1) & p_2(2) & \ldots & p_2(m) \\
            \vdots & \ldots & \ddots & \\
            p_m(1) & p_m(2) & \ldots & p_m(m)
        \end{bmatrix} \neq 0
    \end{equation}
\end{lem}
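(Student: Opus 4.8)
The plan is to prove \cref{separationlemma} by induction on $m$, the number of points, exactly in the spirit of the corresponding lemma in \cite{JungKim}. The statement asserts that a subalgebra $A$ which separates the $m$ points $(c_{i,1},\ldots,c_{i,n})$ contains a family $p_1,\ldots,p_m$ whose ``evaluation matrix'' $\bigl(p_j(i)\bigr)_{i,j}$ is invertible; equivalently, the evaluation map $A \to \C^m$, $p \mapsto (p(1),\ldots,p(m))$, is surjective. So I would reformulate the goal right away: it suffices to show that $\operatorname{ev}\colon A \to \C^m$ defined by $\operatorname{ev}(p) = (p(1),\ldots,p(m))$ has image all of $\C^m$, since then picking $p_1,\ldots,p_m \in A$ whose images form a basis of $\C^m$ gives a matrix $\bigl(p_j(i)\bigr)$ that is (the transpose of) a change-of-basis matrix, hence has nonzero determinant.

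For the induction, the base case $m = 1$ is trivial: take $p_1 = 1 \in A$ (constants lie in any $\C$-subalgebra), so $\operatorname{ev}(1) = (1) \neq 0$. For the inductive step, suppose the image $V$ of $\operatorname{ev}\colon A \to \C^m$ is a proper subspace of $\C^m$; I want a contradiction with the separation hypothesis. The key observation is that $V$ is not merely a linear subspace but a \emph{subalgebra} of $\C^m$ (with coordinatewise multiplication), because $\operatorname{ev}$ is an algebra homomorphism and it contains $\mathbf{1} = (1,\ldots,1)$. A proper unital subalgebra of $\C^m$ fails to separate some pair of coordinates: concretely, if $V \subsetneq \C^m$ then there is an idempotent analysis or a direct argument showing there exist $i \neq j$ with $v_i = v_j$ for \emph{all} $v \in V$ — otherwise $V$ would separate all coordinate pairs, and a standard Lagrange-interpolation / partition-of-unity argument (building coordinate idempotents from separating elements) would force $V = \C^m$. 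This last sub-claim is really the heart of the matter and is where I would spend the most care: given that for every pair $i \neq j$ some $v \in V$ has $v_i \neq v_j$, one constructs for each $k$ an element of $V$ equal to $1$ in coordinate $k$ and $0$ elsewhere, by taking suitable polynomials in the separating elements (for each $l \neq k$ pick $v^{(l)} \in V$ with $v^{(l)}_k \neq v^{(l)}_l$, replace it by $\frac{v^{(l)} - v^{(l)}_l \mathbf 1}{v^{(l)}_k - v^{(l)}_l} \in V$ which is $1$ at $k$ and $0$ at $l$, then multiply these over all $l \neq k$); the resulting idempotents span $\C^m$, so $V = \C^m$.

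Then the contradiction is immediate: if $V = \operatorname{im}(\operatorname{ev})$ were proper, we would get indices $i \neq j$ with $p(i) = p(j)$ for every $p \in A$, contradicting the hypothesis that $A$ separates the $i$-th and $j$-th points. Hence $\operatorname{ev}$ is surjective, and choosing $p_1,\ldots,p_m \in A$ with $\{\operatorname{ev}(p_r)\}_{r=1}^m$ a basis of $\C^m$ yields the desired nonvanishing determinant. I expect the main obstacle to be purely bookkeeping: making the idempotent-construction argument clean, and making sure the constants and products used at each step genuinely lie in $A$ (they do, since $A$ is a $\C$-subalgebra, so it is closed under $\C$-linear combinations and products), rather than any deep difficulty — this is why I would, as the paper notes, largely follow \cite{JungKim} verbatim. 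One small point worth flagging: the hypothesis that the tuples lie in $(\C^\times)^n$ rather than $\C^n$ plays no role in this particular lemma (it matters only so that evaluation of Laurent polynomials makes sense), so the proof is genuinely just the separation-implies-surjectivity argument above.
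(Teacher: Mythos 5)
Your proof is correct, but it takes a genuinely different route from the paper. The paper follows the Jung--Kim argument verbatim: induction on $m$, row-reducing the $(m-1)\times(m-1)$ matrix from the inductive hypothesis to unitriangular form, then assuming every bordered $m\times m$ determinant vanishes, expanding by minors to write $p(m)=\sum_i a_i p(i)$ for all $p\in A$, and multiplying by the $p_j$'s (and using nonzero constants in $A$) to force all $a_i=0$, a contradiction. You instead argue directly: the evaluation map $\operatorname{ev}\colon A\to\C^m$ is a unital $\C$-algebra homomorphism, so its image $V$ is a unital subalgebra of $\C^m$ under coordinatewise multiplication; the separation hypothesis says $V$ separates coordinates, and your Lagrange-interpolation construction (normalize a separating element to be $1$ at $k$ and $0$ at $l$, then multiply over $l\neq k$) produces the coordinate idempotents, so $V=\C^m$ and any $p_1,\ldots,p_m$ mapping to a basis give a nonzero determinant. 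The construction is airtight: the normalizations stay in $V$ because $V$ is a $\C$-subspace containing $\mathbf 1$, and the products stay in $V$ because it is closed under multiplication. Your approach buys a cleaner, more structural proof with no determinant bookkeeping (indeed the induction you announce at the outset is never actually used -- the argument is direct), and it makes transparent that the real content is ``separating points $\Rightarrow$ surjective evaluation''; the paper's version buys only that it is a line-by-line import of the cited lemma from Jung--Kim. Your closing remark that the hypothesis $(\C^\times)^n$ enters only to make evaluation of Laurent polynomials well defined is also accurate and matches the paper's usage.
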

\begin{proof}
    We prove it by induction on $m$. Let $(c_1,\ldots,c_n)\in (\C^{\times})^n$. Any nonzero constant polynomial $p\in A$ is such that $p(c_1,\ldots,c_n)\neq 0$. (Non-zero constant polynomials are in $A$ since it is a $\C$ subalgebra) For inductive step, assume $m>1$ and that our assumption holds for $1\leq i \leq m-1$. Consider the first $m-1$ tuples . By our assumption, for any $1\leq i\neq j \leq m-1$ there exists $p\in A$ such that $p(i)\neq p(j)$. By the inductive hypothesis, there exists a family $p_1,\ldots,p_{m-1}\in A$, such that \begin{equation}\label{det=1}
      \det  \begin{bmatrix}
            p_1(1) &  p_1(2) &\ldots & p_1(m-1) \\
            p_2(1) & p_2(2) & \ldots & p_2(m-1) \\
            \vdots & \ldots & \ddots & \\
            p_{m-1}(1) & p_{m-1}(2) & \ldots & p_{m-1}(m-1)
        \end{bmatrix} \neq 0
    \end{equation}
    By applying elementary row operations, we can bring the above matrix to an upper uni-triangular form, thus we may assume further that 
    \begin{align*}
        & p_i(j)=0, \quad 1\leq j<i\leq m-1 \\
        & p_i(i)=1, \quad 1\leq i \leq m-1.
    \end{align*}
    In particular, we may assume that the determinant of the above matrix is equal to $1$.
    \par Assume that for every $p\in A$, 
    \begin{equation*}
      d(p):=\det  \begin{bmatrix}
            p_1(1) &  p_1(2) &\ldots & p_1(m) \\
            p_2(1) & p_2(2) & \ldots & p_2(m) \\
            \vdots & \ldots & \ddots & \\
            p(1) & p(2) & \ldots & p(m)
        \end{bmatrix} \neq 0
    \end{equation*}
    Expand by minors, taking into account \cref{det=1} to get
    \begin{equation}\label{p(n)}
        p(n)=\sum_{i=1}^{n-1}a_ip(i)
    \end{equation}
    for some $a_i\in \C$, independent of $p$. Since $A$ is a subalgebra, we have $p_{n-1}p\in A$, so that
    \begin{equation*}
        p_{n-1}p(n)=a_1p_{n-1}p(1)+\ldots+a_{n-1}p_{n-1}p(n-1)=a_{n-1}p(n-1),
    \end{equation*}
    since $p_{n-1}(i)=0$, for all $i<n-1$ and $p_{n-1}(n-1)=1$. Assume now that $p_{n-1}(n)\neq 0$. Then we may solve for $p(n)$ to get
    \[
    p(n)=\frac{a_{n-1}}{p_{n-1}(n)}p(n-1), \quad \forall p\in A.
    \]
    Choosing a non-zero constant polynomial $p\in A$, we get that $\frac{a_{n-1}}{p_{n-1}(n)}=1$, yielding that $p(n)=p(n-1)$ for every $p\in A$. This contradicts our separation assumption for $i=n-1,j=n$. Hence $p_{n-1}(n)=0$, yielding $a_{n-1}p(n-1)=0$ for every $p\in A$. Choosing $p$ non-zero constant again, gives $a_{n-1}=0$. Repeating the above argument with $p_j$ in place of $p_{n-1}$, we get that $a_j=0$, for every $j=1,\ldots,n-1$. Using \cref{p(n)}, we get
    \begin{equation*}
        p(n)=0, \quad \forall p\in A,
    \end{equation*}
    a clear contradiction. This means that there exists $p\in A$, such that $d(p)\neq 0$.
\end{proof}
We are ready to complete the proof of \cref{centergeneric}.
\begin{proof} [Proof of \cref{centergeneric}]
    Apply the separation \cref{separationlemma} to \[
    A=\{p\in \C[x_1^{\pm 1},\ldots, x_n^{\pm 1}]^{S_n} \; : \; p(x_1,x_1^{-1},\ldots,x_n)=p(1,1,\ldots,x_n)\}\] and 
    \[
    (c_{i,1},\ldots,c_{i,n})=(c_\lambda(1),\ldots,c_\lambda(n))
    \]
    for $m=\lvert \Lambda_n\rvert=\sum_{f=0}^{[\frac{n}{2}]} p(n-2f)$. This implies that there is a family of polynomials 
    \[
    \{p_{\lambda} : \lambda \in\Lambda_n\}\subseteq A
    \]
    such that the matrix 
    \[
    \big ( p_{\lambda}(\mu)\big )_{\lambda,\mu\in \Lambda_n}
    \]
    is invertible, or equivalently the rows/columns of this matrix are linearly independent. Assume that 
    \[
    P=\sum_{\lambda\in \Lambda_n} a_\lambda p_{\lambda} = 0,
    \]
    where $a_{\lambda}\in \C$. Then on every simple module $L(\mu)$, we have that $p_{\lambda}$ acts as $p_{\lambda}(\mu)$, while $P$ acts as zero. Thus 
    \[
    \sum_{\lambda\in \Lambda_n} a_{\lambda}p_{\lambda}(\mu)=0.
    \]
    But the columns of the matrix $ \big ( p_{\lambda}(\mu)\big )_{\lambda,\mu\in \Lambda_n}$ are linearly independent, yielding $a_{\lambda}=0$, for all $\lambda\in \Lambda_n$. Therefore, the set  $\{p_{\lambda} : \lambda \in \Lambda_n\}\subseteq WL[x_1,\ldots,x_n]$ is linearly independent, proving that 
    \[
    \dim_{\C}WL[x_1,\ldots,x_n]\geq \dim_\C Z(B_n(q,t)).
    \]
 Therefore, we must have $Z(B_n(q,t))=WL[x_1,\ldots,x_n]$, completing the proof.
\end{proof}
We present some immediate corollaries of \cref{centergeneric}.

    \begin{cor}\label{GZgenbyJM}
        Suppose that $q,t\in\C$ are generic. The Gelfand-Zeitlin algebra $GZ(n)=\langle Z(1),\ldots,Z(n)\rangle$ generated by all the centers $Z(i)=Z(B_i(q,t))$ is a maximal commutative subalgebra of $B_n(q,t)$ generated by the Jucys Murphy elements and their inverses $x_1=t, \; x_{i+1}=s_ix_is_i$, for  $i=1,\ldots,n-1$. 
 In formulae,
        \[
        GZ(n)=\langle x_1^{\pm 1},\ldots,x_n^{\pm 1}\rangle.
        \]
    \end{cor}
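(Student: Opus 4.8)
The plan is to combine \cref{centergeneric}, applied at every level of the tower, with the fact that the tower $B_1(q,t)\subseteq\cdots\subseteq B_n(q,t)$ is multiplicity free. By definition $GZ(n)=\langle Z(1),\ldots,Z(n)\rangle$, so the inclusion $\langle x_1^{\pm1},\ldots,x_n^{\pm1}\rangle\subseteq GZ(n)$ is already recorded in the corollary preceding \cref{JMactseminormal}. For the reverse inclusion, \cref{centergeneric} tells us that for each $i=1,\ldots,n$ we have $Z(i)=Z(B_i(q,t))=WL[x_1,\ldots,x_i]$, which is by construction a subalgebra of $\C[x_1^{\pm1},\ldots,x_i^{\pm1}]\subseteq\C[x_1^{\pm1},\ldots,x_n^{\pm1}]$ (using the embedding $B_i(q,t)\hookrightarrow B_n(q,t)$, under which $x_j\in B_i(q,t)$ for $j\le i$). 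Therefore $Z(i)\subseteq\langle x_1^{\pm1},\ldots,x_n^{\pm1}\rangle$ for every $i$, and since the $Z(i)$ generate $GZ(n)$ as an algebra, we conclude $GZ(n)\subseteq\langle x_1^{\pm1},\ldots,x_n^{\pm1}\rangle$. Combining the two inclusions gives $GZ(n)=\langle x_1^{\pm1},\ldots,x_n^{\pm1}\rangle$.

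It remains to justify the maximal-commutativity claim. Commutativity of $\langle x_1^{\pm1},\ldots,x_n^{\pm1}\rangle$ was already observed after \cref{jmbmw}. For maximality, I would invoke the general theory of multiplicity-free towers together with \cref{eqcharofgz}: since the tower $B_1(q,t)\subseteq\cdots\subseteq B_n(q,t)$ is multiplicity free (by \cref{branching}), the Gelfand-Zeitlin algebra $GZ(n)$ is a maximal commutative subalgebra of $B_n(q,t)$ — indeed, by \cref{eqcharofgz} it is exactly the subalgebra diagonalized in every GZ basis, and $\dim_\C GZ(n)=\sum_{(\lambda,f)\in\Lambda_n}\lvert T^{\operatorname{ud}}_n(\lambda)\rvert=\dim_\C B_n(q,t)^{\mathrm{diag}}$ equals the sum of the dimensions of the simple modules, which forces maximality by a standard double-commutant argument. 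This observation was already stated (without the identification with the JM elements) right after \cref{GZalgebra}. Hence the identification $GZ(n)=\langle x_1^{\pm1},\ldots,x_n^{\pm1}\rangle$ upgrades this to the statement that the JM elements and their inverses generate a maximal commutative subalgebra.

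The one point requiring a little care — and the likeliest source of friction — is checking that the embedding $B_i(q,t)\hookrightarrow B_n(q,t)$ genuinely carries $Z(B_i(q,t))$ into $\langle x_1^{\pm1},\ldots,x_n^{\pm1}\rangle$ rather than merely into some abstract isomorphic copy; this is where one uses that the JM elements are compatible with the tower, i.e. $x_j\in B_i(q,t)$ for $j\le i$ and these are literally the JM elements of $B_i(q,t)$, so that \cref{centergeneric} applied to $B_i(q,t)$ describes $Z(i)$ in terms of the very same elements $x_1,\ldots,x_i$. Everything else is a formal consequence of \cref{centergeneric} and the multiplicity-freeness already established, so the proof is short.
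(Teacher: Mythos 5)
Your proposal is correct and follows essentially the same route as the paper: the paper's proof simply notes that elements of $GZ(n)$ are generated by the $Z(i)$, each of which equals $WL[x_1,\ldots,x_i]$ by \cref{centergeneric} applied at level $i$ (using that the JM elements are compatible with the tower), and hence lies in $\langle x_1^{\pm 1},\ldots,x_n^{\pm 1}\rangle$, with the reverse inclusion and the maximal-commutativity coming from the earlier corollary and the multiplicity-freeness remark after \cref{GZalgebra}. Your write-up just spells out these same ingredients more explicitly.
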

    \begin{proof}
        $GZ(n)$ is generated by elements of the form $f_1\cdots f_n$, where $f_i\in Z(i)$. Since these are given by Laurent polynomials in the JM elements, the result follows.
    \end{proof}
    In particular, we can now identify the Murphy basis of every simple $L(\lambda)$ of $B_n(q,t)$ as the GZ basis.
    \begin{cor}\label{OVlikeapproachworks}
        For any $(\lambda,f)\in \Lambda_n$, the Murphy basis of $L(\lambda)$ is (up to scalars) identified with the GZ basis of $L(\lambda)$.
    \end{cor}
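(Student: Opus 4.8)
The plan is to deduce this from \cref{GZgenbyJM} together with the uniqueness clause in \cref{eqcharofgz}. The key point is that both the Murphy basis $\{f_T\mid T\in T^{\operatorname{ud}}_n(\lambda)\}$ and the GZ basis $\{v_T\mid T\in T^{\operatorname{ud}}_n(\lambda)\}$ of $L(\lambda)$ are simultaneous eigenvector bases for the same maximal commutative subalgebra $GZ(n)$, and are indexed by the same combinatorial set in a way that is compatible with the eigenvalues; once this is established, the uniqueness part of \cref{eqcharofgz} forces them to coincide up to scalars, path by path.

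First I would record that by \cref{GZgenbyJM} we have $GZ(n)=\langle x_1^{\pm1},\ldots,x_n^{\pm1}\rangle$, so that an element of $L(\lambda)$ is a simultaneous eigenvector for all of $GZ(n)$ if and only if it is a simultaneous eigenvector for $x_1,\ldots,x_n$. By \cref{JMactseminormal}, the Murphy basis element $f_T$ satisfies $x_if_T=c_T(i)f_T$ for all $i$, hence $f_T$ is a simultaneous eigenvector for $GZ(n)$, with the tuple of eigenvalues $(c_T(1),\ldots,c_T(n))$. Next I would argue that these tuples are pairwise distinct as $T$ ranges over $T^{\operatorname{ud}}_n(\lambda)$: this is the content of the Okounkov--Vershik philosophy for this chain, and it follows because the branching graph is multiplicity free (\cref{branching}), so an iterated restriction of $L(\lambda)$ down the tower $B_n(q,t)\supseteq B_{n-1}(q,t)\supseteq\cdots\supseteq B_1(q,t)$ breaks it into one-dimensional pieces indexed precisely by the paths $T$, and each such one-dimensional piece is the common eigenline of $Z(1),\ldots,Z(n)$ — equivalently of $x_1,\ldots,x_n$ — with a tuple of eigenvalues that determines the path. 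In particular the eigenvalue tuple $(c_T(1),\ldots,c_T(n))$ determines $T$, and hence the common eigenspaces of $GZ(n)$ in $L(\lambda)$ are all one-dimensional, spanned by the $f_T$.

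Then I would invoke \cref{eqcharofgz}: since the GZ basis $\{v_T\}$ is by definition the basis of simultaneous $GZ(n)$-eigenvectors, and since we have just shown each joint eigenspace of $GZ(n)$ in $L(\lambda)$ is a line spanned by some $f_T$, each $v_T$ must be a scalar multiple of the unique $f_{T'}$ lying in the same eigenline. Matching the indexing — the GZ basis element $v_T$ corresponds to the path $T$ obtained by the iterated restriction, and the Murphy element $f_T$ has eigenvalue tuple $(c_T(1),\ldots,c_T(n))$, which by \cref{qtcont} is exactly the tuple attached to the same path $T$ — gives $v_T\in\C^\times f_T$ for every $T\in T^{\operatorname{ud}}_n(\lambda)$, which is the assertion.

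The main obstacle is the pairwise-distinctness of the content tuples $(c_T(1),\ldots,c_T(n))$ for $T\in T^{\operatorname{ud}}_n(\lambda)$, since without it the joint eigenspaces of $GZ(n)$ could be higher-dimensional and the two bases need not be related so rigidly. One clean way to see distinctness: the last partition $\lambda=T_n$ is visible from the multiset of $(q,t)$-contents (the numerator/denominator bookkeeping of \cref{reducedexpressiongeneric} recovers the diagonal datum $(D(\lambda),m_\lambda)$, hence $\lambda$), and then one reconstructs $T_{n-1},T_{n-2},\ldots$ inductively: knowing $T_k$ and the content $c_T(k)=tq^{2i}$ or $t^{-1}q^{-2i}$, genericity of $q,t$ forces a unique value of $i$ and a unique choice of whether a box of content $i$ was added or removed — because by the multiplicity-free branching \cref{branching}, $T_{k-1}$ is obtained from $T_k$ by adding or removing a single box, and distinct such boxes have distinct contents while an added versus removed box of a given content are distinguished by the sign of the exponent together with $t\neq\pm q^a$. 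Thus the whole path $T$ is determined by its content tuple, completing the argument.
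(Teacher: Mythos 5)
Your argument is correct and follows essentially the same route the paper intends: the paper treats this corollary as immediate from \cref{GZgenbyJM}, \cref{JMactseminormal} and the uniqueness clause of \cref{eqcharofgz}, exactly the ingredients you combine. The only difference is that you explicitly verify that distinct paths $T\in T^{\operatorname{ud}}_n(\lambda)$ have distinct content tuples under genericity (so the joint $GZ(n)$-eigenspaces are lines), a point the paper leaves implicit inside \cref{eqcharofgz}; this is a welcome, and correct, addition.
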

    
\begin{rem}\label{remarkonOV}
    By \cref{OVlikeapproachworks}, we have an Okounkov-Vershik like approach to the representations of $B_n(q,t)$, see \cite{OV}. Since the Gelfand-Tsetlin basis elements are completely determined up to scalars by the corresponding eigenvalues, we can build up any simple module of $B_n(q,t)$ by the simple modules of $B_{n-1}(q,t)$, together with the action of the last JM element on the basis elements.
    \par We note that a similar approach to the finite dimensional representations has already appeared in \cite{IsaevOgievetsky}, but the authors have not included proofs for some of the main statements.
\end{rem}

\par To make \cref{remarkonOV} explicit, since $x_n\in C(B_n(q,t),B_{n-1}(q,t))$, we have that for any simple module $L(\lambda)$, the last JM element still acts on its restriction to $B_{n-1}(q,t)$, since
\[
x_n\in \End_{B_{n-1}(q,t)}\left(\Res^{B_n(q,t)}_{B_{n-1}(q,t)}L(\lambda)\right).
\] In particular, the summand $L(\lambda\pm\Box)$ in the decomposition of $L(\lambda)$ regarded as a $B_{n-1}(q,t)$ module is the eigenspace of $x_n$ corresponding to the $(q,t)$-content of the box added or removed from $\lambda$ to get $\lambda\pm \Box$.
\par For clarity, we update the first three levels of the branching graph with this information. As an example, the basis element of $L(\Box)$ as a simple $B_3(q,t)$-module corresponding to the path \[
T =\left( 
\vcenter{\hbox{$\emptyset$}} \rightarrow
\vcenter{\hbox{\ydiagram{1}}} \rightarrow
\vcenter{\hbox{\ydiagram{1,1}}} \rightarrow
\vcenter{\hbox{\ydiagram{1}}}\right)
\]
is completely determined by the vector $(t,tq^{-2},t^{-1}q^2)$.
    \ytableausetup{boxsize=1em}
\begin{center}
\begin{tikzpicture}[yscale=2, xscale=2]
\node (0) at (0,0) {$\emptyset$};
\node (1) at (0,-1) {$\ydiagram{1}$};
\node (11) at (1,-2) {$\ydiagram{1,1}$};
\node (111) at (2,-3) {$\ydiagram{1,1,1}$};
\node (2) at (-1,-2) {$\ydiagram{2}$};
\node(00) at (0,-2) {$\emptyset$};
\node (001) at (0,-3) {$\ydiagram{1}$};
\node (21) at (1,-3) {$\ydiagram{2,1}$};
\node (3) at (-1.5,-3) {$\ydiagram{3}$};

\draw[black] (0)--node[left]{$t$} (1);
\draw[red] (1) --node[right,above,sloped]{$tq^{-2}$} (11)
  (1)[red] --node[left]{$t^{-1}$} (00)
  (1)[red] --node[left,above,sloped]{$tq^{2}$} (2);
\draw (00)[blue] --node[right,xshift=0.1em,yshift=0.3em]{$t$} (001);
\draw[blue] (2) --node[left,below,sloped]{$t^{-1}q^{-2}$} (001)
 (2) --node[left,above,sloped]{$tq^{4}$} (3)
 (2) --node[right,above,sloped,xshift=-2em]{$tq^{-2}$} (21);
\draw[blue] (11) --node[left,above,sloped,xshift=1em]{$t^{-1}q^{2}$} (001)
 (11) --node[right]{$tq^{2}$} (21)
 (11) --node[right,above,sloped]{$tq^{-4}$} (111);
\end{tikzpicture}
\end{center}
    \begin{cor}\label{evaluationlevelitholds}
        For $q,t\in \C$ generic, it holds
        \[
        WL[x_1,\ldots,x_n]=\C[e_n^{\pm1}][w_1,w_2,\ldots]=\C[e_n^{\pm1}][p_1^{-},p_2^{-},\ldots].
        \]
    \end{cor}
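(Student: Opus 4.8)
The plan is to deduce the three displayed algebras coincide directly from \cref{centergeneric} and \cref{powersumelemgenthesame}, the only genuine task being bookkeeping about which polynomials are formal and which are evaluated at the Jucys--Murphy elements $x_1=t,\,x_2=s_1^2,\,\ldots,\,x_n=s_{n-1}\cdots s_1^2\cdots s_{n-1}$ of $B_n(q,t)$. Write $\mathrm{ev}\colon C\to B_n(q,t)$ for the restriction of the surjection $\pi_n$ to the center $C=Z(aB_n(q,t))$; this is a $\C$-algebra homomorphism with image $\mathrm{ev}(C)=WL[x_1,\ldots,x_n]$, and it sends the formal variable $x_i$ to the Jucys--Murphy element $x_i$. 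Since $e_n=x_1\cdots x_n$ and $e_n^{-1}$ both lie in $C$, the element $\mathrm{ev}(e_n^{-1})$ is an inverse of $\mathrm{ev}(e_n)=x_1\cdots x_n$ in $B_n(q,t)$ and lies in $WL[x_1,\ldots,x_n]$; similarly each $\mathrm{ev}(w_k)\in WL[x_1,\ldots,x_n]$.

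First I would record the inclusion
\[
\C[\mathrm{ev}(e_n)^{\pm1}]\bigl[\mathrm{ev}(w_1),\mathrm{ev}(w_2),\ldots\bigr]\subseteq WL[x_1,\ldots,x_n],
\]
which is immediate from the previous paragraph since $WL[x_1,\ldots,x_n]$ is a subalgebra. Applying the homomorphism $\mathrm{ev}$ to the identity of subalgebras of $C$ from \cref{powersumelemgenthesame}, and using that a homomorphism carries the subalgebra generated by a set onto the subalgebra generated by the image of that set (and carries $e_n^{\pm1}$ to $\mathrm{ev}(e_n)^{\pm1}$), the left-hand side equals $\C[\mathrm{ev}(e_n)^{\pm1}][\mathrm{ev}(p_1^-),\mathrm{ev}(p_2^-),\ldots]$.

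For the reverse inclusion I would invoke the refined assertion of \cref{centergeneric}: for generic $q,t$ the algebra $WL[x_1,\ldots,x_n]=Z(B_n(q,t))$ is finite dimensional, with a $\C$-basis given by $\{\mathrm{ev}(w_{n(\lambda)}) : \lambda\in\Lambda_n\}$ for a suitable family of elementary wheel polynomials. Hence every element of $WL[x_1,\ldots,x_n]$ is a finite $\C$-linear combination of elements $\mathrm{ev}(w_k)$, and so lies in $\C[\mathrm{ev}(w_1),\mathrm{ev}(w_2),\ldots]\subseteq\C[\mathrm{ev}(e_n)^{\pm1}][\mathrm{ev}(w_1),\ldots]$. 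Combining the two inclusions gives the asserted chain of equalities, once one writes $e_n,w_k,p_k^-$ for their evaluations at the Jucys--Murphy elements as the statement implicitly does; here $l_k$ is the elementary wheel polynomial $w_k$ of \cref{elemwheel}.

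I do not expect a real obstacle. All the substantive work---that the $\mathrm{ev}(w_k)$ span the center, and that $\C[e_n^{\pm1}][w_1,w_2,\ldots]=\C[e_n^{\pm1}][p_1^-,p_2^-,\ldots]$ inside $C$---has already been carried out in \cref{centergeneric} and \cref{powersumelemgenthesame}, and the finite-dimensionality of the semisimple center is what collapses ``subalgebra generated by the $w_k$'' to ``$\C$-span of the $w_k$'', giving the reverse inclusion for free. The only step meriting a line of explanation is that $\mathrm{ev}$ is a bona fide algebra homomorphism on $C$, so that the subalgebra identity of \cref{powersumelemgenthesame} may be transported along it and so that $\mathrm{ev}(e_n^{-1})=\mathrm{ev}(e_n)^{-1}$; this is clear since $\mathrm{ev}=\pi_n|_C$ with $\pi_n$ the algebra surjection constructed earlier, and $x_1\cdots x_n$ is a product of the units $t,s_1,\ldots,s_{n-1}$.
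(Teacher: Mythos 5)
Your argument is correct: the evaluation map $\mathrm{ev}=\pi_n|_C$ is indeed an algebra homomorphism onto $WL[x_1,\ldots,x_n]$, so transporting the identity of \cref{powersumelemgenthesame} along it gives the second equality, and the first equality reduces to the reverse inclusion $WL[x_1,\ldots,x_n]\subseteq\C[e_n^{\pm1}][w_1,w_2,\ldots]$ (with $l_k=w_k$ from \cref{elemwheel}, as you correctly read the statement). This is essentially how the paper intends the corollary, which it states without proof as an immediate consequence of \cref{centergeneric}. The one place where your route differs in substance is the reverse inclusion: you invoke the ``Further'' clause of \cref{centergeneric} (a basis of $Z(B_n(q,t))$ consisting of evaluated elementary wheel polynomials $w_{n(\lambda)}$), whereas the paper's displayed proof of \cref{centergeneric} only produces separating polynomials $p_\lambda\in C$ from \cref{separationlemma}, not specifically elementary ones, so that clause is itself only lightly justified. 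A self-contained alternative that avoids leaning on it: note that the separation in \cref{polyseparatesimples} uses only the generating function $W(\lambda,t)$, i.e.\ only the $w_k$'s, so the subalgebra $A'=\C[e_n^{\pm1}][w_1,w_2,\ldots]\subseteq C$ already separates the points $(c_\lambda(1),\ldots,c_\lambda(n))$, $\lambda\in\Lambda_n$; applying \cref{separationlemma} to $A'$ then gives $\dim_\C\mathrm{ev}(A')\geq\lvert\Lambda_n\rvert=\dim_\C Z(B_n(q,t))=\dim_\C WL[x_1,\ldots,x_n]$, and since $\mathrm{ev}(A')\subseteq WL[x_1,\ldots,x_n]$ this forces equality. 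Either way the corollary stands; your version simply buys the reverse inclusion from the stated basis refinement, while the alternative re-runs the dimension count on the smaller subalgebra and is independent of that refinement.
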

    \section{Non-generic parameters.}
    In this section we work in the case when the parameters $q,t\in \C$ are chosen not to be generic. In particular, we explicitly calculate for which of the semisimplicity parameters, the center of the BMW algebras is exactly the algebra of wheel Laurent polynomials in the JM elements. We recall a theorem of Rui and Si providing necessary and sufficient conditions for the semisimplicity of $B_n(q,t)$ over $\C$. Their result holds over any field $\Bbbk$, by calculating the Gram determinants of $B_n(q,t)$.
    \begin{thm}\cite{RSi1}\label{RuiSisemisimplicity}
 Let $B_n(q,t)$ be the Birman-Murakami-Wenzl algebra over $\C$, for any $q,t\in \C$.
    \begin{enumerate}
        \item Suppose $t\notin \{q^{-1},-q\}$.
        \begin{enumerate}
            \item If $n\geq 3$, then $B_n(q,t)$ is semisimple if and only if $o(q^2)>n$ ($o(q^2)$ is the order of $q^2$) and $t\notin \cup_{k=3}^n \{\pm q^{\mp (2k-3)},\pm q^{3-k},\pm q^{k-3}\}$.
            \item $B_2(q,t)$ is semisimple if and only if $o(q^2)>2$.
            \item $B_1(q,t)$ is always semisimple.
        \end{enumerate}
        \item Assume $t\in\{q^{-1},-q\}$.
        \begin{enumerate}
            \item $B_n(q,t)$ is not semisimple, if $n$ is either even or odd with $n\geq 7$.
            \item $B_1(q,t)$ is always semisimple.
            \item $B_3(q,t)$ is semisimple if and only if $o(q^2)>3$ and $q^4+1\neq 0$.
            \item ${B_5(q,t)}$ is semisimple if and only if $o(q^2)>5, \; q^6+1\neq 0$, and $q^8+1\neq 0$.
        \end{enumerate}
    \end{enumerate}
\end{thm}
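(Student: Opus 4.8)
The plan is to deduce the criterion from the cellular structure of $B_n(q,t)$ by computing the Gram determinants of all its cell modules. Recall (Xi \cite{Xi}, Graham--Lehrer \cite{GL}) that over $\C$ the algebra $B_n(q,t)$ is cellular with cell modules $\Delta(\lambda,f)$ indexed by $(\lambda,f)\in\Lambda_n$, and that a cellular algebra over a field is split semisimple precisely when every cell module carries a non-degenerate invariant bilinear form; that is, $B_n(q,t)$ is semisimple iff $\det G_{\lambda,f}\neq 0$ for all $(\lambda,f)\in\Lambda_n$, where $G_{\lambda,f}$ is the Gram matrix of that form in Enyang's Murphy basis \cite{EnyangMurphyBasisold}. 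So the work splits into (i) an explicit product formula for $\det G_{\lambda,f}$, and (ii) an analysis of when no factor of that product vanishes.

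For (i) I would pass to the seminormal picture. Over $\C(q,t)$ (with $q,t$ generic) the algebra is semisimple by \cref{semibywen}, the JM elements act diagonally in a seminormal basis $\{f_T:T\in T^{\operatorname{ud}}_n(\lambda)\}$ of $\Delta(\lambda,f)$ with eigenvalues the $(q,t)$-contents $c_T(i)$ (cf.\ \cref{JMactseminormal}), and since each $x_i$ is self-adjoint for the invariant form that form is orthogonal with respect to this basis; as the Murphy-to-seminormal change of basis is unitriangular, $\det G_{\lambda,f}=\prod_{T}\langle f_T,f_T\rangle$. The norms are then computed recursively down the branching graph using Enyang's explicit formulas \cite{EnyangMurphyBasisold} for the action of $s_{n-1}$ and $e_{n-1}$: at each branching step these are matrices whose entries are rational in ratios of consecutive contents $c_T(n)/c_T(n-1)$ and in $\delta=\frac{t-t^{-1}}{q-q^{-1}}+1$, and extracting norms from them produces factors of the shapes $1-c_T(n)/c_T(n-1)$, $1-c_T(n)c_T(n-1)$, or $\delta$-linear terms. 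Multiplying over all cell modules and all levels $\le n$ yields a closed formula for $\det G_{\lambda,f}$ whose irreducible factors are, up to invertible scalars, quantum integers $[m]_q$ with $m\le n$ and binomials $tq^{a}\mp 1$ (equivalently $t^2q^{2a}-1$), with $a$ forced by the possible axial distances into the ranges occurring in the statement. This is, in essence, the Gram-determinant computation behind \cite{RSi1}.

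For (ii): a factor $[m]_q$ with $2\le m\le n$ vanishes iff $q^{2m}=1$, so the $f=0$ cell modules --- the Specht modules of $B_n(q,t)/(e_1)\cong H_n(q)$ --- are non-degenerate iff $o(q^2)>n$; a mixed factor $tq^{a}=\pm 1$ vanishes iff $t=\pm q^{-a}$, and collecting the exponents $a$ that actually occur recovers precisely the excluded set $\bigcup_{k=3}^{n}\{\pm q^{\mp(2k-3)},\pm q^{3-k},\pm q^{k-3}\}$; for $n\le 2$ the only obstruction available is $[2]_q=0$, which gives (1)(b) and (1)(c). The hard part will be case (2): $t\in\{q^{-1},-q\}$ forces $\delta=0$, hence $e_i^2=0$, and whole families of the binomial factors become identically zero --- for instance the Gram matrix of $\Delta(\emptyset,n/2)$ is the zero matrix for even $n$, so $B_n(q,t)$ is non-semisimple for all even $n$; for odd $n$ one must show that once $n\ge 7$ there is always some $(\lambda,f)$ with $f\ge 1$ for which a $(q,t)$-content collision across an $f\mapsto f-1$ edge of the branching graph is unavoidable, producing a non-trivial radical in $\Delta(\lambda,f)$. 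The remaining values $n=3$ and $n=5$ must then be settled by a direct finite computation --- evaluating the finitely many $\det G_{\lambda,f}$ in $B_3(q,q^{-1})$ and $B_5(q,q^{-1})$ --- which is where the residual conditions $q^4+1\neq 0$ (at $n=3$) and $q^6+1\neq 0$, $q^8+1\neq 0$ (at $n=5$) surface. Identifying exactly which small dimensions survive, and ruling out $n=7$ and beyond, is the crux, and the reason the full statement is considerably more than bookkeeping.
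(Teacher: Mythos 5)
You should first note that the paper does not prove \cref{RuiSisemisimplicity} at all: it is quoted verbatim from \cite{RSi1}, and the paper itself records that the original proof goes "by calculating the Gram determinants of $B_n(q,t)$". So the route you propose --- cellularity in the sense of \cite{GL}, \cite{Xi}, semisimplicity iff every Gram determinant $\det G_{\lambda,f}$ is nonzero, norms computed in a seminormal basis obtained from Enyang's Murphy basis \cite{EnyangMurphyBasisold} with the JM eigenvalues of \cref{JMactseminormal} --- is exactly the strategy of the cited source, not an alternative to anything in this paper. The framework you set up is correct (the Graham--Lehrer criterion, the unitriangular Murphy-to-seminormal change of basis, the reduction of $\det G_{\lambda,f}$ to a product of diagonal norms, and the observation that $t\in\{q^{-1},-q\}$ forces $\delta=0$ so that the form on $\Delta(\emptyset,n/2)$ vanishes identically for even $n$, consistent with Xi's classification recalled later in the paper).

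However, as a proof the proposal has genuine gaps, and they sit precisely where the content of the theorem lies. In step (i) you never actually derive the closed product formula for $\det G_{\lambda,f}$: asserting that the recursive norm computation "produces factors of the shapes $1-c_T(n)/c_T(n-1)$, $1-c_T(n)c_T(n-1)$, or $\delta$-linear terms" and that the exponents are "forced by the possible axial distances into the ranges occurring in the statement" is a description of the answer, not an argument; the delicate point is showing that exactly the factors $t=\pm q^{\mp(2k-3)},\pm q^{\pm(k-3)}$ for $3\le k\le n$ occur and no others, and that every such factor really does occur (so that the condition is necessary as well as sufficient). In step (ii), case (2) is openly deferred: the non-semisimplicity for all odd $n\ge 7$, and the residual conditions $q^4+1\neq 0$ at $n=3$ and $q^6+1\neq 0$, $q^8+1\neq 0$ at $n=5$, are exactly the computations you label "the crux" and do not perform. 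So what you have is a correct plan that, if completed, would reproduce the Gram-determinant analysis of \cite{RSi1}; it cannot be accepted as a proof of the statement in its current form.
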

\par For the purposes of this paper, $q\in\C$ is not a root of unity since the next example shows that, when $q\in\C$ is a root of unity, then the center can be strictly larger than $WL[x_1,\ldots,x_n]$. 
\par We will have to distinguish between the following two cases.
    \begin{enumerate}
        \item $t=\pm q^{2a}$ for $a\in \Z$.  \label{even case}.
        \item $t=\pm q^{2a+1}$ for $a\in \Z$. \label{odd case}.
    \end{enumerate}
    \par We will distinguish further between the cases where $t$ is a high enough power of $q$ and when it is small enough but remains in the semisimplicity parameters of \cref{RuiSisemisimplicity}.
We present an example where the center is not $WL[x_1,\ldots,x_n]$, even when $B_n(q,t)$ is semisimple. That is, there are parameters for the BMW algebras for which the Jucys-Murphy elements do not contain all the information about the representation theory of $B_n(q,t)$, even in the semisimple range.
\par The main theorem of this section is
\begin{thm}\label{Centercompletetheorem}
    For $q\in \C$ generic and $t\in \C$ generic, or $t=\pm q^{2a}$, with $a\in \Z$, or $t=\pm q^{2a-1}$, for $\lvert a\rvert \geq n$ such that $B_n(q,t)$ is semisimple, it holds that
    \[
    Z(B_n(q,t))=WL[x_1,\ldots,x_n]
    \]
    as well as 
    \[
    GZ(n)=\langle x_1^{\pm 1},\ldots, x_n^{\pm 1}\rangle.
    \]
\end{thm}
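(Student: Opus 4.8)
The plan is to reduce \cref{Centercompletetheorem} to the already-established generic case \cref{centergeneric} by showing that the two crucial ingredients of that proof survive the specialization $t=\pm q^{2a}$ (the even case) and $t=\pm q^{2a-1}$ with $|a|\ge n$ (the large odd case), provided $B_n(q,t)$ is semisimple. Those ingredients are: (i) $WL[x_1,\dots,x_n]\subseteq Z(B_n(q,t))$, which we have already noted holds for \emph{all} $q,t\in\C$ via the surjection $\pi_n\colon aB_n(q,t)\twoheadrightarrow B_n(q,t)$; and (ii) a point-separation statement of the form: for $(\lambda,f),(\mu,f')\in\Lambda_n$ with $\lambda\ne\mu$ there is an elementary wheel polynomial $w_k$ whose value $W(\lambda,t)$ differs from $W(\mu,t)$. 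Once (ii) is in hand, \cref{separationlemma} together with the semisimplicity assumption (so that $\dim_\C Z(B_n(q,t))=|\Lambda_n|$ by Artin--Wedderburn and the classification of simples) forces $\dim_\C WL[x_1,\dots,x_n]\ge\dim_\C Z(B_n(q,t))$, hence equality; and the corollaries on $GZ(n)$ follow exactly as in \cref{GZgenbyJM}.

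First I would re-examine the structural facts underlying (ii) in the non-generic setting. The eigenvalues of the $x_i$ on the Murphy basis are still given by the $(q,t)$-contents $c_T(i)$ of \cref{qtcont} — Enyang's construction of the Murphy basis is over $A$ and the triangularity of the $x_i$-action persists after any specialization — so \cref{JMactseminormal}, \cref{independentofpath} and the content computation of \cref{multisetofcontents} go through verbatim, giving
\[
W(\lambda,t)=\frac{(1-t^{-1}T)^{m_{\lambda}(0)}\prod_{i\in D(\lambda)\setminus\{0\}}(1-t^{-1}q^{-2i}T)^{m_{\lambda}(i)}}{(1-tT)^{m_{\lambda}(0)}\prod_{i\in D(\lambda)\setminus\{0\}}(1-tq^{2i}T)^{m_{\lambda}(i)}}.
\]
The subtlety is that this rational function need no longer be in lowest terms: if $t=\pm q^{2a}$ then a factor $1-t^{-1}q^{-2i}T=1-(\pm1)q^{-2a-2i}T$ in the numerator can cancel a factor $1-tq^{2j}T=1-(\pm1)q^{2a+2j}T$ in the denominator precisely when $-a-i=a+j$, i.e. $i+j=-2a$ (and similarly for the odd case $t=\pm q^{2a-1}$ one checks no such coincidence is possible between numerator and denominator, since the exponents have opposite parities — this is where the "parity offset" remark is used, and it is why the large odd case behaves like generic). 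So the heart of the argument is: analyze the reduced form of $W(\lambda,t)$ after all cancellations, and show that the multiset of surviving (numerator and denominator) linear factors still determines the diagonal datum $(D(\lambda),m_\lambda)$, hence $\lambda$ by the uniqueness statement recorded after \cref{qtcont}. In the even case with $|a|\ge n$ the cancellation condition $i+j=-2a$ with $i,j\in D(\lambda)\subseteq[-(n-1),n-1]$ cannot occur, so $W(\lambda,t)$ is already reduced and the generic argument of \cref{polyseparatesimples} applies unchanged. For $|a|<n$ in the even semisimple range one must argue more carefully that the partial cancellations still leave enough information — this is the step flagged in the paper as \cref{smallevenpowers}, and I expect it to be the main obstacle: one has to show that the paired-up diagonals (those with $i+j=-2a$) are constrained enough by semisimplicity of $B_n(q,t)$ — equivalently by the bounds on $a$ forced by \cref{RuiSisemisimplicity} — that the surviving data, together with $|\lambda|=n-2f$ and the known value of $e_n=x_1\cdots x_n$ acting as a scalar, still pins down $\lambda$ and $f$ uniquely.

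The remaining steps are routine assembly: (a) verify that $WL[x_1,\dots,x_n]\subseteq\C[x_1^{\pm1},\dots,x_n^{\pm1}]^{S_n}$ is still a $\C$-subalgebra to which \cref{separationlemma} applies (immediate, since the definition of $C$ and the $w_k$ does not involve $q,t$); (b) feed the tuples $(c_\lambda(1),\dots,c_\lambda(n))$, $\lambda\in\Lambda_n$, and the separation just proved into \cref{separationlemma} to get a $|\Lambda_n|\times|\Lambda_n|$ invertible evaluation matrix; (c) conclude linear independence of the corresponding central elements exactly as in the proof of \cref{centergeneric}, using that a central element acting by zero on every simple module is zero in a semisimple algebra; (d) deduce $GZ(n)=\langle x_1^{\pm1},\dots,x_n^{\pm1}\rangle$ and the identification of Murphy and GZ bases as in \cref{GZgenbyJM} and \cref{OVlikeapproachworks}. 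The only genuinely new work is the combinatorial lemma in Step~(ii) for small even $a$; everything else is a specialization-stable repetition of the generic proof.
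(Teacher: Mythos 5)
Your overall strategy is the same as the paper's (push the generic proof of \cref{centergeneric} through the specializations by controlling the reduced form of $W(\lambda,t)$, then apply \cref{separationlemma} and Artin--Wedderburn), but there are two genuine gaps. First, your justification of the odd case $t=\pm q^{2a-1}$, $|a|\ge n$, is wrong: a numerator factor $1-t^{-1}q^{-2i}T=1\mp q^{\,1-2a-2i}T$ and a denominator factor $1-tq^{2j}T=1\mp q^{\,2a-1+2j}T$ both carry \emph{odd} powers of $q$, so there is no parity obstruction; they coincide exactly when $i+j=1-2a$, which merely requires two contents of opposite parity with that sum. If your parity argument were correct it would give separation for \emph{all} odd powers, contradicting \cref{THM1center} and the explicit failure at $n=2$, $t=q$, where $W((1,1),t)=W(\emptyset,t)=1$. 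What actually rules out cancellation for $|a|\ge n$ is the magnitude bound used in \cref{reduced expression high powers}: contents $i,j$ of opposite parity in a partition of size at most $n$ satisfy $|i+j|\le 2n-3$, so $2a-1=-i-j$ would force $|a|\le n-1$. (You do use the correct bound argument in the even case, so the fix is local, but as written the odd case is unproven.)

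Second, and more substantially, the case $t=\pm q^{2a}$ with $|a|<n$ and $B_n(q,t)$ semisimple is part of the statement, and you only flag it as ``the main obstacle'' with a sketch of what would need to be shown; no argument is given. This is precisely where the paper does its new combinatorial work (\cref{smallevenpowers}): using the semisimplicity constraint $2a\ge n-2$ from \cref{RuiSisemisimplicity} (after reducing to $a\ge 0$ by conjugating partitions), one shows that for $\lambda\vdash n-2f$ the diagonal of content $-a$ occurs at most once, that the only $(q,t)$-paired diagonals visible to $\lambda$ are $(-a,-a)$ and $(-a-i,-a+i)$ in the first column, and that each such paired diagonal has multiplicity one; hence the reduced expression of $W(\lambda,t)$ (\cref{redexpressioneven}) deletes only multiplicity-one data, and equality of reduced expressions still recovers $D(\lambda)$ and all $m_\lambda(i)$, giving $\lambda=\mu$. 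In particular the extra datum you propose to invoke (the scalar by which $e_n=x_1\cdots x_n$ acts) is not needed, and simply citing semisimplicity does not substitute for this diagonal-pairing analysis. As it stands, your proposal establishes only the generic case and the even high-power case; the odd high-power case needs the corrected bound argument, and the small even-power case is missing.
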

\begin{ex}\label{smalloddpowerexample}
    Consider $n=2$ and $t=-q^{-1}$, where $q\in\C$ is not a root of unity. A parallel situation is the case where $t$ is as above and $o(q^2)=3>2$. Then $B_2(q,t)$ is commutative and by \cref{RuiSisemisimplicity} it is also semisimple. It has $3$ one dimensional simple modules labeled by the partitions $(2),\emptyset,(1,1)$ in which the unique generator $s$ acts by the scalars $q,t^{-1}=-q, -q^{-1}$, respectively (in the case $o(q^2)=3$, we have $q^3=-1$ and thus the scalars are $q,t^{-1}=-q,-q^{-1}=-q^{-3}q^2=q^2$). Then we see that $x_2=ts_1^2$ acts by the scalars $tq^2=-q,tt^{-2}=t^{-1}=-q,tq^{-2}=-q^{-3}$ (respectively $-q,-q,-1$) and as such, the action of the Jucys Murphy elements $x_1=t,\; x_2=ts_1^2$ does not distinguish between the simple modules $L((2))$ and $L(\emptyset)$. This in turn implies that the action of the central subalgebra $WL[x_1,x_2]$ on these two modules is the same, while the two simple modules are not isomorphic. In particular, for these two modules we have $W((2))=W(\emptyset)=1$ and $e_2^{\pm 1}((2))=e_2^{\pm 1}(\emptyset)=1$, where $W(\lambda,t)$ was defined in \cref{X} and $e_2^{\pm 1}=(x_1x_2)^{\pm 1}$ is the second elementary symmetric polynomial evaluated at the Jucys Murphy elements, acting on the simple modules $L((2))$ and $L(\emptyset)$. In particular, the proof of \cref{polyseparatesimples} breaks down. In this case, $\dim_\C WL[x_1,x_2]=2<3=\dim_\C B_2(q,t)=\dim_\C Z(B_2(q,t))$.
    \par One can calculate that in this case, $e\notin WL[x_1,x_2]$ and that this is one of the possible "missing" central element distinguishing between simples. The reason is that the simples $L((2))$ and $L((1,1))$ are the pullbacks of the simple modules of $H_2(q)$, hence $e$ acts on them as zero, while we easily see that it acts on $L((\emptyset))$ by the non-zero scalar $2$.
\end{ex}
\subsection{The high powers case}
\par In this section, we deal with the case of high enough powers of $q$.
Let $t=\pm q^{2a}$ or $t=\pm q^{2a-1}$ be an even power of $q$, outside of the non-semisimplicity parameters described in \cref{RuiSisemisimplicity}. The algebras $B_n(q,t)$ are then semisimple and their simple modules are still labeled by $\Lambda_n$ and in particular, \cref{JMactseminormal} holds. We want to show that the proof of \cref{centergeneric} , is true, with no modifications. For a partition $\lambda\vdash n-2f$, consider the rational function $W(\lambda,t)$. We show that the expression given in \cref{reducedexpressiongeneric} is still a reduced expression for $W(\lambda,t)$.
\begin{lem}\label{reduced expression high powers}
For $t=\pm q^{2a}$, or $t=\pm q^{2a-1}$ where $\lvert a\rvert \geq n$ and $q\in \C$ not a root of unity, it holds that
    \begin{equation*}
        W(\lambda,t)=\frac{\prod_{i\in D(\lambda)} (1-t^{-1}q^{-2i}T)^{m_{\lambda}(i)}}{\prod_{i\in{D(\lambda)}} (1-tq^{2i}T)^{m_{\lambda}(i)}},
    \end{equation*}
    where the expression of the rational function of the right hand side is reduced.
\end{lem}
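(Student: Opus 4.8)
The plan is to mimic the argument of \cref{reducedexpressiongeneric}, where the only non-formal input was that distinct factors $1-tq^{2i}T$ (for $i\in D(\lambda)$) are pairwise non-associate, and that no numerator factor $1-t^{-1}q^{-2i}T$ coincides with a denominator factor $1-tq^{2j}T$. Both of these amount to arithmetic statements about the exponents of $q$ occurring among the contents, so the whole proof reduces to bounding those exponents and invoking the hypothesis $\lvert a\rvert\geq n$. First I would record, exactly as in the generic case, the identity
\begin{equation*}
    W(\lambda,t)=\frac{(1-t^{-1}T)^{m_{\lambda}(0)}\prod_{i\in D(\lambda)\setminus\{0\}}(1-t^{-1}q^{-2i}T)^{m_{\lambda}(i)}}{(1-tT)^{m_{\lambda}(0)}\prod_{i\in D(\lambda)\setminus\{0\}}(1-tq^{2i}T)^{m_{\lambda}(i)}}
\end{equation*}
after the cancellation of the $(1-t^{\pm1}T)^{f}$ factors coming from \cref{multisetofcontents}; this step uses nothing about $t$. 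Rewriting uniformly over $i\in D(\lambda)$ gives the claimed formula, so the only remaining task is to check that the right-hand side is already in lowest terms.

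For the reducedness, I would substitute $t=\pm q^{2a}$ (the case $t=\pm q^{2a-1}$ is identical with $2a$ replaced by $2a-1$). Then every denominator factor is $1-(\pm1)q^{2a+2i}T$ and every numerator factor is $1-(\pm1)q^{-2a-2i}T$, with $i$ ranging over $D(\lambda)=\{-h(\lambda),\dots,w(\lambda)\}$. Since $\lambda\vdash n-2f$ we have $h(\lambda),w(\lambda)\leq n-2f-1\leq n-1$, so every content $i$ appearing satisfies $\lvert i\rvert\leq n-1$. Two denominator factors indexed by $i\neq j$ are associate only if $q^{2a+2i}=q^{2a+2j}$, i.e. $q^{2(i-j)}=1$; as $q$ is not a root of unity and $i\neq j$, this is impossible — so distinct denominator (and likewise numerator) factors are pairwise non-associate. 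A numerator factor at index $i$ and a denominator factor at index $j$ are associate only if $q^{-2a-2i}=q^{2a+2j}$, i.e. $q^{4a+2i+2j}=1$, forcing $2a+i+j=0$; but $\lvert i+j\rvert\leq 2(n-1)<2n\leq 2\lvert a\rvert$, so $\lvert 2a+i+j\rvert\geq 2\lvert a\rvert-2(n-1)>0$, a contradiction. (One also has to handle the sign when $t=-q^{2a}$: then numerator and denominator factors carry the same sign $\pm1$ simultaneously, so the sign never obstructs being non-associate and never creates a spurious cancellation.) Hence no cancellation is possible and the expression is reduced.

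The only subtlety — and the one place where the hypothesis is genuinely used rather than $q$ not being a root of unity — is the numerator-versus-denominator comparison, where I need $2a+i+j\neq 0$ for all $i,j\in D(\lambda)$; this is exactly where $\lvert a\rvert\geq n$ enters, via $\lvert i+j\rvert\leq 2(n-1)$. I expect this to be the main (indeed only) obstacle, and it is mild: the bound $\lvert a\rvert\geq n$ is slightly more than strictly necessary (one only needs $\lvert a\rvert>n-1$ in the even case), which is harmless. With reducedness established, the downstream arguments — \cref{polyseparatesimples}, the separation \cref{separationlemma}, and the dimension count — all transfer verbatim, since they only used that $B_n(q,t)$ is semisimple with simples indexed by $\Lambda_n$, that \cref{JMactseminormal} holds, and that $W(\lambda,t)$ has the reduced form above; these hypotheses are all in force for $t=\pm q^{2a}$ or $t=\pm q^{2a-1}$ with $\lvert a\rvert\geq n$ outside the non-semisimplicity locus of \cref{RuiSisemisimplicity}.
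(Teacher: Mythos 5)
Your proposal is correct and follows essentially the same route as the paper: the formula itself carries over from the generic case since \cref{multisetofcontents} is parameter-independent, and reducedness is checked exactly as in the paper by ruling out $q^{2(i-j)}=1$ and reducing the numerator-versus-denominator comparison to $2a+i+j=0$ (resp.\ $2a-1+i+j=0$), which is excluded by the content bound $\lvert i+j\rvert\leq 2n-2$ together with $\lvert a\rvert\geq n$. The only difference is cosmetic: the paper uses the sharper bound $\lvert i+j\rvert\leq 2n-3$ in the odd case via the parity observation, while your coarser bound $2n-2$ suffices equally well.
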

\begin{proof}
    We only have to show that all factors in the numerator are different, and that they do not coincide with any of the factors in the denominator. For the first, suppose there exist $i,j\in D(\lambda)=\{-h(\lambda)+1,\ldots,w(\lambda)-1\}$ such that 
    \begin{equation*}
            tq^{2i}=tq^{2j}.
    \end{equation*}    
    Since $t$ is non-zero, this would imply $q^{2(i-j)}=1$. Since $q$ is not a root of unity, we have $i=j$. Hence, both the numerator and denominator of $W(\lambda,t)$ are reduced. We are left to prove that no factors in the numerator coincide with any factors in the denominator. Assume that $t=\pm q^{2a}$ and there exist  $i,j\in D(\lambda)$ such that 
    \begin{equation*}
            tq^{2i}=t^{-1}q^{-2j}.
    \end{equation*}
    Equivalently, since $q$ is not a root of unity we get $2a=-i-j$. For any two contents $i,j$ of a partition $\lambda \vdash n-2f$ we have that the maximum of $-i-j$ is equal to $2n-2$, for the partition $\lambda=(1^n)\vdash n$, so that $2a \leq 2n-2$ or $a\leq n-1$, a contradiction to the assumption $a\geq n$. One deals with the case $a\leq -n$ similarly, by taking the minimum of $-i-j$ for two contents to be equal to $2-2n$, so that $a\geq 1-n$ contradicting $a\leq -n$. To summarize, if $\lvert a\rvert \geq n$, then the expression of $W(\lambda,t)$ is reduced. Now assume that $t=\pm q^{2a-1}$. Then $2a-1=-i-j$. This implies that $i,j$ have different parities. Again, by examining the maximum and minimum of $-i-j$, we see that $2a-1=-i-j\leq n-1 + n-2=2n-3$ or, equivalently $a\leq n-1$ a contradiction, or $2a-1=-i-j\geq -(2n-3)$ or, equivalently $a\geq 2-n$, a contradiction to $a\leq -n$. This proves the claim.
\end{proof}
\begin{rem}
    For odd powers $t=\pm q^{2a-1}$, the above proof also goes through unchanged for $a=1-n$, but in order to make the statement symmetric, we chose the restriction $\lvert a\rvert \geq n$.
\end{rem}
By \cref{reduced expression high powers}, the proofs of \cref{polyseparatesimples},\cref{separationlemma} and \cref{centergeneric} go through with no modifications. Thus, we can state
\begin{thm}\label{centerhighpowersnonrootofunity}
    If $q\in\C$ is not a root of unity and $t=\pm q^{2a}$ or $t=\pm q^{2a-1}$ for $\lvert a\rvert\geq n$, then
    \begin{equation*}
        Z(B_n(q,t))=WL[x_1,\ldots,x_n].
    \end{equation*}
\end{thm}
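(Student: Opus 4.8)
The plan is to run the argument of \cref{centergeneric} with essentially no modifications, the only delicate input being that the reduced-expression lemma \cref{reduced expression high powers} continues to apply. First I would record that the inclusion $WL[x_1,\ldots,x_n]\subseteq Z(B_n(q,t))$ holds for arbitrary $q,t\in\C$, since it is obtained by pushing the center $C$ of $aB_n(q,t)$ (computed in \cref{cenafbmw}) through the surjection $\pi_n$ of \Cref{WL}. Next I would verify that $B_n(q,t)$ is semisimple in the stated range: by \cref{RuiSisemisimplicity} the finitely many excluded specializations of $t$ are powers $\pm q^{e}$ with $|e|\leq 2n-3$, whereas $t=\pm q^{2a}$ or $t=\pm q^{2a-1}$ with $|a|\geq n$ forces $|e|\geq 2n-1$; combined with $o(q^2)=\infty>n$ (as $q$ is not a root of unity) this places $B_n(q,t)$ in the semisimple locus, the cases $n\leq 2$ being immediate. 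Hence by Artin--Wedderburn $\dim_\C Z(B_n(q,t))=|\Lambda_n|$, and it suffices to exhibit $|\Lambda_n|$ linearly independent elements of $WL[x_1,\ldots,x_n]$.

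To produce them I would reuse the point-separation machinery exactly as in the proof of \cref{centergeneric}. Since $B_n(q,t)$ is semisimple with simples indexed by $\Lambda_n$, \cref{JMactseminormal} furnishes a Murphy basis on which the Jucys--Murphy elements act by $(q,t)$-contents; by \cref{independentofpath} a central Laurent polynomial in the $x_i$ acts on $L(\lambda)$ by a scalar $p(\lambda)$ depending only on $\lambda$. Choosing the drunk path $T^{\operatorname{d}}(\lambda)$ and invoking \cref{multisetofcontents}, the action of the elementary wheel polynomials $w_k$ on $L(\lambda)$ is encoded by the coefficients of the generating function $W(\lambda,t)$, which by \cref{reduced expression high powers} equals the reduced rational function with numerator $\prod_{i\in D(\lambda)}(1-t^{-1}q^{-2i}T)^{m_{\lambda}(i)}$ and denominator $\prod_{i\in D(\lambda)}(1-tq^{2i}T)^{m_{\lambda}(i)}$. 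One then mimics \cref{polyseparatesimples}: if every $p\in WL[x_1,\ldots,x_n]$ satisfies $p(\lambda)=p(\mu)$, then in particular $W(\lambda,t)=W(\mu,t)$; since both sides are already reduced, equating numerators and denominators and comparing which irreducible factors $1-tq^{2i}T$ appear and with what multiplicity forces $D(\lambda)=D(\mu)$ and $m_{\lambda}=m_{\mu}$, so the diagonal datum is the same and hence $\lambda=\mu$ (and $f=f'$).

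Finally I would feed this separation property into \cref{separationlemma} with $A=C$ and the tuples $(c_{\lambda}(1),\ldots,c_{\lambda}(n))$ ranging over $\lambda\in\Lambda_n$, obtaining a family $\{p_{\lambda}:\lambda\in\Lambda_n\}\subseteq WL[x_1,\ldots,x_n]$ whose matrix of eigenvalues $(p_{\lambda}(\mu))_{\lambda,\mu}$ is invertible; linear independence of the $p_{\lambda}$ inside $B_n(q,t)$ then gives $\dim_\C WL[x_1,\ldots,x_n]\geq|\Lambda_n|=\dim_\C Z(B_n(q,t))$, which with the reverse inclusion yields the claimed equality. The only genuinely new content over the generic case is \cref{reduced expression high powers} itself, i.e.\ ruling out cancellation between the numerator and denominator of $W(\lambda,t)$; this is precisely the step that uses $|a|\geq n$, via the observation that $-i-j$ for contents $i,j$ of partitions of $n-2f$ lies in the interval $[2-2n,\,2n-2]$, so that $2a$ (resp.\ $2a-1$) cannot equal $-i-j$. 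Once this is in hand, every subsequent step is formally identical to the generic argument, so I expect the verification of the reduced form to be the main (and essentially the only) obstacle.
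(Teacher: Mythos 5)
Your proposal is correct and follows essentially the same route as the paper: reduce to the generic argument of \cref{centergeneric}, with the only new ingredient being that the expression of $W(\lambda,t)$ in \cref{reducedexpressiongeneric} remains reduced when $\lvert a\rvert\geq n$, proved exactly via the bound $-i-j\in[2-2n,2n-2]$ for contents of partitions of size at most $n$. Your explicit check that $\lvert a\rvert\geq n$ places $t$ outside the non-semisimplicity locus of \cref{RuiSisemisimplicity} is a welcome (if routine) addition that the paper leaves implicit.
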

\begin{cor}
    For $q,t\in\C$ as in \cref{centerhighpowersnonrootofunity}, it holds that 
    \begin{equation*}
        GZ(n)=\langle x_1^{\pm 1},\ldots,x_n^{\pm 1}\rangle.
    \end{equation*}
\end{cor}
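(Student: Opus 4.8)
The plan is to follow the proof of \cref{GZgenbyJM} essentially verbatim, the only additional work being to check that each ingredient it relies on survives in the present range of parameters.

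First I would observe that the hypothesis of \cref{centerhighpowersnonrootofunity} propagates down the tower: $\lvert a\rvert\geq n$ forces $\lvert a\rvert\geq i$ for every $i\leq n$, and $B_i(q,t)$ is semisimple for all such $i$. The latter is a monotonicity remark based on \cref{RuiSisemisimplicity}: for $n\geq 2$ the bound $\lvert a\rvert\geq n$ already puts $t$ outside $\{q^{-1},-q\}$, and both of the remaining requirements, $o(q^2)>i$ and $t\notin\bigcup_{k=3}^{i}\{\pm q^{\mp(2k-3)},\pm q^{3-k},\pm q^{k-3}\}$, are weaker than their counterparts at level $n$ (the case $n=1$ being trivial). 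Hence \cref{centerhighpowersnonrootofunity} applies at every level $i\leq n$, giving $Z(B_i(q,t))=WL[x_1,\ldots,x_i]$.

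Next I would dispose of the inclusion $\langle x_1^{\pm 1},\ldots,x_n^{\pm 1}\rangle\subseteq GZ(n)$, which is essentially free. The relation $x_i\in C(B_i(q,t),B_{i-1}(q,t))$ is a consequence of the defining relations over $A$ and therefore persists after any specialisation, while the inclusion $C(B_i(q,t),B_{i-1}(q,t))\subseteq GZ(i)$ was proved using only the multiplicity-free branching rule \cref{branching} and Schur's lemma, both available in the semisimple range under consideration; since $GZ(i)\subseteq GZ(n)$, this inclusion then follows exactly as in the generic case.

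For the reverse inclusion, every element of $GZ(n)=\langle Z(1),\ldots,Z(n)\rangle$ is a sum of products $f_1\cdots f_n$ with $f_i\in Z(B_i(q,t))$; by the first step $f_i\in WL[x_1,\ldots,x_i]\subseteq\langle x_1^{\pm 1},\ldots,x_n^{\pm 1}\rangle$, so $GZ(n)\subseteq\langle x_1^{\pm 1},\ldots,x_n^{\pm 1}\rangle$, and combining the two inclusions yields the asserted equality; maximality of $\langle x_1^{\pm 1},\ldots,x_n^{\pm 1}\rangle$ among commutative subalgebras is then automatic from multiplicity-freeness of the tower, as noted after \cref{GZalgebra}, and one also recovers that the Murphy basis coincides up to scalars with the Gelfand--Zeitlin basis, as in \cref{OVlikeapproachworks}. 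I do not anticipate a real obstacle; the one delicate point is the semisimplicity bookkeeping in the first step, since the structural input --- the cellular/Murphy bases, the diagonal action \cref{JMactseminormal} of the Jucys--Murphy elements, and the branching graph --- was originally phrased in the generic setting and must be re-read for $t=\pm q^{2a}$ or $\pm q^{2a-1}$ with $\lvert a\rvert\geq n$, which is precisely what the discussion leading up to \cref{centerhighpowersnonrootofunity} arranges.
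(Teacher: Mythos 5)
Your proposal is correct and follows essentially the same route as the paper: the paper's (implicit) argument is the one-line proof of \cref{GZgenbyJM} applied with \cref{centerhighpowersnonrootofunity} at each level $i\leq n$, namely that $GZ(n)$ is generated by the centers $Z(i)=WL[x_1,\ldots,x_i]$, which are Laurent polynomials in the Jucys--Murphy elements, while the reverse inclusion comes from $x_i\in C(B_i(q,t),B_{i-1}(q,t))\subseteq GZ(i)$. Your extra bookkeeping --- that $\lvert a\rvert\geq n$ forces the hypotheses (semisimplicity and the reduced-expression lemma) at every level of the tower --- is exactly the point the paper leaves tacit, and is verified correctly.
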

\begin{ex}
Let $n=2$ and $t=1=q^0$. Then $B_2(q,1)$ is still semisimple with $3$ one dimensional simple modules. We still show that even if the expressions $W(\lambda,t)$ are not reduced, they still completely determine partitions. In this case, $t=t^{-1}$, so that
\begin{align*}
          &  W(\emptyset)=1, \quad
           W((2))=\frac{1-q^{-2}T}{1-q^2T},  \quad
            W((1,1))=\frac{1-q^2T}{1-q^{-2}T}.
\end{align*}
\end{ex}

\begin{ex}\label{examplewithpairingsfirst}
To show that there is nothing special to the above example, pick $n=4$ and $t=q^2$. By \cref{RuiSisemisimplicity} we have $B_4(q,q^2)$ is semisimple. Note that in this case, in order to simplify $W(\lambda,t)$ for $\lambda\vdash4,2,0$, we need to find pairs of $(q,t)$-contents such that they multiply to $1$, equivalently $q^2q^{2i}=q^{-2}q^{-2j}$. These pairings for $i,j\in\{-3,\ldots,3\}$ occur for $i=0,j=-2$, as well as for $i=j=-1, \quad i=-3, \; j=1, \quad i=1, \;j=-3$. In particular, we notice that in order for a partition to see a pairing, it needs to have a box in the $-1$ diagonal. If it does not, then no pairings occur, hence the expression of $W(\lambda,t)$ remains reduced. Now for the partitions that see the diagonal $-1$, they can have at most one box in the $-1$ diagonal, this box is paired with itself and hence the terms $\frac{1-t^{-1}q^{2}T}{1-tq^{-2}T}$ are deleted from $W(\lambda,t)$. It is also clear that the pair $(-3,1)$ will never be seen by a partition of size less than or equal to $4$. 
\par To summarize, $W(\lambda,t)$ remains reduced for partitions $\lambda$ not having a box in the $-1$ diagonal, and for boxes that see the $-1$ diagonal, we have $m_{\lambda}(-1)=1$ and need to delete the term corresponding to $tq^{-2}$ in $W(\lambda,t)$, as well as the pairs $(-2,0)$ or $(-3,-1)$ if they are seen by $\lambda$. To make this a combinatorial calculation with Young diagrams, we color diagonals whose contents multiply to 1 with the same color and then delete them from the expression. The shape that is left with no possible pairings yields the reduced expression of $W(\lambda,t)$. Below we give the reduced forms of $W(\lambda,t)$ for every $\lambda\vdash4,2,0$ and illustrate how to get the reduced form from the Young diagrammatic calculation for $\lambda=(1^4)$.
        \begin{align*}
          &  W((\emptyset))=1 \\
          & W((2))=\frac{(1-q^{-2}T)(1-q^{-4}T)}{(1-q^2T)(1-q^4T)}, \quad W((1,1))=\frac{(1-q^{-2}T)}{(1-q^2T)}, \\
          & W((4)) = \frac{\prod_{i=0}^3 (1-q^{-2(i+1)}T)}{\prod_{i=0}^3(1-q^{2(i+1)}T)}, \quad W((1^4)) =\frac{(1-q^{4}T)}{(1-q^{-4}T)}, \\
          & W((2,1,1))=\frac{(1-q^{-4}T)}{(1-q^4T)}, \quad W((2,2))=\frac{(1-q^{-2}T)^2(1-q^{-4}T)}{(1-q^{2}T)^2(1-q{4}T)},  \\
          & W((3,1))=\frac{(1-q^{-2}T)(1-q^{-4}T)(1-q^{-6}T)}{(1-q^{2}T)(1-q^{4}T)(1-q^{6}T)},
        \end{align*}
        with the pictorial calculation for $W((1^4))$
        \ytableausetup{boxsize=1.3em}
        \[\vcenter{\hbox{\scalebox{0.7}{
        \begin{ytableau}
            *(blue) 0 \\
            *(red) -1 \\
            *(blue) -2 \\
            -3
        \end{ytableau}}}}
        =
        \vcenter{\hbox{\scalebox{0.7}{\begin{ytableau}
            *(blue) 0 \\
              \\
            *(blue) -2 \\
            -3
        \end{ytableau}}}}
        =
         \vcenter{\hbox{\scalebox{0.7}{\begin{ytableau}
            \\
            \\
            \\
            -3
        \end{ytableau}}}}
        \]
        which means that the only term surviving in $W((1^4))$ is the one corresponding to $tq^{-6}$, with multiplicity $m_{(1^4)}(-3)=1$. It is clear that these expressions separate simple modules, hence
         \[
        Z(B_4(q,q^2))=WL[x_1,x_2,x_3,x_4].
        \]
    \end{ex}
    \par To complete the $B_4(q,t)$ case, we encourage the reader to use the procedure described in \cref{examplewithpairingsfirst} to show that
        \[
        Z(B_4(q,q^4))=WL[x_1,x_2,x_3,x_4].
        \]
    \subsection{Small even powers of $q$}\label{smallevenpowers}
    In this subsection, we deal with the case $t=\pm q^{2a}$, where $a\in \{1-n,\ldots,n-1\}$. Notice that this is the set of all possible contents of partitions $\lambda \vdash n-2f$, for $f=0,\ldots,[\frac{n}{2}]$.
    \par Towards that, we will actually solve the case $a\in\{0,\ldots,n-1\}$, as the proofs for negative powers are just reflections of the combinatorics we use in this section. That is, one can replace $\lambda$ by its conjugate partition $\lambda^t$ to get the results for negative powers. Hence, assume $0\leq a\leq n$ and $n\geq 3$, since we dealt with the case $n=2$ in the previous section. In this case, in order for $B_n(q,q^{2a})$ to be semisimple, we need $2a>n-3$ or, equivalently, $2a\geq n-2$. We thus need to deal with the cases $a\in\{[\frac{n}{2}]-1,\ldots,n-1\}$.
    \begin{defi}
        For a partition $\lambda\vdash n-2f$ and $i,j\in D(\lambda)$, we say that the diagonals $i,j$ are $(q,t)$-paired, if $c_{\lambda}(i)c_{\lambda}(j)=1$. Equivalently, this means $tq^{2i}=(tq^{2j})^{-1}$.
    \end{defi}
   \par  For any $f=0,\ldots,[\frac{n}{2}]$ and $\lambda\vdash n-2f$, its Young diagram fits inside the Young diagram of the partition $(n,[\frac{n}{2}],[\frac{n}{3}],\ldots,[\frac{n}{n}])$ shown below.
    \[\scalebox{0.7}{\ytableausetup{boxsize=2em}
    \begin{ytableau}
        0 & 1 & \ldots & \ldots & \ldots & \scriptstyle{n-1} \\
        -1 & \ldots & \ldots & \scriptscriptstyle{[\frac{n}{2}]-1} \\
        \vdots & \ldots \\
        -a & ? \\
        \scriptscriptstyle{-a-1} \\
        \vdots \\
       \scriptscriptstyle{1-n}
    \end{ytableau}}\]
    Since $a\geq[\frac{n}{2}]-1$, we have that the $-a$ diagonal will be met at the $[\frac{n}{[\frac{n}{2}]-1}]=2$ part of the above diagram, or lower. The question mark indicates that if $a=[\frac{n}{2}]-1$, then there is exactly one box that can fit in a partition $\lambda\vdash n-2f$ next to it, but we only have $1$ box in every diagonal after it. 
    \par In particular, we notice the following. The box with content $-a$ will appear at most once in every partition $\lambda\vdash n-2f$. Moreover, for a chosen $a$, we see that the diagonals that are $(q,t)$-paired are $(-a,-a)$, and all pairs of the form $(-a-i,-a+i)$, for $i=1,\ldots,n-a-1$. Note that even though this big partition sees pairs occurring outside the first column, a partition $\lambda\vdash n-2f$ will only see pairs in the first column. That is because if a partition is able to see the $-a$ diagonal, then in order for it to see the pair $(-2a-1,1)$, it would need to be a partition of at least $2a+3\geq n+1$, which is absurd.
 \par   Hence, for a partition $\lambda\vdash n-2f$, if $-a\notin D(\lambda)$, then there are no pairs in $\lambda$. For $-a\in D(\lambda)$, the pairs seen by $\lambda$ are $(-a-i,-a+i)$, for $i=1,\ldots,\min\{h(\lambda)-a-1,a\}$. Moreover, notice that if $\lambda$ can see the pair $(-a-i,-a+i)$, then $m_{\lambda}(-a-i)=1=m_{\lambda}(-a+i)$. That is, because if you were able to see $-a-i$ more than once, that would mean you are a partition of at least $2(a+i+1)+2=2a+4+2i \geq n+2(i+1)$, respectively if you are able to see both $-a+i$ and $-a-i$, that means you are a partition of at least $2(a-i+1)+2+(a+i)-(a-i)=2a+4\geq n+1$. 
 \begin{defi}
     For $\lambda\vdash n-2f$, we denote by $P(\lambda)$ to be the set containing all $i\in D(\lambda)$ that are $(q,t)$-paired.
 \end{defi}
 \begin{ex}\label{examplewithqtpairs}
     For $n=10, \; a=4$ and $\lambda=(2,1^8)$, we have 
     \[P(\lambda)=\{-4,-5,-3,-6,-2,-7,-1\}.\]
     To illustrate pairings, we draw the Young diagram of the partition, coloring the paired diagonals with the same color. Uncolored diagonals do not have a $(q,t)$-pair and if there is a single occurence of a color, this means the diagonal is $(q,t)$-paired with itself.
     
     \[
     \ytableausetup{boxsize=1.3em}\scalebox{0.7}{
     \begin{ytableau}
         0 & 1 \\
        *(yellow) -1 \\
        *(green) -2 \\
        *(blue) -3 \\
        *(red) -4 \\
        *(blue) -5 \\
        *(green) -6 \\
        *(yellow) -7
     \end{ytableau}}
     \]
 \end{ex}
 It follows from the discussion above that
 \begin{lem}\label{redexpressioneven}
     For $a\geq n-2$ and $\lambda \vdash n-2f$, the reduced expression of $W(\lambda,t)$ is
     \begin{equation*}
         W(\lambda,t)=\prod_{i\notin P(\lambda)} \frac{(1-t^{-1}q^{-2i}T)^{m_{\lambda}(i)}}{(1-tq^{2i}T)^{m_{\lambda}(i)}}.
     \end{equation*}
 \end{lem}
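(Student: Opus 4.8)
The plan is to obtain the reduced form by cancellation from a non-reduced expression that is valid for every specialization of $q,t$. First I would note that the computation in the proof of \cref{reducedexpressiongeneric}, up to the point where genericity is invoked, is purely formal: starting from the multiset of $(q,t)$-contents of the drunk path in \cref{multisetofcontents} and canceling the $(1-tT)^{f}$ and $(1-t^{-1}T)^{f}$ coming from the wiggle, one gets
\begin{equation*}
W(\lambda,t)=\frac{\prod_{i\in D(\lambda)}(1-t^{-1}q^{-2i}T)^{m_{\lambda}(i)}}{\prod_{i\in D(\lambda)}(1-tq^{2i}T)^{m_{\lambda}(i)}}
\end{equation*}
for all $q,t$ with $q$ not a root of unity. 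So everything reduces to determining which factors cancel once $t=\pm q^{2a}$, and then to checking that the leftover is reduced.

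Next I would pin down the cancellation pattern. Since $q$ is not a root of unity, the factors in the numerator (resp.\ denominator) indexed by distinct $i\in D(\lambda)$ are pairwise distinct, so a numerator factor $(1-t^{-1}q^{-2i}T)$ can only cancel against a denominator factor $(1-tq^{2j}T)$, and this happens iff $t^{2}q^{2i+2j}=1$, i.e.\ (using $t^{2}=q^{4a}$ and $q$ not a root of unity) iff $i+j=-2a$, i.e.\ iff $i$ and $j$ are $(q,t)$-paired. Since $D(\lambda)=\{-h(\lambda),\dots,w(\lambda)\}$ is an interval of integers, the involution $\sigma\colon i\mapsto -2a-i$ restricts to an involution of $P(\lambda)=D(\lambda)\cap\sigma(D(\lambda))$, and for $i\in D(\lambda)$ one has $i\notin P(\lambda)$ exactly when $\sigma(i)\notin D(\lambda)$. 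The one nonformal ingredient, which is where the standing hypothesis $2a\ge n-2$ enters, is that every diagonal in $P(\lambda)$ has length exactly one; granting this, $m_{\lambda}(i)=m_{\lambda}(\sigma(i))=1$ for all $i\in P(\lambda)$.

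Given these facts the identity falls out mechanically. Split both products over $D(\lambda)=P(\lambda)\sqcup(D(\lambda)\setminus P(\lambda))$ and reindex the $P(\lambda)$-part of the denominator by $\sigma$; since $tq^{2\sigma(j)}=t^{-1}q^{-2j}$ (again because $t^{2}=q^{4a}$) and $m_{\lambda}(\sigma(j))=m_{\lambda}(j)$, the two $P(\lambda)$-parts agree and cancel entirely, leaving $\prod_{i\in D(\lambda)\setminus P(\lambda)}\frac{(1-t^{-1}q^{-2i}T)^{m_{\lambda}(i)}}{(1-tq^{2i}T)^{m_{\lambda}(i)}}$. To see this is reduced: a leftover numerator factor for $i$ could cancel a leftover denominator factor for $j$ only if $j=\sigma(i)$, but then $\sigma(i)\in D(\lambda)$ would put $i$ into $P(\lambda)$, a contradiction; and inside each of the two remaining products the factors are distinct because $q$ is not a root of unity.

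The main obstacle is the length-one claim for paired diagonals: one must rule out that a diagonal of content $-a$ or $-a\pm i'$ with $i'\ge 1$ in $\lambda$ has length $\ge 2$, which follows from a short box count forcing $|\lambda|\ge 2a+3>n$ (or more) once $2a\ge n-2$ — and this bound is exactly the semisimplicity condition for $t=\pm q^{2a}$, so it is sharp: in the non-semisimple range longer paired diagonals appear and $\sigma$ no longer preserves multiplicities. Everything else in the argument is formal bookkeeping.
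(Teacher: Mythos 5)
Your argument is correct and takes essentially the same route as the paper: the lemma is stated there as an immediate consequence of the preceding discussion, which supplies exactly your two ingredients — that with $2a\ge n-2$ any $(q,t)$-pair in $\lambda$ is of the form $(-a-i',-a+i')$ and each paired diagonal has length one (the $\lvert\lambda\rvert\ge 2a+3>n$ box count) — after which the paired factors cancel from the unreduced expression of \cref{reducedexpressiongeneric} and the leftover is reduced since $q$ is not a root of unity. Your explicit involution $\sigma(i)=-2a-i$ and the cancellation/reducedness bookkeeping are just a tidier write-up of the same proof.
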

 \begin{ex}
     For \cref{examplewithqtpairs}, by deleting all colored pairs in the Young diagram, we get
     \[
     W((2,1^8))=\frac{(1-t^{-1}T)(1-t^{-1}q^{-2}T)}{(1-tT)(1-tq^2T)}.
     \]
 \end{ex}
 We are now in a position to prove that
 \begin{lem}
     For $\lambda,\mu\in \Lambda_n$, it holds that
     \[
     W(\lambda,t)=W(\mu,t) \iff \lambda=\mu.
     \]
 \end{lem}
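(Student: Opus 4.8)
\emph{Proof proposal.} The implication $\lambda=\mu\Rightarrow W(\lambda,t)=W(\mu,t)$ is trivial, so the plan is to prove the converse by recovering $\lambda$ from $W(\lambda,t)$. First I would invoke the reduced form of \cref{redexpressioneven},
\[
W(\lambda,t)=\prod_{i\in D(\lambda)\setminus P(\lambda)}\frac{(1-t^{-1}q^{-2i}T)^{m_\lambda(i)}}{(1-tq^{2i}T)^{m_\lambda(i)}},
\]
together with the analogous identity for $\mu$. Since $q$ is not a root of unity, all the linear factors appearing in this reduced expression are pairwise non-associate in $\C[T]$, so $W(\lambda,t)=W(\mu,t)$ forces, by unique factorisation,
\[
S:=D(\lambda)\setminus P(\lambda)=D(\mu)\setminus P(\mu),\qquad m_\lambda|_S=m_\mu|_S=:m .
\]
From the discussion preceding \cref{redexpressioneven} I would record: $m_\lambda\equiv 1$ on $P(\lambda)$; $P(\lambda)$ is non-empty exactly when $-a\in D(\lambda)$, equivalently $h(\lambda)\geq a$, and in that case it is the integer interval centred at $-a$ of radius $\min\{h(\lambda)-a-1,\,a\}$ (interpreted as $\{-a\}$ when that quantity is non-positive); and consequently the corner diagonal $-h(\lambda)$ lies in $P(\lambda)$ only when $h(\lambda)=a$. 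In particular $P(\lambda)$ depends only on $h(\lambda)$ and $a$. Since a partition is determined by its diagonal datum $(D(\cdot),m_{(\cdot)})$, it remains only to recover $D(\lambda)$ --- equivalently the set $P(\lambda)=D(\lambda)\setminus S$ --- from the pair $(S,m)$.

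The linchpin of the reconstruction is a parity observation. Since $m_\lambda\equiv 1$ on $P(\lambda)$ we have $|\lambda|=\sum_{i\in S}m(i)+|P(\lambda)|$, and $|P(\lambda)|$ is either $0$ or odd (it equals $2r+1$, where $r$ is the radius above). As $\lambda,\mu\in\Lambda_n$ we have $|\lambda|\equiv|\mu|\equiv n\pmod 2$, hence $|P(\lambda)|\equiv|P(\mu)|\pmod 2$, which is possible only if $P(\lambda)=\emptyset\iff P(\mu)=\emptyset$. I would emphasise that the hypothesis $\lambda,\mu\in\Lambda_n$ is genuinely needed here: without it, $(S,m)$ alone does not determine $\lambda$, because appending a single box to the first column of a partition with $h(\lambda)=a-1$ leaves $S$ and $m|_S$ unchanged.

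It then remains to treat two cases. If $P(\lambda)=P(\mu)=\emptyset$, then $S=D(\lambda)=D(\mu)$ and $m_\lambda=m_\mu$ everywhere, so $\lambda=\mu$. If $P(\lambda),P(\mu)$ are both non-empty, then $h(\lambda),h(\mu)\geq a$, and since $P(\cdot)$ is a function of $h(\cdot)$ and $a$ it suffices to show $h(\lambda)=h(\mu)$: once this holds, $P(\lambda)=P(\mu)$, hence $D(\lambda)=S\sqcup P(\lambda)=D(\mu)$, while $m_\lambda$ and $m_\mu$ agree on $S$ and are both identically $1$ on $P(\lambda)$, giving $\lambda=\mu$. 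To pin down $h(\cdot)$, I would read off $\min S$: if $h(\cdot)=a$ then $P(\cdot)=\{-a\}$ and $S=[-a+1,\,w(\cdot)]$, so $\min S=-a+1$; if $h(\cdot)\geq a+1$ then the corner diagonal is unpaired, so $\min S=-h(\cdot)\leq -a-1$. Either way $\min S$ determines $h(\lambda)$ and $h(\mu)$, which must therefore coincide, closing the argument.

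I expect the real work to be bookkeeping rather than conceptual: establishing cleanly the structural description of $P(\lambda)$ used above (that it is a symmetric interval about $-a$ of multiplicity $1$, avoids the corner diagonal unless $h(\lambda)=a$, and depends only on $h(\lambda)$ and $a$), and checking that the degenerate configurations --- single-column partitions, the boundary case $h(\lambda)=a$, and the possibility $S=\emptyset$ --- cause no trouble. As in the discussion leading to \cref{redexpressioneven}, these are all controlled by the semisimplicity bound $2a\geq n-2$.
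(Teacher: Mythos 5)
Your overall route matches the paper's in its first half: start from the reduced expression of \cref{redexpressioneven}, use that the linear factors are pairwise non-associate (as $q$ is not a root of unity) to equate the unpaired diagonal data $S$ and $m|_S$, and then reconstruct the partition. Your parity observation is in fact a genuine plus: since $m_\lambda\equiv 1$ on $P(\lambda)$ and $|P(\lambda)|$ is $0$ or odd, the congruence $|\lambda|\equiv|\mu|\equiv n\pmod 2$ is exactly what rules out the configuration $P(\lambda)=\emptyset$, $P(\mu)\neq\emptyset$ (your single-column example shows this configuration is combinatorially real), a point the paper's proof passes over with ``there is nothing to prove''.

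However, your Case B rests on a false description of $P(\lambda)$. Diagonal $i$ is $(q,t)$-paired with $-2a-i$, so for $a\le h(\lambda)\le 2a$ one has $P(\lambda)=\{-h(\lambda),\ldots,h(\lambda)-2a\}$: the corner diagonal $-h(\lambda)$ is paired (its partner $h(\lambda)-2a\le 0$ lies in $D(\lambda)$) whenever $h(\lambda)\ge a$, not only when $h(\lambda)=a$. Concretely, take $n=6$, $a=2$, $\lambda=(1^4)$: here $h(\lambda)=3=a+1$, $P(\lambda)=\{-3,-2,-1\}$ and $S=\{0\}$, whereas your radius formula $\min\{h-a-1,a\}$ (inherited from the paper's informal discussion, which is itself off by one --- in \cref{examplewithqtpairs} the pair $(0,-8)$ is a genuine $(q,t)$-pair that is omitted) predicts $P(\lambda)=\{-2\}$ and $\min S=-3$. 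Hence your dichotomy ``$h=a\Rightarrow\min S=-a+1$; $h\ge a+1\Rightarrow\min S=-h$'' fails, and with it the step pinning down $h(\lambda)=h(\mu)$. The strategy is repairable: with the corrected $P(\lambda)$ one gets $\min S=h(\lambda)-2a+1$ for $a\le h(\lambda)\le 2a$, so $\min S$ still determines $h(\lambda)$, the only exceptional configurations being $S=\emptyset$ (which forces $\lambda=(1^{2a+1})$ --- a case you flag but do not actually resolve) and the extremal column $(1^{n})$ when $2a=n-2$, where $P(\lambda)=\{-2a,\ldots,0\}$ and $\min S=-2a-1$. As written, though, the key reconstruction step of Case B is based on incorrect structural claims, so the argument has a genuine gap.
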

 \begin{proof}
    Let $W(\lambda,t)=W(\mu,t)$.
     If $-a\notin D(\lambda)$ or $-a\notin D(\mu)$, then the expression of $W(\lambda,t)$ or $W(\mu,t)$ completely determines the partition, so there is nothing to prove. Hence, suppose $-a\in D(\lambda)\cap D(\mu)$. Since the expression in \cref{redexpressioneven} is reduced, we get that 
     \begin{equation*}\label{justanequality}
         \prod_{i\notin P(\lambda)} (1-tq^{2i}T)^{m_{\lambda}(i)}=\prod_{i\notin P(\mu)} (1-tq^{2i}T)^{m_{\mu}(i)}
     \end{equation*}     
     where both are reduced. 
      Suppose, without loss of generality, that $(-a-i,-a+i)$ is a $(q,t)$-pair seen by $\lambda$ but not $\mu$. Then $(1-tq^{-a+i}T)$ is a factor of the right hand side but not of the left hand side, a contradiction. This shows $P(D(\lambda))=P(D(\mu))$. Further, by \cref{justanequality}, we get $D(\lambda)\setminus P(D(\lambda))=D(\mu)\setminus P(D(\lambda))$ and $m_{\lambda}(i)=m_{\mu}(i)$ for all such $i$. But then, $D(\lambda)=D(\mu)$ and $m_{\lambda}(i)=m_{\mu}(i)$ for all $i\notin P(D(\lambda))=P(D(\mu))$ and for $i\in P(D(\lambda))=P(D(\mu))$, it holds that $m_{\lambda}(i)=1=m_{\mu}(i)$. This completes the proof. 
 \end{proof}

\begin{cor}
    For $q\in \C$ not a root of unity and $t=\pm q^{2a}$ where $a\in \Z$ is such that $B_n(q,t)$ is semisimple, then 
    \[
    Z(B_n(q,t))=WL[x_1,\ldots,x_n].
    \]
    Moreover,
    \[
    GZ(n)=\langle x_1^{\pm 1},\ldots, x_n^{\pm 1}\rangle.
    \]
\end{cor}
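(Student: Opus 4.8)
The plan is to run the proof of \cref{centergeneric} essentially verbatim, with the reduced-expression computations of this section replacing \cref{reducedexpressiongeneric}. First, recall that the containment $WL[x_1,\ldots,x_n]\subseteq Z(B_n(q,t))$ holds for all $q,t\in\C$: it is obtained by transporting the equality $Z(aB_n(q,t))=C$ of \cref{cenafbmw} along the surjection $\pi_n$, and that argument never invoked semisimplicity. So it suffices to prove the reverse inequality of dimensions. Since $B_n(q,t)$ is semisimple by hypothesis, Artin--Wedderburn gives $\dim_{\C}Z(B_n(q,t))=\lvert\Lambda_n\rvert$; moreover each $B_i(q,t)$ with $i\le n$ is then semisimple as well, because for $t=\pm q^{2a}$ one is always in case $(1)$ of \cref{RuiSisemisimplicity}, whose conditions only get easier to satisfy as $n$ decreases. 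Consequently \cref{JMactseminormal}, the Murphy bases, and \cref{independentofpath} all remain available, so for $p\in WL[x_1,\ldots,x_n]$ the scalar $p(\lambda):=p(c_\lambda(1),\ldots,c_\lambda(n))$ --- computed along the drunk path using \cref{multisetofcontents} --- is well defined, with all contents $c_\lambda(i)\in\C^{\times}$.

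The key input is the separation statement: for $\lambda\neq\mu$ in $\Lambda_n$ there is $p\in C$ with $p(\lambda)\neq p(\mu)$. I would prove its contrapositive exactly as in \cref{polyseparatesimples}: if $p(\lambda)=p(\mu)$ for all $p\in C$, then in particular this holds for every elementary wheel polynomial $w_k$, which forces $W(\lambda,t)=W(\mu,t)$; and the immediately preceding lemma --- which gives $W(\lambda,t)=W(\mu,t)\iff\lambda=\mu$ for all the relevant parameters, using \cref{redexpressioneven} in the small semisimple range of $a$, \cref{reduced expression high powers} when $\lvert a\rvert\ge n$, the conjugation trick $\lambda\mapsto\lambda^{t}$ to pass to negative $a$, and the direct treatment of $n=2$ --- then yields $\lambda=\mu$. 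With this in hand, the point-separation \cref{separationlemma}, applied to $A=C$ and the tuples $\{(c_\lambda(1),\ldots,c_\lambda(n)):\lambda\in\Lambda_n\}$, produces $\lvert\Lambda_n\rvert$ linearly independent elements of $WL[x_1,\ldots,x_n]$, precisely as in the proof of \cref{centergeneric}. Hence $\dim_{\C}WL[x_1,\ldots,x_n]\ge\lvert\Lambda_n\rvert=\dim_{\C}Z(B_n(q,t))$, and the two algebras coincide.

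For the equality $GZ(n)=\langle x_1^{\pm1},\ldots,x_n^{\pm1}\rangle$, I would argue as in \cref{GZgenbyJM}. The inclusion $C(B_i(q,t),B_{i-1}(q,t))\subseteq GZ(i)$ and the fact $x_i\in C(B_i(q,t),B_{i-1}(q,t))$ only use the branching rule \cref{branching} together with Schur's lemma, so they survive here and give $\langle x_1^{\pm1},\ldots,x_n^{\pm1}\rangle\subseteq GZ(n)$. Conversely, $GZ(n)$ is generated by $Z(1),\ldots,Z(n)$, and by the first part applied to each $B_i(q,t)$ (which we observed is semisimple) we have $Z(i)=WL[x_1,\ldots,x_i]\subseteq\C[x_1^{\pm1},\ldots,x_n^{\pm1}]$; thus $GZ(n)\subseteq\langle x_1^{\pm1},\ldots,x_n^{\pm1}\rangle$ and equality holds.

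The only genuine subtlety is bookkeeping: one must confirm that the case split ($\lvert a\rvert\ge n$ versus $a$ in the small semisimple range), together with conjugation for $a<0$ and the separate $n=2$ analysis, is exhaustive over all parameters $t=\pm q^{2a}$ with $B_n(q,t)$ semisimple, so that one of the two reduced-expression lemmas is always in force. Beyond that point the argument is a literal quotation of the generic case, so I expect no serious obstacle.
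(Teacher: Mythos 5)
Your proposal is correct and follows essentially the same route as the paper: the corollary is obtained by rerunning the proof of \cref{centergeneric} with the reduced-expression/separation lemmas of this section (\cref{redexpressioneven} and the lemma $W(\lambda,t)=W(\mu,t)\iff\lambda=\mu$ for the small semisimple range, \cref{reduced expression high powers} for $\lvert a\rvert\geq n$, conjugation for negative $a$) in place of \cref{reducedexpressiongeneric}, then applying \cref{separationlemma} and Artin--Wedderburn, and deducing the $GZ(n)$ statement exactly as in \cref{GZgenbyJM}. Your added observation that the subalgebras $B_i(q,t)$ remain semisimple (so the identification $Z(i)=WL[x_1,\ldots,x_i]$ can be used at each level) is a correct and worthwhile piece of bookkeeping that the paper leaves implicit.
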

\begin{rem}
    For $t=-q^{2a}$, all the above arguments go through unchanged. When $t=q^{2a}$ with $a< 3-n$, all the above arguments are valid by replacing $\lambda$ with $\lambda^t$.
\end{rem}
\section{Small odd powers.}
In this section, we show that with the exception of $B_3(q,q^{\pm 1})$ and $B_3(q,-q)$, where $q\in\C$ is such that $o(q)=\infty$, the algebra $WL[x_1,\ldots,x_n]$ does not separate all simple modules of $B_n(q,t)$ when $t=q^{2a-1}$ and $1\leq a\leq n-1$. Hence, for small odd powers, we demonstrate that $\dim_{\C}WL[x_1,\ldots,x_n]<\dim_{\C}Z(B_n(q,t))$, where $t=q^{2a-1}$ and $1\leq a \leq n-1$.
\par One can easily calculate for $n=3$ and $t=q,-q,q^{-1}$ and see that $WL[x_1,x_2,x_3]$ separates simple modules as before. Similarly, for $n=2$ and $a=1$ one calculates $W((1^2))=W(\emptyset)$ so that the center of $B_2(q,q)$ is not equal to $WL[x_1,x_2]$.
\par Now suppose $t=q^{2a-1}$, with $1\leq a\leq n-1$ and $n\geq 3$.
\begin{lem}
    For $t=q^{2a-1}$, with $1\leq a\leq n-1$ and $n\geq3$ the following hold.
    \begin{enumerate}
        \item If $(n,a)=(3,1)$, then $Z(B_3(q,q))=WL[x_1,x_2,x_3]$.
        \item If $(n,a)\neq (3,1)$, then there exist partitions $\lambda,\mu$ with $\lambda\neq \mu$ and $W(\lambda,t)=W(\mu,t)$. In other words, $\dim_{\C}WL[x_1,\ldots,x_n]<\dim_{\C}Z(B_n(q,q^{2a-1})$.
    \end{enumerate}
\end{lem}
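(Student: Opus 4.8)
The backbone of the argument is a parameter–free reduction of the rational function $W(\lambda,t)$ of \cref{X}. By \cref{multisetofcontents}, whose proof uses only the combinatorics of the drunk path, for every $(\lambda,f)\in\Lambda_n$ one has
\[
W(\lambda,t)=\frac{\prod_{i\in D(\lambda)}(1-t^{-1}q^{-2i}T)^{m_\lambda(i)}}{\prod_{i\in D(\lambda)}(1-tq^{2i}T)^{m_\lambda(i)}}.
\]
Specialising $t=q^{2a-1}$ and using $t^2=q^{2(2a-1)}$, the $(q,t)$-pairing on diagonals becomes $i\sim j\iff i+j=1-2a$; since $1-2a$ is odd this is a fixed‑point‑free involution of $\Z$, with orbits $\{1-a-k,\,k-a\}$, $k\ge 1$. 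Extending $m_\lambda$ by zero off $D(\lambda)$ and grouping the factors above according to these orbits collapses the expression to
\[
W(\lambda,t)=\prod_{k\ge1}\Big(\tfrac{1-q^{2k-1}T}{1-q^{-(2k-1)}T}\Big)^{d_k(\lambda)},\qquad d_k(\lambda):=m_\lambda(1-a-k)-m_\lambda(k-a),
\]
and, as $q$ is not a root of unity, the factors $\tfrac{1-q^{2k-1}T}{1-q^{-(2k-1)}T}$ are multiplicatively independent, so $W(\lambda,t)=W(\mu,t)$ if and only if $d_k(\lambda)=d_k(\mu)$ for all $k$. The whole lemma is thus a statement about the integer vectors $(d_k)_{k\ge1}$.

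For part (1), $t=q$ and $\Lambda_3=\{((3),0),((2,1),0),((1^3),0),((1),1)\}$. Computing the four $d$–vectors one gets $(3)\mapsto(-1,-1,-1,0,\dots)$, $(2,1)\mapsto(0,-1,0,\dots)$, $(1^3)\mapsto(0,1,0,\dots)$ and $(1)\mapsto(-1,0,\dots)$, which are pairwise distinct; hence already the elementary wheel polynomials $w_k$ separate the four simple $B_3(q,q)$–modules. Since $q$ is not a root of unity, $q\notin\{q^{\pm3},-q^{\pm3},\pm1\}$, so $B_3(q,q)$ is semisimple by \cref{RuiSisemisimplicity} and $\dim_\C Z(B_3(q,q))=|\Lambda_3|=4$. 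Feeding the four content tuples $(c_\lambda(1),c_\lambda(2),c_\lambda(3))$ into \cref{separationlemma} and repeating the proof of \cref{centergeneric} verbatim yields $\dim_\C WL[x_1,x_2,x_3]\ge4$, hence $Z(B_3(q,q))=WL[x_1,x_2,x_3]$.

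For part (2) we must produce, for every admissible $(n,a)\ne(3,1)$, partitions $\lambda\ne\mu$ with $d_k(\lambda)=d_k(\mu)$ for all $k$. The key point is that the first orbit $\{-a,\,1-a\}$ consists of two \emph{adjacent} diagonals, so it is realised by a single domino: adding to a partition a (vertical or horizontal) domino occupying diagonals $-a$ and $1-a$ raises $m_\bullet$ by $1$ on each of those two diagonals and changes no other multiplicity, hence fixes every $d_k$. Explicitly: for $a\ge2$ take $\mu=(n-a,1^{a-2})\vdash n-2$ and $\lambda=(n-a,1^{a})\vdash n$, so $\lambda=\mu+\{(a,1),(a{+}1,1)\}$ with the two new cells of contents $1-a,-a$; for $a=1$ and $n$ even take $\mu=\emptyset$, $\lambda=(1,1)$; for $a=1$ and $n$ odd (so $n\ge5$) take $\mu=(n-2)$, $\lambda=(n-2,2)$, adding the horizontal domino $\{(2,1),(2,2)\}$ of contents $-1,0$. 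These three cases exhaust all $(n,a)$ with $n\ge3$, $1\le a\le n-1$, $(n,a)\ne(3,1)$, and in each $W(\lambda,t)=W(\mu,t)$; moreover the two added $(q,t)$–contents multiply to $1$ (again by $t^2=q^{2(2a-1)}$), so $e_n(\lambda)=e_n(\mu)$ as well. By \cref{powersumelemgenthesame} the algebra $C$ is generated by the $w_k$ together with $e_n^{\pm1}$, so $WL[x_1,\dots,x_n]$ acts by the same scalar on the non‑isomorphic simples $L(\lambda)$ and $L(\mu)$; in particular it does not separate all simple $B_n(q,q^{2a-1})$–modules, and $WL[x_1,\dots,x_n]\subsetneq Z(B_n(q,q^{2a-1}))$.

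It remains to upgrade this strict inclusion to the strict inequality of dimensions, and this is where I expect the real work to lie: for $(n,a)\ne(3,1)$ the algebra $B_n(q,q^{2a-1})$ is \emph{not} semisimple (by \cref{RuiSisemisimplicity}), so Artin–Wedderburn is unavailable. The plan is to use that $B_n(q,t)$ is a symmetric algebra, whence $\dim_\C Z(B_n(q,t))\ge$ (number of simple modules) $=|\Lambda_n|$, and to bound $\dim_\C WL[x_1,\dots,x_n]$ above by $|\Lambda_n|-1$: this subalgebra is the image of the reduced ring $C$, it scales each cell module $\Delta(\lambda,f)$ by $p(\lambda)$ (since $\operatorname{End}_{B_n}\Delta(\lambda,f)=\C$), and by the collision it realises at most $|\Lambda_n|-1$ distinct such scalar systems — the delicate step being to verify that $WL[x_1,\dots,x_n]$ carries no further central nilpotents beyond these. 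Equivalently one argues blockwise via the Rui–Si classification (\cref{admissibility condition}): the $(q,t)$–paired domino added above is a block‑preserving move, so $L(\lambda)$ and $L(\mu)$ lie in one necessarily non‑semisimple block $B$, and one shows $\dim_\C Z(B)$ strictly exceeds the number of characters $WL[x_1,\dots,x_n]$ induces on $B$. Controlling this interaction of $WL[x_1,\dots,x_n]$ with the non‑semisimple blocks is the main obstacle; the combinatorial heart of the statement is contained entirely in the $d$–vector reduction and the domino construction.
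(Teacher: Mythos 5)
Your combinatorial core is exactly the paper's proof. For part (2) the paper simply exhibits the collisions $W((1^3))=W((1))$ for $(n,a)=(3,2)$, $W((n-2,2))=W((n-2))$ for $a=1$, $n\geq 4$, and $W((n-a,1^a))=W((n-a,1^{a-2}))$ for $a\geq 2$; the last two families are precisely your domino pairs (your even/odd split at $a=1$, using $\emptyset$ and $(1,1)$ for $n$ even, is an inessential variant), and your $d$-vector reduction is a correct repackaging of the unreduced product formula of \cref{reducedexpressiongeneric}, valid here because \cref{multisetofcontents} is purely combinatorial. Your part (1) computation, followed by \cref{separationlemma} and the argument of \cref{centergeneric}, is exactly the verification the paper asserts without writing out. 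So on the level of what the paper's proof actually contains, your attempt is complete and matches it.

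Three points in your surrounding discussion should be fixed, though none affects the combinatorial heart. First, it is false that $B_n(q,q^{2a-1})$ is non-semisimple for every $(n,a)\neq(3,1)$: by \cref{RuiSisemisimplicity}, $t=q^{2a-1}$ avoids the critical set as soon as $2a-1>n-3$ (for instance $B_4(q,q^3)$ is semisimple), and in those subcases your own semisimple argument already gives $\dim_{\C}WL<\dim_{\C}Z$. Second, citing \cref{powersumelemgenthesame} for ``$C$ is generated by the $w_k$ and $e_n^{\pm1}$'' overstates that proposition, which only identifies the algebras generated by the $p_k^-$ and by the $w_k$ over $\C[e_n^{\pm1}]$; the paper never proves $C=\C[e_n^{\pm1}][w_1,w_2,\ldots]$. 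You do not need any generation statement: by \cref{multisetofcontents} the two content multisets agree except that the reciprocal pair $\{t,t^{-1}\}$ in one is replaced by the reciprocal pair $\{q,q^{-1}\}$ in the other (for $a=1$ they even coincide), so the wheel condition $p(y,y^{-1},\ldots)=p(1,1,\ldots)$ forces every element of $WL[x_1,\ldots,x_n]$ to act by the same scalar on $L(\lambda)$ and $L(\mu)$. Third, the step you flag as the main obstacle needs neither symmetric-algebra bounds nor control of nilpotents: your skew shape $\lambda/\mu$ is a single domino on two $(q,t)$-paired diagonals with $(q,t)$-contents $q$ and $q^{-1}$ and multiplicities one, so conditions (3)/(4) of \cref{defofblocks} fail, $(\lambda,f)$ and $(\mu,l)$ are not block equivalent, and by \cref{theorem for blocks} the two simples lie in different blocks; since central characters separate blocks (as recalled in the paper), some central element distinguishes them while nothing in $WL[x_1,\ldots,x_n]$ does, whence $WL[x_1,\ldots,x_n]\subsetneq Z(B_n(q,t))$ and the dimension inequality follows in all subcases. (To be fair, the paper's own proof records only the collisions and leaves these last points implicit as well.)
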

\begin{proof}
    Let $n=3,a=2$. Then it is easy to see that $W((1^3))=W((1))$. Now suppose $n\geq 4$ and $a=1$. Then \[
    W((n-2,2))=W((n-2)).\] If $a\geq 2$, then one observes that \[W((n-a,1^a))=W((n-a,1^{a-2})).\]
\end{proof}
One can also calculate and see that for $t=q^{-1},-q$, it holds that $Z(B_3(q,t))=WL[x_1,x_2,x_3]$, but that is not the case for $B_5(q,t)$.
\begin{rem}
    We have thus covered all the cases for which the center is equal to the algebra of wheel Laurent polynomials evaluated at the Jucys-Murphy elements, when $q$ is not a root of unity.
\end{rem}
\section{An equivalent condition for blocks in even case}
In this section, we explore whether in the non-semisimple cases, the central subalgebra $WL[x_1,\ldots,x_n]$ is large enough to separate the blocks of $B_n(q,t)$. In particular, we will show that this is the case for $t=\pm q^{2a}$, when $a\in \Z$ and $q$ generic.
\subsection{Blocks of Birman-Murakami-Wenzl algebras}
 Rui and Si in \cite{RSi2} found combinatorial conditions describing when two cell modules $\Delta(\lambda)$ and $\Delta(\mu)$ are in the same block, for $\lambda,\mu\in \Lambda_n$. We recall the necessary combinatorial notions needed to state their result. 
\begin{defi}\cite{RSi2}\label{defofblocks}
    Let $\lambda\vdash n$ and $\mu\vdash n-2f$. We say that $\lambda$ is $(f,\mu)$-admissible over $\C$, if
    \begin{enumerate}
        \item $\mu\subset \lambda$.
        \item There is a pairing of all diagonals in $\lambda/\mu$. In particular, for every $i\in D(\lambda/\mu)$, there exists $j\in D(\lambda/\mu)$, such that $c_{\lambda/\mu}(i)c_{\lambda/\mu}(j)=1$ and $m_{\lambda/\mu}(i)=m_{\lambda/\mu}(j)$ for any such pair.
        \item If $i\in D(\lambda/\mu)$ is such that $c_{\lambda/\mu}(i)=q$ and diagonals $(i,i-1)$ are $(q,t)$-paired in $\lambda/\mu$, then $m_{\lambda/\mu}(i)=m_{\lambda/\mu}(i-1)$ is even. In Young diagrams, whenever we encounter 
\[
\ytableausetup{boxsize=1.2em}
\begin{ytableau}
  {\scriptstyle q}      & \none \\
  {\scriptstyle q^{-1}} & {\scriptstyle q} \\
  \none                 & {\scriptstyle q^{-1}} \\
  \none                 & \none & \none[\ddots] \\
  \none                 & \none & \none & {\scriptstyle{q}} \\
  \none                 & \none & \none & {\scriptstyle{q^{-1}}} 
\end{ytableau}
\]
in $\lambda/\mu$, then the number of columns is even.

        \item If $i\in D(\lambda/\mu)$ is such that $c_{\lambda/\mu}(i)=-q^{-1}$ and the diagonals $(i,i+1)$ are $(q,t)$-paired in $\lambda/\mu$, then $m_{\lambda/\mu}(i)=m_{\lambda/\mu}(i+1)$ is even. Similarly in Young diagrams, if we encounter 
\[
\ytableausetup{boxsize=1em}
\begin{ytableau}
  {}     &  {}  \\
  \none & {} & {} \\
  \none                 & \none & \none[\ddots] \\
  \none                 & \none & \none & {} & {} \\
\end{ytableau}
\]
in $\lambda/\mu$, where the $(q,t)$-content of the upper left box is $-q^{-1}$, then the number of rows in this configuration is even.
    \end{enumerate}
\end{defi}
\begin{ex}
    Let $t=q, \; n=8$ and $\lambda=(4,2,2), \; \mu=(4,1,1)$. Clearly condition (1) is satisfied. Now for condition (2), we draw the Young diagram of $\lambda$ with its $(q,t)$-contents in every diagonal to see that there are two diagonals in $\lambda/\mu$, in the situation of condition (3), since the diagonal $0$ has $(q,t)$-content equal to $q$ and is paired to $-1$.
    \[\ytableausetup{boxsize=1.5em}\scalebox{0.7}{
    \begin{ytableau}\label{fig}
        q & q^{3} & {q^5} & q^{7} \\
        q^{-1} & q \\
        q^{-3} & q^{-1}
    \end{ytableau}}
    \]
    Then $\lambda/\mu$ is of shape
    \[\scalebox{0.7}{
    \begin{ytableau}
        q \\
        q^{-1}
    \end{ytableau}}
    \]
    In order for $\lambda$ to be $(1,\mu)$ admissible, we would need $m_{\lambda/\mu}(0)=m_{\lambda/\mu}(-1)$ to be even, which is not the case, proving that $(4,2,2)$ is not $(1,(4,1,1))$ admissible. 
\end{ex}
\begin{ex}
    Let us also present an example of an admissible pair of partitions. Set $t=q^2, \; n=8$ and $\lambda=(4,2,2), \; \mu=(4)$. Again $\mu\subset \lambda$ and the skew shape $\lambda/\mu$ is such that diagonals $(-1,-1)$ and $(-2,0)$ are paired.
    \[\scalebox{0.7}{
    \begin{ytableau}
        *(red) -1 & *(blue) 0 \\
        *(blue) -2 & *(red) -1
    \end{ytableau}}
    \]
    So condition (2) is satisfied. Further, since no diagonal has $(q,t)$-content equal to $q$ or $-q^{-1}$, there is nothing to check for conditions (3),(4). Hence, $\lambda$ is $(2,\mu)$ admissible.
\end{ex}
\begin{defi}\cite{RSi2}\label{admissibility condition}
    For $(\lambda,f),(\mu,l)\in \Lambda_n$, we say that $(\lambda,f)$ and $(\mu,l)$ are block equivalent and write $(\lambda,f)\overset{b}\sim(\mu,l)$, if $\lambda$ is $(l_1,\lambda \cap \mu)$-admissible and $\mu$ is $(l_2,\lambda\cap\mu)$-admissible, where $2l_1=\lvert \lambda \rvert -\lvert \lambda \cap \mu \rvert, \; 2l_2=\lvert \mu \rvert -\lvert \lambda \cap \mu \rvert$ and $\lambda\cap \mu$ is the partition occuring from the intersection of the two Young diagrams. In formulas, $(\lambda\cap \mu)_i=\min\{\lambda_i,\mu_i\}$.
\end{defi}
\par We state the result of Rui and Si for blocks of Birman-Murakami-Wenzl algebras over $\C$. For details on blocks of cellular algebras, see \cite{GL}. Note that their result holds more generally over any field $\Bbbk$ of characteristic not equal to $2$.
\begin{thm}\cite{RSi2}\label{theorem for blocks}
    Suppose $q,t\in \C$ and $o(q^2)>n$. Then two cell modules $\Delta(\lambda,f),\Delta(\mu,l)$ are in the same block, if and only if $(\lambda,f)\overset{b}\sim(\mu,l)$.
\end{thm}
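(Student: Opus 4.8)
The plan is to identify the blocks of $B_n(q,t)$ with the linkage classes of its cell modules, and then to match that representation-theoretic relation with the combinatorial relation $\overset{b}\sim$ of \cref{admissibility condition}. Since $B_n(q,t)$ is cellular (\cite{Xi},\cite{GL}), its block decomposition is governed by linkage: two cell modules lie in the same block precisely when they are joined by a chain in which consecutive terms have a nonzero decomposition number $[\Delta(\lambda',f'):L(\mu',l')]\neq 0$. It therefore suffices to show that this linkage relation coincides with $\overset{b}\sim$, and I would prove necessity and sufficiency separately, using the explicit Gram determinants of the cell modules as the main engine.

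For necessity (same block $\Rightarrow$ block equivalent) the first, soft input is that the center acts through a single character on each block. Every wheel Laurent polynomial in the Jucys--Murphy elements is central and, by \cref{multisetofcontents}, acts on $\Delta(\lambda,f)$ by evaluation at the content multiset $\{c_\lambda(i)\}$; hence two cell modules in the same block have the same wheel-polynomial character $W(\lambda,t)=W(\mu,t)$, and passing to reduced form (compare \cref{reducedexpressiongeneric}) forces every diagonal of the skew shape $\lambda/\nu$, with $\nu=\lambda\cap\mu$, to be $(q,t)$-paired with matching multiplicities, which is condition (2) of \cref{defofblocks}. The finer parity constraints (3) and (4) do not follow from this coarse character and must be extracted from the Gram determinant instead: at the critical contents $q$ and $-q^{-1}$, where the cubic relation $(s_i-q)(s_i+q^{-1})(s_i-t^{-1})=0$ degenerates, the relevant Gram factor acquires an extra square, so linkage across such a self-adjacent pairing can occur only when the corresponding diagonal length is even.

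For sufficiency (block equivalent $\Rightarrow$ same block), the technical heart of the argument, I would use a Jantzen-type sum formula for the Gram determinant of $\Delta(\lambda,f)$. The pairing conditions of \cref{defofblocks} are arranged so that, whenever $\nu=\lambda\cap\mu$ is obtained from $\lambda$ by deleting a single $(q,t)$-paired pair of diagonals, a specific binomial factor of the Gram determinant vanishes at the chosen parameters; this vanishing yields a nonzero radical and hence a link from $\Delta(\lambda,f)$ to the smaller cell module. Peeling off paired pairs one box at a time --- controlled by the branching rule \cref{branching} under restriction to $B_{n-1}(q,t)$ --- assembles a chain of links from $\Delta(\lambda,f)$ down to $\Delta(\nu,\cdot)$, and symmetrically from $\Delta(\mu,l)$ down to the same module, so that $\Delta(\lambda,f)$ and $\Delta(\mu,l)$ share a block. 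The hypothesis $o(q^2)>n$ enters throughout to guarantee that the contents $q^{2i}$ with $|i|<n$ are pairwise distinct, so that Gram factors coincide only through genuine $(q,t)$-pairings and never through accidental collisions.

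The main obstacle will be the sufficiency direction together with the precise bookkeeping of conditions (3) and (4). Producing the explicit Gram determinant factorization, or equivalently the layers of the Jantzen filtration, and verifying that its vanishing locus is cut out exactly by the pairing-and-parity data of \cref{defofblocks}, is the genuinely hard step: the wheel-polynomial character is necessary but visibly insufficient, since it need not separate all blocks, so the determinantal input cannot be avoided. I expect the parity analysis at $c=q$ and $c=-q^{-1}$ to be the most delicate point, requiring a careful comparison of the multiplicities $m_{\lambda/\nu}(i)=m_{\lambda/\nu}(i\mp 1)$ modulo $2$ against the order of vanishing of the corresponding quadratic factor, precisely as in the Brauer-algebra prototype of Nazarov \cite{Naz}.
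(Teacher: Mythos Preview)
The paper does not prove this theorem: it is quoted verbatim from Rui and Si \cite{RSi2} and used as a black box, so there is no ``paper's own proof'' to compare against. Your sketch is therefore not a reconstruction of anything in the present paper but rather an outline of what the cited reference does.

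That said, your outline is broadly faithful to the shape of the argument in \cite{RSi2}: blocks of a cellular algebra are linkage classes under the decomposition matrix, and Rui--Si determine these by an explicit computation of the Gram determinants of the cell modules, from which both directions follow. One caution: your necessity argument invokes the wheel-polynomial central character to force condition~(2), but the surrounding paper establishes the implication $(\lambda,f)\overset{b}\sim(\mu,l)\Rightarrow W(\lambda,t)=W(\mu,t)$ only \emph{after} assuming \cref{theorem for blocks} (see \cref{blockseparation} and the lemmas preceding it), so using it inside a proof of \cref{theorem for blocks} would be circular unless you rederive that implication independently. In practice Rui--Si do not route through wheel polynomials at all; both necessity and sufficiency come directly from the Gram determinant factorisation, with conditions (3) and (4) arising exactly as you anticipate from the degenerate pairings at contents $q$ and $-q^{-1}$.
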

\subsection{Separation of blocks}
By \cref{theorem for blocks}, the block of the BMW algebras $B_n(q,t)$ corresponding to $(\lambda,f)\in \Lambda_n$ consists of the partitions $(\mu,l)$ that are admissible, in the sense of \cref{admissibility condition}. In this section, we show that one can have an alternative description of these blocks, by using the action of the commutative subalgebra $WL[x_1,\ldots,x_n]$ of $B_n(q,t)$.
\par The blocks of $B_n(q,t)$ give a set partition of the set of isomorphism classes of simple modules, $\Lambda_n$. Now for a central element $c\in Z(B_n(q,t))$, using Schur's lemma we get that $c$ acts on any simple module as a scalar. Denote this scalar by $\chi_{\lambda}(c)$. This lifts to an algebra morphism, called the central character corresponding to $\lambda$, 
\begin{align*}
    \chi_{\lambda}:Z(B_n(q,t))\rightarrow \C.
\end{align*}
It is a well known result that central characters separate blocks, in the sense that for partitions $\lambda,\mu$, it holds that $\chi_{\lambda}=\chi_{\mu}$ if and only if $\lambda\overset{b}\sim\mu$.
\par Even though in the non-semisimple cases we cannot use the previous techniques to show that the center is equal to $WL[x_1,\ldots,x_n]$ as the Artin-Wedderburn theorem is not available in the non-semisimple case, we will show that this is a large enough central subalgebra to separate blocks. Comparing with other cases such as the partition algebra, \cite{CRE}, we make the following conjecture.
\begin{conj}
 For $q\in\C$ generic and $t=q^{2a}$ such that $B_n(q,t)$ is non-semisimple, it holds that \[Z(B_n(q,q^{2a})=WL[x_1,\ldots,x_n].\]
 \end{conj}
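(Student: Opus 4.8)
The inclusion $WL[x_1,\ldots,x_n]\subseteq Z(B_n(q,t))$ holds for every choice of parameters, being the image of $Z(aB_n(q,t))=C$ under $\pi_n$ (\cref{cenafbmw} together with \cref{WL}), so the whole force of the statement is the reverse inclusion $Z(B_n(q,q^{2a}))\subseteq WL[x_1,\ldots,x_n]$. The plan is to reduce to a single block and there prove equality by a (nilpotent-aware) dimension comparison. The first step is to place the block idempotents inside $WL[x_1,\ldots,x_n]$. Assuming the block characterisation $W(\lambda,t)=W(\mu,t)\iff(\lambda,f)\overset{b}\sim(\mu,l)$ of the preceding section, the wheel-polynomial central characters separate distinct blocks, so \cref{separationlemma}, applied to $C$ and to one content tuple $(c_\lambda(1),\ldots,c_\lambda(n))$ per block, yields for each block $B$ an element of $WL[x_1,\ldots,x_n]$ whose central character is the indicator of $B$. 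Such an element equals $\eps_B$ modulo nilpotents; since $Z(\eps_B B_n)$ is a local commutative ring, $\eps_B$ is recovered from it by Hermite interpolation, so $\eps_B\in WL[x_1,\ldots,x_n]$. Consequently $Z=\bigoplus_B Z(\eps_B B_n)$ and $WL[x_1,\ldots,x_n]=\bigoplus_B WL[x_1,\ldots,x_n]\,\eps_B$, and it suffices to prove $WL[x_1,\ldots,x_n]\,\eps_B=Z(\eps_B B_n)$ for each block $B$.

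The essential difficulty is that, with $B_n(q,q^{2a})$ non-semisimple, Artin--Wedderburn no longer computes $\dim_\C Z$: a block center $Z(\eps_B B_n)$ is a local algebra whose dimension generally exceeds the number of simples in $B$, the excess being carried by nilpotent central elements. So the target is not a fixed number but the genuine equality $\dim_\C WL[x_1,\ldots,x_n]\,\eps_B=\dim_\C Z(\eps_B B_n)$, after which $WL[x_1,\ldots,x_n]\,\eps_B\subseteq Z(\eps_B B_n)$ forces equality. The natural tool is Enyang's Murphy basis: on a cell module $\Delta(\lambda,f)$ the Jucys--Murphy elements act by the explicit upper-triangular matrices of \cref{JMactseminormal}, whose diagonals \emph{repeat} precisely when two updown tableaux share a $(q,t)$-content multiset, which is exactly the phenomenon producing non-semisimplicity. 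Hence a wheel polynomial $p$ acts on $\Delta(\lambda,f)$ as $\chi_\lambda(p)\,\mathrm{id}+N$ with $N$ a nontrivial nilpotent, and I would study the image of $WL[x_1,\ldots,x_n]$ inside $\bigoplus_{(\lambda,f)\in\Lambda_n}\End_\C\Delta(\lambda,f)$, reading the nilpotent blocks off the content coincidences, to compute $\dim_\C WL[x_1,\ldots,x_n]\,\eps_B$ directly from the combinatorics of $(q,t)$-pairings.

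The main obstacle is the identification of $\dim_\C Z(\eps_B B_n)$ and the proof that the wheel-polynomial nilpotents fill it. For the former I would invoke the explicit description of the non-semisimple even blocks of Rui--Si (\cref{theorem for blocks}): with $q$ not a root of unity and $t=q^{2a}$ these blocks are quasi-hereditary with weight poset organised by the $(q,t)$-paired diagonals, and are expected to be Morita equivalent to the generalised Khovanov arc algebras governing the corresponding blocks of parabolic category $\mathcal{O}$ of type $B/C/D$, whose centers are computed as cohomology rings of Springer fibres; this pins down $\dim_\C Z(\eps_B B_n)$ combinatorially. One must then match this against the wheel-polynomial count from the previous paragraph, i.e.\ show that no wheel polynomial is forced to act semisimply on a block where the center requires a nilpotent, and that the nilpotents arising from content coincidences span the radical of $Z(\eps_B B_n)$. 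The principal risk is that the even BMW blocks are genuinely more intricate than a single linear chain once several self-paired and cross-paired diagonals interact (already visible in \cref{examplewithqtpairs}), so that controlling the joint nilpotent action of $x_1,\ldots,x_n$ across such a block is delicate. An alternative lens I would keep in reserve is the deformation-theoretic reformulation over $\C[t^{\pm1}]$: since $Z$ and $WL[x_1,\ldots,x_n]$ agree at the generic point, the quotient $Z/WL[x_1,\ldots,x_n]$ is torsion supported at $t=q^{2a}$, and the conjecture is equivalent to $WL[x_1,\ldots,x_n]$ being a saturated (pure) submodule of $Z$; proving purity directly may circumvent the per-block center computation, but it again reduces to the same special-fibre analysis that constitutes the crux.
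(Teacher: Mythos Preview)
The statement you are attempting is recorded in the paper as a \emph{conjecture}, not a theorem: the paper proves only the block-separation result \cref{blockseparation} and then explicitly leaves the full equality $Z(B_n(q,q^{2a}))=WL[x_1,\ldots,x_n]$ open, motivated by analogy with the partition-algebra case. There is therefore no proof in the paper to compare your proposal against; what you have written is a strategy toward an open problem.

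As a strategy, your first reduction is sound and worth keeping. Once \cref{blockseparation} shows that wheel central characters separate blocks, an idempotent-lifting argument (your ``Hermite interpolation'') does place every block idempotent $\eps_B$ inside $WL[x_1,\ldots,x_n]$, so the problem localises to a single block. The genuine gap is everything thereafter, and it is twofold. First, your computation of $\dim_\C Z(\eps_B B_n)$ rests on an ``expected'' Morita equivalence between even BMW blocks and generalised Khovanov arc algebras (with centres given by Springer-fibre cohomology); no such equivalence is established in the literature for BMW algebras, so you are importing one conjecture to attack another. Second, and more structurally, you slide from ``the $x_i$ have repeated diagonal entries in the Murphy basis of $\Delta(\lambda)$'' to ``some wheel polynomial acts with nontrivial nilpotent part on $\Delta(\lambda)$''. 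These are different statements: the individual $x_i$ are not central, and passing to a wheel-symmetric combination could in principle annihilate all strictly upper-triangular contributions, leaving every element of $WL[x_1,\ldots,x_n]$ acting as a pure scalar on every cell module while $Z(\eps_B B_n)$ still has dimension greater than one. Ruling this out is precisely the content of the conjecture, and nothing in your outline supplies a mechanism for it. Your deformation-theoretic reformulation at the end (purity of $WL$ inside $Z$ over $\C[t^{\pm1}]$) is a correct restatement, but, as you yourself observe, it reduces to the same special-fibre computation and so does not bypass the difficulty.
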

 \begin{rem}
     Note that progress has been made to compute centers in the non-semisimple cases. In particular, the center of the  non-semisimple walled Brauer algebras $B_{r,1}(\delta)$ was computed in \cite{chavli2024centerwalledbraueralgebra}.
 \end{rem}
\begin{thm}\label{blockseparation}
    Let $q\in \C$ be generic and $t=\pm q^{2a}$, for $a\in \{0,\ldots,\lfloor\frac{n}{2}\rfloor-2\}$, such that $B_n(q,t)$ is not semisimple. Then for two cell modules $\Delta(\lambda,f),\Delta(\mu,l)$ we have $(\lambda,f)\overset{b}\sim(\mu,l)$, if and only if $W(\lambda,t)=W(\mu,t)$.
\end{thm}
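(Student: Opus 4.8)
The plan is to collapse the whole statement onto a single combinatorial invariant of a partition. Write $t=\pm q^{2a}$; since $t^{2}=q^{4a}$, the sign of $t$ plays no role below. Let $\iota\colon\Z\to\Z$ be the involution $\iota(i)=-2a-i$, with unique fixed point $-a$, and for a partition $\lambda$ extend $m_\lambda$ by zero off $D(\lambda)$ and set $e_\lambda(i):=m_\lambda(i)-m_\lambda(\iota i)$; this is a finitely supported function with $e_\lambda\circ\iota=-e_\lambda$ and $e_\lambda(-a)=0$. First I would rewrite $W(\lambda,t)$ in reduced form. Starting from the identity of \cref{reducedexpressiongeneric} (which holds for any $t$ as an identity of rational functions, being a direct consequence of \cref{multisetofcontents}), reindex the numerator by $i\mapsto\iota i$ and use $t^{-1}q^{4a}=t$ to get
\[
W(\lambda,t)=\prod_{i\in\Z}\bigl(1-tq^{2i}T\bigr)^{-e_\lambda(i)}.
\]
Because $q$ is not a root of unity, the scalars $\{tq^{2i}\}_{i\in\Z}$ are pairwise distinct, so this expression is reduced; hence $W(\lambda,t)=W(\mu,t)$ if and only if $e_\lambda=e_\mu$.

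The second step is to rephrase Rui--Si admissibility in the same terms. If $\nu\subseteq\lambda$, then each diagonal of $\nu$ is an initial segment of the corresponding diagonal of $\lambda$ (the staircase shape of a Young diagram), so $m_{\lambda/\nu}(i)=m_\lambda(i)-m_\nu(i)$; and, by the very computation of the first step, the pairing condition $c_{\lambda/\nu}(i)c_{\lambda/\nu}(j)=1$ in \cref{defofblocks} says exactly $j=\iota i$. Moreover, since $q$ is not a root of unity and $t$ is an \emph{even} power of $q$, no $(q,t)$-content $tq^{2i}$ can equal $q$ or $-q^{-1}$, so conditions (3) and (4) of \cref{defofblocks} are vacuous. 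Consequently $\lambda$ is $\bigl(\tfrac{|\lambda|-|\nu|}{2},\nu\bigr)$-admissible if and only if $\nu\subseteq\lambda$ and $m_\lambda(i)-m_\nu(i)=m_\lambda(\iota i)-m_\nu(\iota i)$ for all $i$, i.e.\ if and only if $\nu\subseteq\lambda$ and $e_\lambda=e_\nu$.

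Now I would set $\nu:=\lambda\cap\mu$ and combine. Here $\nu\subseteq\lambda$, $\nu\subseteq\mu$, and $m_\nu(i)=\min\{m_\lambda(i),m_\mu(i)\}$ for every $i$ (again because diagonals are initial segments). If $(\lambda,f)\overset{b}\sim(\mu,l)$, then by the second step $e_\lambda=e_\nu=e_\mu$, hence $W(\lambda,t)=W(\mu,t)$ by the first step. Conversely, assume $W(\lambda,t)=W(\mu,t)$, so $e_\lambda=e_\mu$; putting $g(i):=m_\lambda(i)-m_\mu(i)$ this reads $g=g\circ\iota$, and a short sign analysis (comparing which of $m_\lambda(i),m_\mu(i)$ is smaller and using $g(i)=g(\iota i)$) gives $m_\nu(i)-m_\nu(\iota i)=m_\lambda(i)-m_\lambda(\iota i)$, i.e.\ $e_\nu=e_\lambda=e_\mu$. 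Finally, the ``levels'' $\tfrac{|\lambda|-|\nu|}{2}$ and $\tfrac{|\mu|-|\nu|}{2}$ are nonnegative integers: $|\lambda|-|\nu|=\sum_i\max\{0,g(i)\}$, which since $g$ is $\iota$-symmetric equals $\max\{0,g(-a)\}$ plus an even number, and $g(-a)$ is even because $\sum_i g(i)=|\lambda|-|\mu|$ is even (both sizes are $\equiv n\bmod 2$) while $\sum_i g(i)\equiv g(-a)\bmod 2$; likewise for $|\mu|-|\nu|$. Hence both $\lambda$ and $\mu$ are admissible over $\nu$, i.e.\ $(\lambda,f)\overset{b}\sim(\mu,l)$, which closes the equivalence.

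I expect the only genuinely delicate point to be the second step: faithfully unwinding \cref{defofblocks} into the clean statement ``$e_\lambda=e_\nu$'', in particular verifying that the $c(i)=q$ and $c(i)=-q^{-1}$ clauses impose no condition here and that each skew diagonal is an initial segment of the ambient diagonal. Once these are in place, the first step is a one-line reindexing, and the last step — which hinges entirely on $m_{\lambda\cap\mu}(i)=\min\{m_\lambda(i),m_\mu(i)\}$ — is routine.
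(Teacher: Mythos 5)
Your argument is correct, and it reaches the equivalence by a more streamlined route than the paper's. The paper proceeds through \cref{7.11} (multiplicativity $W(\lambda,t)=W(\mu,t)W(\lambda/\mu,t)$), \cref{7.12} (for nested $\nu\subseteq\lambda$, $W$-equality is equivalent to complete $(q,t)$-pairing of the skew diagonals with equal multiplicities), and then, for a general pair, the observation that $D(\lambda/\lambda\cap\mu)\cap D(\mu/\lambda\cap\mu)=\emptyset$, which forces both skew factors to equal $1$. You instead collapse everything onto the single invariant $e_\lambda(i)=m_\lambda(i)-m_\lambda(-2a-i)$: the reindexing $t^{-1}q^{-2i}=tq^{2(-2a-i)}$ turns the identity of \cref{reducedexpressiongeneric} (valid for any $t$, since it only uses \cref{multisetofcontents}) into the reduced form $W(\lambda,t)=\prod_i(1-tq^{2i}T)^{-e_\lambda(i)}$, so $W$-equality becomes $e_\lambda=e_\mu$; Rui--Si admissibility becomes $e_\lambda=e_{\lambda\cap\mu}$, after noting --- as the paper also remarks --- that clauses (3),(4) of \cref{defofblocks} are vacuous for $t=\pm q^{2a}$ and that the pairing condition singles out $j=-2a-i$; and the identity $m_{\lambda\cap\mu}(i)=\min\{m_\lambda(i),m_\mu(i)\}$ reduces the converse to the one-line computation $e_{\lambda\cap\mu}(i)=e_\lambda(i)-\max\{0,g(i)\}+\max\{0,g(\iota i)\}=e_\lambda(i)$ for the $\iota$-symmetric function $g=m_\lambda-m_\mu$. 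What this buys, besides avoiding the skew-shape bookkeeping, is that you explicitly verify a point the paper leaves implicit: that the levels $l_1,l_2$ in \cref{admissibility condition} are genuine integers. Your parity argument ($g(-a)\equiv\sum_i g(i)=|\lambda|-|\mu|\equiv 0\bmod 2$, so $|\lambda|-|\lambda\cap\mu|=\max\{0,g(-a)\}+\text{even}$) settles this cleanly; in the paper it does follow, though unsaid, from the disjointness of the two skew diagonal sets together with $|\lambda|\equiv|\mu|\bmod 2$, which prevents the self-paired diagonal $-a$ from carrying odd multiplicity in both skews. The two delicate points you flagged (skew diagonal multiplicities as differences, and the vacuousness of the content-$q$ and content-$(-q^{-1})$ clauses since $\pm q^{2a+2i}=q$ or $-q^{-1}$ would force $q$ to be a root of unity) both check out.
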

\begin{rem}
    The description of blocks for the BMW algebras when $t=\pm q^{2a}$ is simpler than that for odd powers of $q$, since conditions (c),(d) will never appear in the former case. Note also that in the case $t=\pm q^{2a+1}$, \cref{blockseparation} does not hold.
\end{rem}
\begin{ex}
    Let $n=2, \; t=q^{-1}$. Then $B_2(q,t)$ is not semisimple. In particular, for the partition $(2)$, we have a pairing of $(0,1)$ as their corresponding $(q,t)$-contents are $q^{-1},q$. It is easy to check that $\emptyset \overset{b}\nsim(2)$, since condition (d) of \cref{defofblocks} but $W(\emptyset)=W((2))=1$. 
\end{ex}
\par We first have to show that even in the non-semisimple case, the central subalgebra $WL[x_1,\ldots,x_n]$ acts on the simple heads of the cell modules by the same constant.
\par In \cite{Xi}, Xi classified the simple modules of $B_n(q,t)$, for any $t\in \C$. In particular, if $t\notin\{-q,q^{-1}\}$, or $t\in\{-q,q^{-1}\}$ and $n$ is odd, the simple modules are indexed by $\Lambda_n$. When $t\in \{-q,q^{-1}\}$ and $n$ is even, then the simple modules are indexed by $\Lambda_n\setminus\{(\frac{n}{2},\emptyset)\}$. That is, 
\[
L((\frac{n}{2},\emptyset))=\Delta((\frac{n}{2},\emptyset))/\operatorname{rad}(\Delta((\frac{n}{2},\emptyset))=\{0\}.
\] Since for this entire section $t=q^{2a}$ and $o(q^2)>n$, the simple modules are never trivialized.
\par For $\lambda\vdash n-2f$, the path $T^{d}(\lambda)$ is maximal with respect to both the partial order on $T^{\operatorname{ud}}(\lambda)$ defined by Enyang in \cite{EnyangMurphyBasisold} and the partial order defined by Rui and Si in \cite{RSi2}. This together with the definition of a Murphy basis of a cell module $\Delta(\lambda)$ implies that the basis element $m_T$ is a simultaneous eigenvector of $x_1,\ldots,x_n$, with the same eigenvalues as before. Since $L(\lambda)=\Delta(\lambda)/\operatorname{rad}(\Delta(\lambda))$, we get 
\begin{cor}
    For the simple $L(\lambda,f)$ and the Murphy basis element $m_T$ of $\Delta(\lambda,f)$ corresponding to the path $T^{d}(\lambda)$, it holds that
    \[
    x_i (m_T+ \operatorname{rad}(\Delta(\lambda,f))) = c_T(i)(m_T+\operatorname{rad}(\Delta(\lambda,f)))
    \]
    for all $i=1,\ldots,n$.
\end{cor}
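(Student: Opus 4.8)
The plan is to obtain the identity in two steps: first establish that $m_{T}$ is a genuine eigenvector of each $x_i$ \emph{inside} the cell module $\Delta(\lambda,f)$, and then transport this equation along the canonical projection
\[
\pi\colon \Delta(\lambda,f)\twoheadrightarrow L(\lambda,f)=\Delta(\lambda,f)/\operatorname{rad}(\Delta(\lambda,f)).
\]
The bulk of the work has in fact already been recorded above: Enyang's Murphy basis $\{m_S\mid S\in T^{\operatorname{ud}}_n(\lambda)\}$ of $\Delta(\lambda,f)$ is defined over the base ring $A$, and in it each $x_i$ acts by an upper-triangular matrix whose diagonal entry on $m_S$ is the $(q,t)$-content $c_S(i)$ of \cref{qtcont}. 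Triangularity is a statement over $A$, so it survives the specialization to $\C$ at $t=\pm q^{2a}$; thus
\[
x_i\,m_{T}=c_{T}(i)\,m_{T}+\sum_{S\succ T} r_{S}\,m_{S},
\]
with the sum taken over updown tableaux strictly dominating $T=T^{\operatorname{d}}(\lambda)$ in the Enyang--Rui--Si order.

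Because $T^{\operatorname{d}}(\lambda)$ is the drunk path of \cref{drunkpath}, which is maximal in this order (as noted just above), the sum above is empty and we obtain the exact relation $x_i m_{T}=c_{T}(i)m_{T}$ in $\Delta(\lambda,f)$. For the descent I invoke the cellular machinery of Graham and Lehrer \cite{GL}: the space $\operatorname{rad}(\Delta(\lambda,f))$ is the radical of the invariant bilinear form on the cell module and is therefore a $B_n(q,t)$-submodule; in particular it is preserved by the Jucys--Murphy element $x_i\in B_n(q,t)$ of \cref{jmbmw}. Consequently $\pi$ is a homomorphism of $B_n(q,t)$-modules, and applying $\pi$ to $x_i m_{T}=c_{T}(i)m_{T}$ gives
\[
x_i\bigl(m_{T}+\operatorname{rad}(\Delta(\lambda,f))\bigr)=c_{T}(i)\bigl(m_{T}+\operatorname{rad}(\Delta(\lambda,f))\bigr),
\]
which is precisely the assertion.

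Since $\pi$ may a priori kill $m_{T}$, the equation is formally true even when $m_{T}\in\operatorname{rad}(\Delta(\lambda,f))$; the content for the later block-separation argument is that it is \emph{not} vacuous, i.e.\ that $c_{T}(\cdot)$ really occurs as a system of eigenvalues on the simple head. Here the standing hypotheses $o(q^2)>n$ and $t=\pm q^{2a}$ are what guarantee $L(\lambda,f)\neq 0$, so that the dominant Murphy generator $m_{T^{\operatorname{d}}(\lambda)}$ persists in the quotient. I expect the only points requiring genuine care to be exactly this non-vanishing together with the verification that the upper-triangular action recalled from the semisimple range continues to hold after the non-semisimple specialization; the passage through $\pi$ itself is purely formal.
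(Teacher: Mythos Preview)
Your proof is correct and follows exactly the paper's own argument: use that $T^{\operatorname{d}}(\lambda)$ is maximal in the Enyang/Rui--Si partial order so that the upper-triangular action of each $x_i$ on the Murphy basis collapses to the single term $c_T(i)m_T$, then pass to the quotient $L(\lambda,f)=\Delta(\lambda,f)/\operatorname{rad}(\Delta(\lambda,f))$. Your closing remark that $L(\lambda,f)\neq 0$ forces $m_{T^{\operatorname{d}}(\lambda)}\notin\operatorname{rad}$ is not quite justified as stated (non-vanishing of the head does not single out which basis vectors survive), but as you note this is irrelevant to the corollary itself and only matters for the subsequent scalar computation, where Schur's lemma lets one work with any nonzero vector.
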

This means that $WL[x_1,\ldots,x_n]$ acts on any simple module $L(\lambda)$ by the same scalar as in the semisimple case. We first show that $W(\lambda,t)$ is compatible with inclusions $\mu \subset \lambda$.

\begin{defi}
 For $\mu\subseteq \lambda$, we let 
        \[
        W(\lambda/\mu,t)=\frac{\prod\limits_{i\in D(\lambda/\mu)}(1-t^{-1}q^{-2i}T)^{m_{\lambda/\mu}(i)}}{\prod\limits_{i\in D(\lambda/\mu)}(1-tq^{2i}T)^{m_{\lambda/\mu}(i)}}
        \]
    \end{defi}
    
 \begin{lem}\label{7.11}
        For $\mu\subseteq \lambda$, it holds
        \begin{align*}
           W(\lambda,t)=W(\mu,t)W(\lambda/\mu,t). 
        \end{align*}
    \end{lem}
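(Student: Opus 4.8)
The plan is to reduce the identity to the ``product over diagonals'' shape of $W(\lambda,t)$ and then to a one-line box-counting observation. First I would recall that, by \cref{multisetofcontents}, the multiset $\{c_\lambda(i)\mid 1\le i\le n\}$ attached to the drunk path depends only on the diagonal datum $(D(\lambda),m_\lambda)$, and that the cancellation performed in the proof of \cref{reducedexpressiongeneric} is a purely formal identity of rational functions in $T$: it uses nothing about $q,t$ other than $q^2\neq 1$, and in particular does not use semisimplicity or the reducedness of the right-hand side. Consequently, for any $q,t\in\C$ with $q^2\neq 1$ and any $(\lambda,f)\in\Lambda_n$,
\[
W(\lambda,t)=\frac{\prod_{i\in D(\lambda)}(1-t^{-1}q^{-2i}T)^{m_\lambda(i)}}{\prod_{i\in D(\lambda)}(1-tq^{2i}T)^{m_\lambda(i)}},
\]
and the same formula holds with $\lambda$ replaced by $\mu$; by definition $W(\lambda/\mu,t)$ is already presented in exactly this shape.

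Next I would establish the combinatorial core: for $\mu\subseteq\lambda$ and every $i\in\Z$,
\[
m_\lambda(i)=m_\mu(i)+m_{\lambda/\mu}(i),
\]
with the convention $m_\nu(i)=0$ whenever $i\notin D(\nu)$. This is immediate, since the boxes of the Young diagram of $\lambda$ having content $i$ are split into those belonging to $\mu$ and those belonging to the skew shape $\lambda/\mu$, and the content $j-i$ of a box in position $(i,j)$ is not affected by these inclusions. In particular the support of $m_\lambda$ is the union of the supports of $m_\mu$ and $m_{\lambda/\mu}$, so all three rational functions above may be rewritten as products over all of $\Z$ with only finitely many nontrivial factors.

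Finally I would substitute this additivity into the numerator and denominator of the formula for $W(\lambda,t)$ and split each of the two products as a product over the $\mu$-boxes times a product over the $\lambda/\mu$-boxes; this factors $W(\lambda,t)$ precisely as $W(\mu,t)\cdot W(\lambda/\mu,t)$, which is the claim. I do not expect a genuine obstacle here: the only point requiring a word of care is that the factorised formula for $W(\lambda,t)$ must be invoked as a formal identity valid for the non-semisimple parameters of this section, so one should cite \cref{multisetofcontents} for it rather than \cref{reducedexpressiongeneric}, the latter additionally asserting a reducedness that we neither have nor need.
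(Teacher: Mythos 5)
Your proposal is correct and follows essentially the same route as the paper: the additivity $m_\lambda(i)=m_\mu(i)+m_{\lambda/\mu}(i)$ combined with the diagonal-product expression for $W(\lambda,t)$, then splitting the product over $D(\lambda)$ into the parts supported on $D(\mu)$ and $D(\lambda/\mu)$. Your extra remark that the factorised formula is a formal identity (so reducedness from \cref{reducedexpressiongeneric} is not needed in the non-semisimple range) is a sensible precaution, but it does not change the argument, which the paper carries out in the same way.
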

    \begin{proof}
    For any $i\in D(\lambda)$, we have $m_{\lambda}(i)=m_{\mu}(i)+m_{\lambda/\mu}(i)$. Now calculate
        \begin{align*}
           & W(\lambda,t)=\prod_{i\in D(\lambda)}\frac{(1-t^{-1}q^{-2i}T)^{m_{\mu}(i)}(1-t^{-1}q^{-2i}T)^{m_{\lambda/\mu}(i)}}{(1-tq^{2i}T)^{m_{\mu}(i)}(1-tq^{2i}T)^{m_{\lambda/\mu}(i)}}= \\
           & = \prod_{i\in D(\mu)}\frac{(1-t^{-1}q^{-2i}T)^{m_{\mu}(i)}}{(1-tq^{2i}T)^{m_{\mu}(i)}}\prod_{i\in D(\lambda/\mu)}\frac{(1-t^{-1}q^{-2i}T)^{m_{\lambda/\mu}(i)}}{(1-tq^{2i}T)^{m_{\lambda/\mu}(i)}}=W(\mu,t)W(\lambda/\mu,t),
        \end{align*}
        where the first product (respectively, second) runs over $i\in D(\mu)$ (over $D(\lambda/\mu) )$, since if $i\notin D(\mu)$ ($i\notin D(\lambda/\mu)$) then $m_{\mu}(i)=0$ (respectively, $m_{\lambda/\mu}(i)=0$).
    \end{proof}
    \begin{lem}\label{7.12}
        For $\mu \subseteq \lambda$, we have
        \[
        W(\lambda,t)=W(\mu,t),
        \]
        if and only if all $i\in D(\lambda/\mu)$ are $(q,t)$-paired. Further, for all such pairs $i,j\in D(\lambda/\mu)$, we have $m_{\lambda/\mu}(i)=m_{\lambda/\mu}(j)$.
    \end{lem}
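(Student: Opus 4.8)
The plan is to factor out $W(\mu,t)$ using \cref{7.11}, reduce the claim to a statement about the single rational function $W(\lambda/\mu,t)$, and then analyse that by unique factorization in $\C[T]$.

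First, by \cref{7.11} we have $W(\lambda,t)=W(\mu,t)\,W(\lambda/\mu,t)$. The rational function $W(\mu,t)$ is a quotient of two polynomials in $T$, each with constant term $1$, hence a nonzero element of $\C(T)$; so $W(\lambda,t)=W(\mu,t)$ is equivalent to $W(\lambda/\mu,t)=1$. Since $\C[T]$ is an integral domain, this last equality is in turn equivalent to the polynomial identity
\[
\prod_{i\in D(\lambda/\mu)}(1-t^{-1}q^{-2i}T)^{m_{\lambda/\mu}(i)}=\prod_{i\in D(\lambda/\mu)}(1-tq^{2i}T)^{m_{\lambda/\mu}(i)}.
\]

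Next I would compare the two sides as products of linear factors over $\C$. The right-hand side has roots $t^{-1}q^{-2i}$ with multiplicity $m_{\lambda/\mu}(i)$, and the left-hand side has roots $tq^{2i}$ with multiplicity $m_{\lambda/\mu}(i)$; in each case, because $q$ is not a root of unity, distinct $i\in D(\lambda/\mu)$ produce distinct roots. A polynomial with constant term $1$ is determined by its multiset of roots, so the displayed identity holds if and only if the multiset $\{\,tq^{2i}\text{ with multiplicity }m_{\lambda/\mu}(i):i\in D(\lambda/\mu)\,\}$ equals the multiset $\{\,t^{-1}q^{-2i}\text{ with multiplicity }m_{\lambda/\mu}(i):i\in D(\lambda/\mu)\,\}$. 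Unwinding this: the identity holds exactly when for every $i\in D(\lambda/\mu)$ there is $j\in D(\lambda/\mu)$ with $tq^{2i}=t^{-1}q^{-2j}$, i.e. $(tq^{2i})(tq^{2j})=1$, meaning $i$ and $j$ are $(q,t)$-paired, and $m_{\lambda/\mu}(i)=m_{\lambda/\mu}(j)$ for every such pair. For the converse direction I would note that $j=j(i)$ is uniquely determined by $i$ (again since $q$ is not a root of unity), that $i\mapsto j(i)$ is an involution of $D(\lambda/\mu)$, and that reindexing the left-hand product by this involution and using $m_{\lambda/\mu}(i)=m_{\lambda/\mu}(j(i))$ turns it into the right-hand product, so the identity follows.

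I do not expect a real obstacle here: the whole argument rests on unique factorization in $\C[T]$ and on the standing hypothesis that $q$ is not a root of unity, which is precisely what makes distinct diagonals give distinct roots $tq^{2i}$ and forces the pairing partner $j(i)$ to be unique. The only minor point to handle with a little care is the phrase ``determined by its multiset of roots'' for the (non-monic) factors involved, but since every polynomial occurring has constant term $1$ this is harmless.
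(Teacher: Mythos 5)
Your proposal is correct and follows essentially the same route as the paper: factor via \cref{7.11} to reduce the claim to $W(\lambda/\mu,t)=1$, clear denominators, and compare the linear factors of the resulting polynomial identity, using that $q$ is not a root of unity to guarantee distinct diagonals give distinct factors. The extra care you take with multiplicities and the pairing involution only makes explicit what the paper's argument leaves implicit.
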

    \begin{proof}
        We have $W(\lambda,t)=W(\mu,t)$ if and only if $W(\lambda/\mu,t)=1$. The latter occurs if and only if 
        \begin{align}\label{8.14}
          \frac{\prod\limits_{i\in D(\lambda/\mu)}(1-t^{-1}q^{-2i}T)^{m_{\lambda/\mu}(i)}}{\prod\limits_{i\in D(\lambda/\mu)}(1-tq^{2i}T)^{m_{\lambda/\mu}(i)}}=1.
        \end{align}
        Since both the numerator and the denominator are irreducible in the case $q$ is not a root of unity, \cref{8.14} happens if and only if for all $i\in D(\lambda/\mu)$ there exists $j\in D(\lambda/\mu)$ such that $i,j$ are $(q,t)$-paired and $m_{\lambda/\mu}(i)=m_{\lambda/\mu}(j)$.
    \end{proof}
    \begin{lem}
        For $(\mu,l),(\lambda,f)\in \Lambda_n$
        \[
        W(\lambda,t)=W(\mu,t)
        \]
        if and only if all $i\in D(\lambda/\lambda\cap\mu)$ and $i\in D(\mu/\lambda\cap\mu)$ are $(q,t)$-paired.
    \end{lem}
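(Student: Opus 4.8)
The plan is to split the two diagonal conditions across the two inclusions $\lambda\cap\mu\subseteq\lambda$ and $\lambda\cap\mu\subseteq\mu$, reduce the equality $W(\lambda,t)=W(\mu,t)$ to an equality of the two ``skew'' rational functions via \cref{7.11}, and then run a root-counting argument in $\C[T]$ that is powered by one structural fact: writing $\nu=\lambda\cap\mu$, the diagonal sets $D(\lambda/\nu)$ and $D(\mu/\nu)$ are disjoint. Throughout, recall that $t=\pm q^{2a}$ and $q$ is not a root of unity, so $t^{2}=q^{4a}$ and the linear polynomials $1-q^{2a+2i}T$ attached to distinct integers $i$ are pairwise non-associate in $\C[T]$.

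First I would dispose of the easy implication. By \cref{7.11}, $W(\lambda,t)=W(\nu,t)\,W(\lambda/\nu,t)$ and $W(\mu,t)=W(\nu,t)\,W(\mu/\nu,t)$; since $W(\nu,t)$ is a nonzero rational function, $W(\lambda,t)=W(\mu,t)$ is equivalent to $W(\lambda/\nu,t)=W(\mu/\nu,t)$. If every diagonal of $\lambda/\nu$ is $(q,t)$-paired inside $\lambda/\nu$, then \cref{7.12} applied to $\nu\subseteq\lambda$ gives $W(\lambda,t)=W(\nu,t)$, and likewise $W(\mu,t)=W(\nu,t)$ when every diagonal of $\mu/\nu$ is $(q,t)$-paired; hence $W(\lambda,t)=W(\mu,t)$. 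So only the forward direction remains.

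Next I would establish the disjointness $D(\lambda/\nu)\cap D(\mu/\nu)=\emptyset$. If some content $d$ lay in both, there would be cells $(i,i+d)\in\lambda/\nu$ and $(i',i'+d)\in\mu/\nu$. A cell of $\lambda/\nu$ in row $i$ forces $\lambda_i>\nu_i$, hence $\nu_i=\min(\lambda_i,\mu_i)=\mu_i$, so $\mu_i<i+d\le\lambda_i$; symmetrically a cell of $\mu/\nu$ in row $i'$ gives $\lambda_{i'}<i'+d\le\mu_{i'}$. Comparing these two chains, using that $\lambda$ and $\mu$ are weakly decreasing, produces a contradiction in each of the three cases $i=i'$, $i<i'$, $i>i'$ (for instance, if $i<i'$ then $\mu_i\ge\mu_{i'}\ge i'+d>i+d>\mu_i$; the case $i>i'$ is symmetric with $\lambda$ and $\mu$ swapped). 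This is the one step that takes a genuine idea; everything around it is bookkeeping.

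Finally, for the forward direction I would clear denominators in $W(\lambda/\nu,t)=W(\mu/\nu,t)$ to obtain the polynomial identity $P_{\lambda/\nu}\,Q_{\mu/\nu}=P_{\mu/\nu}\,Q_{\lambda/\nu}$ in $\C[T]$, where $P_{S}=\prod_{i\in D(S)}(1-t^{-1}q^{-2i}T)^{m_{S}(i)}$ and $Q_{S}=\prod_{i\in D(S)}(1-tq^{2i}T)^{m_{S}(i)}$, so that $W(S,t)=P_{S}/Q_{S}$. Since $q$ is not a root of unity, unique factorization in $\C[T]$ lets me compare multiplicities of roots. Fixing $i_{0}\in D(\lambda/\nu)$ and reading off the multiplicity of the root $T=tq^{2i_{0}}$ on each side (the only contributing factors come from the $i=i_0$ term of $P$ and the $j=-2a-i_0$ term of $Q$), I get
\[
m_{\lambda/\nu}(i_{0})+m_{\mu/\nu}(-2a-i_{0})=m_{\lambda/\nu}(-2a-i_{0})+m_{\mu/\nu}(i_{0}),
\]
with $m_{S}(\cdot)$ read as $0$ off $D(S)$. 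By disjointness $m_{\mu/\nu}(i_{0})=0$; since the left side is at least $m_{\lambda/\nu}(i_{0})>0$ we must have $-2a-i_{0}\in D(\lambda/\nu)$, whence $m_{\mu/\nu}(-2a-i_{0})=0$ as well, and the identity collapses to $m_{\lambda/\nu}(i_{0})=m_{\lambda/\nu}(-2a-i_{0})$. As $c_{\lambda/\nu}(i_{0})\,c_{\lambda/\nu}(-2a-i_{0})=t^{2}q^{-4a}=1$, this exhibits $i_{0}$ and $-2a-i_{0}$ as a $(q,t)$-pair of equal-length diagonals of $\lambda/\nu$; since $i_{0}$ was arbitrary, all diagonals of $\lambda/\nu$ are $(q,t)$-paired. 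Running the mirror argument at the root $T=tq^{2j_{0}}$ for $j_{0}\in D(\mu/\nu)$ yields the same statement for $\mu/\nu$, completing the proof.
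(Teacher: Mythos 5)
Your proof is correct, and while it follows the paper's overall skeleton (reduce to $W(\lambda/\lambda\cap\mu,t)=W(\mu/\lambda\cap\mu,t)$ via \cref{7.11}, then exploit that the two skew shapes have disjoint diagonal supports), the execution of the hard direction is genuinely different and in two respects more complete. First, you actually prove the disjointness $D(\lambda/\lambda\cap\mu)\cap D(\mu/\lambda\cap\mu)=\emptyset$ (your three-case row comparison with $\nu_i=\min(\lambda_i,\mu_i)$ is correct), whereas the paper simply asserts this fact and uses it as the crux of its contradiction. Second, where the paper passes to reduced expressions of $W(\lambda/\lambda\cap\mu,t)$ and $W(\mu/\lambda\cap\mu,t)$, equates their denominators to force a common linear factor, and only then invokes \cref{7.12} to extract the pairing, you clear denominators and compare multiplicities of the root $T=tq^{2i_0}$ in the resulting polynomial identity; since $t^2=q^{4a}$ and $q$ is not a root of unity, only the $i=i_0$ factor of each numerator and the $j=-2a-i_0$ factor of each denominator can vanish there, so the multiplicity equation together with disjointness immediately forces $-2a-i_0\in D(\lambda/\lambda\cap\mu)$ and $m_{\lambda/\lambda\cap\mu}(i_0)=m_{\lambda/\lambda\cap\mu}(-2a-i_0)$. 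This buys you the pairing statement for each skew shape directly, without the intermediate reduced-form bookkeeping and without routing through \cref{7.12} in the forward direction, and it even recovers the equal-multiplicity refinement as a by-product; the self-paired case $i_0=-a$ is handled automatically by your argument. The easy direction is the same as the paper's (both rely on the ``if'' direction of \cref{7.12} applied to $\lambda\cap\mu\subseteq\lambda$ and $\lambda\cap\mu\subseteq\mu$).
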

    \begin{proof}
        If all such elements are $(q,t)$-paired, then $W(\lambda/\lambda\cap\mu,t)=1=W(\mu/\lambda\cap\mu,t)$, so that
        \[
        W(\lambda,t)=W(\lambda\cap\mu,t)=W(\mu,t).
        \]
        Conversely, suppose $W(\lambda,t)=W(\mu,t)$. Then by decomposing according to \cref{7.11}, we have that $W(\lambda/\lambda\cap\mu,t)=W(\mu/\lambda\cap\mu,t)$. We need to show that both are equal to $1$. Towards that, assume that $W(\lambda/\lambda\cap\mu,t)\neq 1$. This implies the existence of $i\in D(\lambda/\lambda\cap\mu)$ such that $m_{\lambda/\lambda\cap\mu}(i)\neq 0$. By simplifying $W(\lambda/\lambda\cap\mu,t)$ to get a reduced expression by separating terms that are paired or not, we get that the reduced expression for $W(\lambda/\lambda\cap\mu,t)$ is equal to
        \[
        W(\lambda/\lambda\cap\mu,t)=A\cdot B,
        \]
        where 
        \begin{equation*}
            A=\prod\limits_{i \notin P(\lambda/\lambda\cap\mu)}\frac{(1-t^{-1}q^{-2i}T)^{m_{\lambda/\lambda\cap\mu}(i)}}{(1-tq^{2i}T)^{m_{\lambda/\lambda\cap\mu}(i)}}
        \end{equation*}
        and 
        \begin{equation*}
            B=\prod\limits_{\substack{(i,j)\in D(\lambda/\lambda\cap\mu),\\{c_{\lambda/\lambda\cap\mu}(i)c_{\lambda/\lambda\cap\mu}(j)=1,} \\m_{\lambda/\lambda\cap\mu}(j)<m_{\lambda/\lambda\cap\mu}(i)}}\frac{(1-t^{-1}q^{-2i}T)^{m_{\lambda/\lambda\cap\mu}(i)-m_{\lambda/\lambda\cap\mu}(j)}}{(1-tq^{2i}T)^{m_{\lambda/\lambda\cap\mu}(i)-m_{\lambda/\lambda\cap\mu}(j)}}.
        \end{equation*}
        Using $W(\lambda/\lambda\cap\mu,t)=W(\mu/\lambda\cap\mu,t)$, with both expressions reduced and unequal to $1$, we can equate the denominators. By knowing that there exists $i\in D(\lambda/\lambda\cap\mu)$, we would have that $1-tq^{2i}T$ divides the denominator of $W(\mu/\lambda\cap\mu,t)$ and thus should be equal to one of the irreducible factors appearing there. But since $q$ is not a root of unity, this would mean that $i\in D(\mu/\lambda\cap\mu)$. This is impossible, since 
        \[
        D(\lambda/\lambda\cap\mu)\cap D(\mu/\lambda\cap\mu)=\emptyset.
        \]
        Hence, $W(\lambda/\lambda\cap\mu,t)=1=W(\mu/\lambda\cap\mu,t)$, meaning that \[
        W(\lambda,t)=W(\lambda\cap\mu,t)=W(\mu,t).
        \]
         Since $\lambda\cap\mu\subset \mu,\lambda$, \cref{7.12} completes the proof.
    \end{proof}
    The following result follows immediately from the previous lemmas.
    \begin{thm}
        For $q\in\C$ generic, $t=q^{2a}$ with $a$ such that $B_n(q,t)$ is not semisimple and $(\lambda,f),(\mu,l)\in \Lambda_n$ it holds  $(\lambda,f)\overset{b}\sim(\mu,l)$, if and only if $W(\lambda,t)=W(\mu,t)$.
    \end{thm}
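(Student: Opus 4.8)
The plan is to deduce the statement by stitching together the Rui--Si block criterion with the chain of lemmas on the generating functions $W(\lambda,t)$ just established. Since $q$ is generic it is not a root of unity, so $o(q^2)=\infty>n$ and \cref{theorem for blocks} is available: for $(\lambda,f),(\mu,l)\in\Lambda_n$ one has $(\lambda,f)\overset{b}\sim(\mu,l)$ precisely when, writing $\nu:=\lambda\cap\mu$, the partition $\lambda$ is $(l_1,\nu)$-admissible and $\mu$ is $(l_2,\nu)$-admissible in the sense of \cref{defofblocks,admissibility condition}, where $2l_1=\lvert\lambda\rvert-\lvert\nu\rvert$ and $2l_2=\lvert\mu\rvert-\lvert\nu\rvert$.

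First I would simplify the admissibility condition in this setting. Condition (1) of \cref{defofblocks} (that $\nu$ is contained in $\lambda$ and in $\mu$) holds by construction of $\nu$. The key---and essentially the only---observation is that conditions (3) and (4) of \cref{defofblocks} are vacuous when $t=\pm q^{2a}$: they are triggered only by a diagonal $i\in D(\lambda/\nu)$ whose $(q,t)$-content $c_{\lambda/\nu}(i)=tq^{2i}=\pm q^{2(a+i)}$ equals $q$ or $-q^{-1}$, and either possibility forces $q^m=\pm1$ for an odd integer $m$, which is impossible since $q$ is not a root of unity. Therefore $\lambda$ is $(l_1,\nu)$-admissible exactly when condition (2) holds for the skew shape $\lambda/\nu$---every diagonal of $\lambda/\nu$ is $(q,t)$-paired and paired diagonals carry equal multiplicities---and likewise for $\mu/\nu$. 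Hence $(\lambda,f)\overset{b}\sim(\mu,l)$ is equivalent to: all diagonals of $\lambda/\nu$ and of $\mu/\nu$ are $(q,t)$-paired, with matching multiplicities within each pair.

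It then remains to recognise this as the equality $W(\lambda,t)=W(\mu,t)$. By \cref{7.11}, $W(\lambda,t)=W(\nu,t)\,W(\lambda/\nu,t)$ and $W(\mu,t)=W(\nu,t)\,W(\mu/\nu,t)$; the lemma proved just before the theorem then gives that $W(\lambda,t)=W(\mu,t)$ if and only if every diagonal of $\lambda/\nu$ and of $\mu/\nu$ is $(q,t)$-paired, and applying \cref{7.12} to $\nu\subseteq\lambda$ and to $\nu\subseteq\mu$ (or inspecting its proof) shows that such paired diagonals automatically have equal multiplicities, since equality of the reduced expressions $\prod_i(1-(tq^{2i})^{-1}T)^{m_{\lambda/\nu}(i)}=\prod_i(1-tq^{2i}T)^{m_{\lambda/\nu}(i)}$ imposes a multiplicity-preserving matching of roots (the scalars $tq^{2i}$ being pairwise distinct as $q$ is not a root of unity). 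Comparing with the reformulation from the second step completes the proof; together with the earlier fact that $WL[x_1,\ldots,x_n]$ acts on $L(\lambda)$ through the generating function $W(\lambda,t)$, this shows in particular that $WL[x_1,\ldots,x_n]$ is large enough to separate the blocks of $B_n(q,t)$. I do not anticipate a genuine obstacle here: the only delicate point is the vanishing of conditions (3)--(4), i.e.\ the parity and root-of-unity bookkeeping carried out in the second step.
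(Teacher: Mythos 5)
Your proposal is correct and follows essentially the same route as the paper: it invokes the Rui--Si combinatorial block description, observes that conditions (3)--(4) of \cref{defofblocks} cannot occur for $t=q^{2a}$ with $q$ not a root of unity, and then matches condition (2) with $W(\lambda,t)=W(\mu,t)$ via \cref{7.11}, \cref{7.12} and the lemma immediately preceding the theorem. The only difference is cosmetic: you spell out the root-of-unity argument for the vacuity of (3)--(4), which the paper only records as a remark.
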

    \section{Description of primitive idempotents}
    \par In this section we give a description of the primitive idempotents in $B_n(q,t)$ in terms of the JM elements and the branching graph, for $q,t\in\C$ generic.
    \par Let $L(\lambda)$ be the simple corresponding to the partition $\lambda\in \Lambda_n$. By the Artin-Wedderburn theorem, it holds that 
    \begin{equation*}
        B_n(q,t)\overset{\varphi}{\cong} \bigoplus_{\lambda\in \Lambda_n}\End(L(\lambda))
    \end{equation*}
    as algebras over $\C$. Regarding $B_n(q,t)$ as a module over itself with the left regular action and $\End(L(\lambda))$ as a module over $B_n(q,t)$, the action being given by $(a.f)(x)=a.(f(x))$ makes $\varphi$ an isomorphism of representations.
    \par Since $L(\lambda)$ has a GZ basis $\{v_T \; | \; T\in T^{ud}(\lambda)\}$ which is completely determined by the corresponding eigenvalues of the family $x_1,\ldots,x_n$ acting on this basis, we easily get that 
    \[
    \prod_{T\in T^{ud}(\lambda)}\prod_{i=1}^n (x_i-c_T(i))=0
    \]
    as an operator acting on $L(\lambda)$. That is, following all the information of the branching graph up to $\lambda$ enables one to erase $L(\lambda)$ entirely from the Artin-Wedderburn decomposition of $B_n(q,t)$.
    \par This means that, if one wants to isolate the information of a specific $L(\mu)$ for $\mu\in\Lambda_n$, one can consider the element
    \begin{equation*}
        a_{\mu}=\prod_{{\mu\neq\lambda\in\Lambda_n}} \; \prod_{T\in T^{ud}(\lambda)} \;\prod_{i=1}^n (x_i-c_T(i))
    \end{equation*}
    and see immediately that for any $\lambda\neq \mu$, the action of $a_{\mu}$ on $L(\lambda)$ is zero, so that
    \[
    B_n(q,t)a_{\mu}\hookrightarrow \End(L(\mu)).
    \]
    \par The above formulas are highly ineffective and they still have not distinguished a summand of $\End(L(\mu))$ in its decomposition as a $B_n(q,t)$ module, $\End(L(\mu))=L(\mu)^{\oplus \dim L(\mu)}$. Thus, each of the paths to $\mu$ in the branching graph is able to produce a primitive idempotent corresponding to each one of the summands of $\End(L(\mu))$.
    \subsection{The primitive idempotents.}
    \par One can work inductively and fix the approach described in the previous subsection to produce a formula for the primitive idempotent $e_{\lambda}$ corresponding to the simple $L(\lambda)$ as follows.
    \par Pick $\lambda\in \Lambda_n$ and let $T^{d}(\lambda)=(\lambda_1=\emptyset\rightarrow \lambda_2\rightarrow \ldots \rightarrow \lambda_n=\lambda)$ be the drunk path to $\lambda$ and $\mu=\lambda_{n-1}$. Suppose $e_{\mu,n-1}$ is the primitive idempotent corresponding to $e_{\mu,n-1}$ with respect to the drunk path to $\mu$. Notice that the drunk path to $\mu$ is equal to $T^{d}(\lambda)$ when we truncate the last step. Now $e_{\mu,n-1}$ contains all the information of the branching graph up to $\mu$, hence the only information we need is the $x_n$ action on the GZ basis. Since we want to distinguish a summand, we "add" to $e_{\mu,n-1}$ the $x_n$ information corresponding to all $\lambda\neq\nu\in \Lambda_n$ and all of their paths, together with the information of all paths to $\lambda$ with the exception of the drunk path $T^{d}(\lambda)$. In other words, we conside the element 
    \begin{equation*}
       A= e_{\mu,n-1}\cdot \left( \prod_{\lambda\neq\nu\in{\Lambda_n}} \prod_{T\in T^{ud}(\nu)} \left(\frac{x_n-c_T(n)}{c_{T^{d}(\lambda)}(n)-c_T(n)}\right)\right)\prod_{\substack{S\in T^{ud}(\lambda), \\ S\neq T^{d}(\lambda)}}\left(\frac{x_n-c_S(n)}{c_{T^{d}(\lambda)}(n)-c_S(n)}\right).
    \end{equation*}
    Acting with $A$ on any GZ-basis element of the simple module $L(\nu)$ is equal to zero for any $\nu\neq \lambda$, since if $v_T$ is the basis element corresponding to $T\in T^{ud}(\nu)$, then $A$ has a factor $x_n-c_T(n)$, hence
    \[
    A.v_T=0.
    \]
    \par Now for the action on $L(\lambda)$. Order the paths in $T^{ud}(\lambda)$ in any way such that $T^{d}(\lambda)<T$ for any $T\neq T^{d}(\lambda)$ to get an ordered basis of $L(\lambda)$. If $T\neq T^{d}(\lambda)$, then $A$ again contains a factor 
    \[
    x_n-c_T(n),
    \]
    and thus $A.v_T=0$. The only action that matters is the one on $v_{T^{d}(\lambda)}$, for which the factors of $A$ act as follows. Since $e_{\mu,n-1}$ is the primitive idempotent corresponding to $\mu$ and the drunk path to $\mu$ is the path corresponding to the truncation of the last step in $T^{d}(\lambda)$, we have that 
    \[
    e_{\mu,n-1}v_{T^{d}(\lambda)}=v_{T^{d}(\lambda)}.
    \]
    Since all the other terms clearly evaluate to $1$, we have that
    \[
    A. v_{T^{d}(\lambda)}=v_{T^{d}(\lambda)}.
    \]
    This means that the image of $A$ under the Artin-Wedderburn isomorphism is equal to the block matrix, with block corresponding to the summand $\End(L(\lambda))$ equal to the elementary matrix $E_{11}$. Since $E_{11}$ is a primitive idempotent of $\End(L(\lambda))$ projecting to $L(\lambda)$, we get that 
    \begin{equation*}
        B_n(q,t)A\cong L(\lambda)
    \end{equation*}
    as $B_n(q,t)$ representations, where $B_n(q,t)A$ is the left ideal generated by $A$. Hence, we can state
    \begin{thm}
        Let $\lambda\in \Lambda_n, \; T^{d}(\lambda)$ be the drunk path to $\lambda$ and $\mu$ be the partition corresponding to the $n-1$ step in the path $T^{d}(\lambda)$. If $e_{\mu,n-1}$ is the primitive idempotent corresponding to $L(\mu)$ with respect to $T^{d}(\mu)$, then
        \begin{equation*}
            e_{\lambda,n}=e_{\mu,n-1}\cdot \left( \prod_{\lambda\neq\nu\in{\Lambda_n}} \prod_{T\in T^{ud}(\nu)} \left(\frac{x_n-c_T(n)}{c_{T^{d}(\lambda)}(n)-c_T(n)}\right)\right)\prod_{\substack{S\in T^{ud}(\lambda), \\ S\neq T^{d}(\lambda)}}\left(\frac{x_n-c_S(n)}{c_{T^{d}(\lambda)}(n)-c_S(n)}\right)
        \end{equation*}
        is the primitive idempotent corresponding to the simple $L(\lambda)$ with respect to $T^d(\lambda)$.
    \end{thm}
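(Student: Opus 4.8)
The plan is to prove the displayed formula by induction on $n$; the base case $n=1$ is immediate, since $B_1(q,t)=\C$, $\Lambda_1=\{((1),0)\}$ and $e_{(1),1}=1$. For the inductive step, fix $(\lambda,f)\in\Lambda_n$ with $n\ge2$, let $T^{d}(\lambda)$ be the drunk path, $p$ the box added at its last step (the canonically-last box of $\lambda$, added because the canonical part of a drunk path only adds boxes), $\mu=\lambda\setminus\{p\}=T^{d}(\lambda)_{n-1}$, and $c:=c_{T^{d}(\lambda)}(n)=tq^{2c(p)}$; the truncation of $T^{d}(\lambda)$ is precisely $T^{d}(\mu)$, so the inductive hypothesis applies to $e_{\mu,n-1}$, which I regard as an element of $B_n(q,t)$ via $B_{n-1}(q,t)\hookrightarrow B_n(q,t)$. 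The proof will use that $x_n$ commutes with $B_{n-1}(q,t)$, the Artin--Wedderburn isomorphism $\varphi\colon B_n(q,t)\xrightarrow{\ \sim\ }\bigoplus_{\nu\in\Lambda_n}\End(L(\nu))$, and the fact (\cref{JMactseminormal}, \cref{OVlikeapproachworks}) that $x_n$ is diagonalized on each $L(\nu)$ by the GZ basis $\{v_S:S\in T^{ud}(\nu)\}$ with $x_n v_S=c_S(n)v_S$, the $c_S(n)$ over all $\nu$ and $S$ exhausting the spectrum of $x_n$. The structural point is that the displayed element $A$ has the form $A=e_{\mu,n-1}\cdot P(x_n)$, where $P(x_n)$ is the product of the normalized linear factors in $x_n$; the two factors commute, and---reading those products so that only factors with nonzero denominator occur---$P(x_n)$ is the spectral idempotent of $x_n$ for the eigenvalue $c$.

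Granting this, I would compute $\varphi(A)$ block by block. On $L(\nu)$ with $\nu\neq\lambda$: applying $P(x_n)$ leaves only the $c$-eigenspace of $x_n$, which by \cref{branching} is the $B_{n-1}(q,t)$-summand $\cong L(\nu\setminus\{p'\})$ for a removable box $p'$ of $\nu$ of content $c(p)$ (and is $0$ if no such box exists); but $\nu\setminus\{p'\}=\mu=\lambda\setminus\{p\}$ would make $p$ and $p'$ two addable boxes of $\mu$ of equal content, hence $p=p'$ and $\nu=\lambda$, a contradiction, so $e_{\mu,n-1}$ kills what survives. Thus $\varphi(A)$ vanishes off the $\End(L(\lambda))$-block. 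On $L(\lambda)$: order $T^{ud}(\lambda)$ so that $T^{d}(\lambda)$ is minimal; $P(x_n)$ kills every $v_S$ with $c_S(n)\neq c$ (equivalently $S_{n-1}\neq\mu$) and acts as the identity on the $c$-eigenspace, which is exactly the $L(\mu)$-summand of $\Res^{B_n(q,t)}_{B_{n-1}(q,t)}L(\lambda)$ with $v_{T^{d}(\lambda)}$ corresponding to $v_{T^{d}(\mu)}$; by the inductive hypothesis $e_{\mu,n-1}$ then projects this summand onto $\C v_{T^{d}(\lambda)}$. Hence $\varphi(A)$ is the rank-one idempotent $E_{11}$ in the $\End(L(\lambda))$-block and $0$ elsewhere, so $A$ is a primitive idempotent with $B_n(q,t)A\cong L(\lambda)$ realized along $T^{d}(\lambda)$; therefore $A=e_{\lambda,n}$, closing the induction.

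The step I expect to be the main obstacle is exactly the one glossed over above: justifying that $P(x_n)$ is the spectral idempotent at $c$, i.e. that every denominator $c_{T^{d}(\lambda)}(n)-c_T(n)$ occurring in $A$ is nonzero and that the surviving product of linear factors vanishes at each spectral value $c'\neq c$ of $x_n$ and equals $1$ at $c$. This is a $(q,t)$-content computation: $c_S(n)=c$ precisely when the last step of $S$ adds a box of content $c(p)$, so the offending coincidences occur along predictable paths and the corresponding factors must be dropped, which is harmless because $e_{\mu,n-1}$ takes over on the $c$-eigenspace as above; and since $tq^{2i}$ determines $i$ and $tq^{2i}\neq t^{-1}q^{-2j}$ (genericity, $q$ not a root of unity), all the $(q,t)$-contents that need to be separated are separated. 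The bookkeeping needed to pin down which factors appear, together with the choice of path order on $T^{ud}(\lambda)$ making $T^{d}(\lambda)$ minimal---so that $e_{\mu,n-1}$ is already triangular with respect to the ensuing GZ basis---is what upgrades ``$\varphi(A)$ is a rank-one idempotent in the right block'' to ``$\varphi(A)=E_{11}$ along $T^{d}(\lambda)$''.
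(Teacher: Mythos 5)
Your proposal is correct and follows essentially the same route as the paper's proof: the same inductive element $A=e_{\mu,n-1}\cdot P(x_n)$, analysed block by block on the Gelfand--Zeitlin bases through the Artin--Wedderburn isomorphism, using that $x_n$ commutes with $B_{n-1}(q,t)$ and acts on $v_S$ by $c_S(n)$. The care you take at the point you flag is exactly what is needed: whenever $c_T(n)=c_{T^{d}(\lambda)}(n)$ (which does happen, e.g.\ for $n=4$, $\lambda=(2,2)$ and the non-drunk standard tableau of that shape) the displayed denominators vanish, so the product must be read as the spectral projection of $x_n$ at $c=c_{T^{d}(\lambda)}(n)$ --- keeping only factors with nonzero denominator --- with $e_{\mu,n-1}$ annihilating the surviving $c$-eigenvectors off the drunk path; the paper's assertion that ``all the other terms clearly evaluate to $1$'' silently assumes such coincidences never occur. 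The one boundary case to patch in your write-up is $\lambda=\emptyset$ (so $n=2f$): there the last step of $T^{d}(\lambda)$ removes a box, so $c=t^{-1}$ rather than $tq^{2c(p)}$, and your uniqueness-of-paired-box argument should be run with ``addable'' and ``removable'' interchanged; genericity still separates $t^{-1}$ from every $tq^{2i}$ and from $t^{-1}q^{-2j}$ with $j\neq 0$, so the rest of your argument goes through unchanged.
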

    Setting $e_{\emptyset,0}=1$, the above statement gives a description of a complete set of primitive idempotents in $B_n(q,t)$, in terms of the JM elements in the generic case.
    \subsection*{Acknowledgements}
    I would like to express my gratitude to my supervisor, Dr. Thorsten Heidersdorf, for the valuable discussions and for proofreading this manuscript. His feedback and suggestions were essential in making this version available.
\nocite{ResTur}
\nocite{Hu}
\nocite{Tur}
\nocite{GH}
\nocite{BBidem}
\nocite{EnyangGoodmanMurphybasis}
\bibliographystyle{amsrefs}
\bibliography{ref}

@article{Tur,
  author  = {Turaev, Vladimir G.},
  title   = {Operator invariants of tangles, and {$R$}-matrices},
  journal = {Math. USSR-Izv.},
  volume  = {35},
  year    = {1990},
  number  = {2},
  pages   = {411--?}
}

@misc{Mor,
  author = {Morton, Hugh R.},
  title  = {A basis for the {B}irman--{W}enzl algebra},
  year   = {2010},
  note   = {arXiv:1012.3116 [math.QA]}
}

@article{GH,
  author  = {Goodman, Frederick M. and Hauschild, Holly},
  title   = {Affine {B}irman--{W}enzl--{M}urakami algebras and tangles in the solid torus},
  journal = {Fund. Math.},
  volume  = {190},
  year    = {2006},
  pages   = {77--137}
}

@article{OV,
  author  = {Okounkov, Andrei and Vershik, Anatoly},
  title   = {A new approach to the representation theory of symmetric groups},
  journal = {Selecta Math. (N.S.)},
  volume  = {2},
  year    = {1996},
  number  = {4},
  pages   = {581--605}
}

@article{RS,
  author  = {Rui, Hebing and Song, Linliang},
  title   = {Decomposition matrices of {B}irman--{M}urakami--{W}enzl algebras},
  journal = {J. Algebra},
  volume  = {444},
  year    = {2015},
  pages   = {246--271}
}

@article{RSi2,
  author  = {Rui, Hebing and Si, Mei},
  title   = {Blocks of {B}irman--{M}urakami--{W}enzl algebras},
  journal = {Int. Math. Res. Not. IMRN},
  year    = {2011},
  number  = {2},
  pages   = {452--486}
}

@article{RSi1,
  author  = {Rui, Hebing and Si, Mei},
  title   = {Gram determinants and semisimplicity criteria for {B}irman--{W}enzl algebras},
  journal = {J. Reine Angew. Math.},
  volume  = {631},
  year    = {2009},
  pages   = {153--179}
}

@article{DRV,
  author  = {Daugherty, Zajj and Ram, Arun and Virk, Rahbar},
  title   = {Affine and degenerate affine {BMW} algebras: the center},
  journal = {Osaka J. Math.},
  volume  = {51},
  year    = {2014},
  number  = {1},
  pages   = {257--283}
}

@article{CRE,
  author  = {Creedon, Samuel},
  title   = {The center of the partition algebra},
  journal = {J. Algebra},
  volume  = {570},
  year    = {2021},
  pages   = {215--266}
}

@article{WEN,
  author  = {Wenzl, Hans},
  title   = {Quantum groups and subfactors of type {$B$}, {$C$}, and {$D$}},
  journal = {Comm. Math. Phys.},
  volume  = {133},
  year    = {1990},
  number  = {2},
  pages   = {383--432}
}

@article{MYSA,
  author  = {Mousaaid, Youssef and Savage, Alistair},
  title   = {Affinization of monoidal categories},
  journal = {J. \'Ec. polytech. Math.},
  volume  = {8},
  year    = {2021},
  pages   = {791--829}
}

@article{Naz,
  author  = {Nazarov, Maxim},
  title   = {Young's orthogonal form for {B}rauer's centralizer algebra},
  journal = {J. Algebra},
  volume  = {182},
  year    = {1996},
  number  = {3},
  pages   = {664--693}
}

@article{Hu,
  author  = {Hu, Jun},
  title   = {{BMW} algebra, quantized coordinate algebra and type {$C$} {S}chur--{W}eyl duality},
  journal = {Represent. Theory},
  volume  = {15},
  year    = {2011},
  pages   = {1--62}
}

@article{Xi,
  author  = {Xi, Changchang},
  title   = {On the quasi-heredity of {B}irman--{W}enzl algebras},
  journal = {Adv. Math.},
  volume  = {154},
  year    = {2000},
  number  = {2},
  pages   = {280--298}
}

@article{GL,
  author  = {Graham, J. J. and Lehrer, G. I.},
  title   = {Cellular algebras},
  journal = {Invent. Math.},
  volume  = {123},
  year    = {1996},
  number  = {1},
  pages   = {1--34}
}

@article{ResTur,
  author  = {Reshetikhin, N. and Turaev, V. G.},
  title   = {Invariants of {$3$}-manifolds via link polynomials and quantum groups},
  journal = {Invent. Math.},
  volume  = {103},
  year    = {1991},
  number  = {3},
  pages   = {547--597}
}

@article{Brauer,
  author  = {Brauer, Richard},
  title   = {On algebras which are connected with the semisimple continuous groups},
  journal = {Ann. of Math. (2)},
  volume  = {38},
  year    = {1937},
  number  = {4},
  pages   = {857--872}
}

@article{JungKim,
  author  = {Jung, Ji Hye and Kim, Myungho},
  title   = {Supersymmetric polynomials and the center of the walled {B}rauer algebra},
  journal = {Algebr. Represent. Theory},
  volume  = {23},
  year    = {2020},
  number  = {5},
  pages   = {1945--1975}
}

@article{EnyangGoodmanMurphybasis,
  author  = {Enyang, John and Goodman, Frederick M.},
  title   = {Cellular bases for algebras with a {J}ones basic construction},
  journal = {Algebr. Represent. Theory},
  volume  = {20},
  year    = {2017},
  number  = {1},
  pages   = {71--121}
}

@article{BirmanWenzl,
  author  = {Birman, Joan S. and Wenzl, Hans},
  title   = {Braids, link polynomials and a new algebra},
  journal = {Trans. Amer. Math. Soc.},
  volume  = {313},
  year    = {1989},
  number  = {1},
  pages   = {249--273}
}

@article{EnyangMurphyBasisold,
  author  = {Enyang, John},
  title   = {Cellular bases for the {B}rauer and {B}irman--{M}urakami--{W}enzl algebras},
  journal = {J. Algebra},
  volume  = {281},
  year    = {2004},
  number  = {2},
  pages   = {413--449}
}

@article{sspolys,
  author  = {Stembridge, John R.},
  title   = {A characterization of supersymmetric polynomials},
  journal = {J. Algebra},
  volume  = {95},
  year    = {1985},
  number  = {2},
  pages   = {439--444}
}

@article{JucysCenterCSn,
  author  = {Jucys, A.-A. A.},
  title   = {Symmetric polynomials and the center of the symmetric group ring},
  journal = {Rep. Math. Phys.},
  volume  = {5},
  year    = {1974},
  number  = {1},
  pages   = {107--112}
}

@article{HaRamPartition,
  author  = {Halverson, Tom and Ram, Arun},
  title   = {Partition algebras},
  journal = {European J. Combin.},
  volume  = {26},
  year    = {2005},
  number  = {6},
  pages   = {869--921}
}

@article{MR927059,
  author  = {Murakami, Jun},
  title   = {The {K}auffman polynomial of links and representation theory},
  journal = {Osaka J. Math.},
  volume  = {24},
  year    = {1987},
  number  = {4},
  pages   = {745--758}
}

@book{MR255,
  author    = {Weyl, Hermann},
  title     = {The {C}lassical {G}roups. Their {I}nvariants and {R}epresentations},
  publisher = {Princeton University Press},
  address   = {Princeton, NJ},
  year      = {1939}
}

@article{BGCMHT,
  author  = {Benkart, Georgia and Chakrabarti, Manish and Halverson, Thomas and Leduc, Robert and Lee, Chanyoung and Stroomer, Jeffrey},
  title   = {Tensor product representations of general linear groups and their connections with {B}rauer algebras},
  journal = {J. Algebra},
  volume  = {166},
  year    = {1994},
  number  = {3},
  pages   = {529--567}
}

@article{MR1265453,
  author  = {Martin, Paul},
  title   = {Temperley--{L}ieb algebras for nonplanar statistical mechanics---the partition algebra construction},
  journal = {J. Knot Theory Ramifications},
  volume  = {3},
  year    = {1994},
  number  = {1},
  pages   = {51--82}
}

@article{LZ,
  author  = {Lehrer, G. I. and Zhang, R. B.},
  title   = {The {B}rauer category and invariant theory},
  journal = {J. Eur. Math. Soc. (JEMS)},
  volume  = {17},
  year    = {2015},
  number  = {9},
  pages   = {2311--2351}
}

@misc{chavli2024centerwalledbraueralgebra,
  author = {Chavli, Eirini and De Visscher, Maud and Parker, Alison and Salmon, Sarah and Wilson, Ulrica},
  title  = {The center of the walled {B}rauer algebra {$B_{r,1}(\delta)$}},
  year   = {2024},
  note   = {arXiv:2412.14716 [math.RT]}
}

@article{IsaevOgievetsky,
  author  = {Isaev, A. P. and Ogievetsky, O. V.},
  title   = {Jucys--{M}urphy elements for {B}irman--{M}urakami--{W}enzl algebras},
  journal = {Phys. Part. Nucl. Lett.},
  volume  = {8},
  year    = {2011},
  number  = {3},
  pages   = {234--243}
}

@article {BBidem,
    AUTHOR = {Beliakova, Anna and Blanchet, Christian},
     TITLE = {Skein construction of idempotents in
              {B}irman-{M}urakami-{W}enzl algebras},
   JOURNAL = {Math. Ann.},
  FJOURNAL = {Mathematische Annalen},
    VOLUME = {321},
      YEAR = {2001},
    NUMBER = {2},
     PAGES = {347--373},
      ISSN = {0025-5831,1432-1807},
   MRCLASS = {57M27 (16S99 46L65)},
  MRNUMBER = {1866492},
MRREVIEWER = {Richard\ M.\ Green},
       DOI = {10.1007/s002080100233},
       URL = {https://doi.org/10.1007/s002080100233},
}


\end{document}